\documentclass[10pt]{amsart}
\calclayout
\usepackage{amsmath,amssymb,bbm}
\usepackage{enumerate}
\usepackage{color}
\usepackage{hyperref}
\usepackage[numbers]{natbib}
\usepackage{graphicx}
\usepackage{dsfont}
\usepackage{comment}
\usepackage{enumitem}
\usepackage{algorithm}

\usepackage{tikz}
\usetikzlibrary{arrows.meta,positioning}

\newtheorem{theorem}{Theorem}[section]
\newtheorem{proposition}[theorem]{Proposition}
\newtheorem{lemma}[theorem]{Lemma}
\newtheorem{corollary}[theorem]{Corollary}
\newtheorem{definition}[theorem]{Definition}
\newtheorem{remark}[theorem]{Remark}
\newtheorem{example}[theorem]{Example}

\newcommand{\E}{\mathbb{E}}

\newcommand{\F}{\mathbb{F}}
\newcommand{\N}{\mathbb{N}}

\newcommand{\R}{\mathbb{R}}
\renewcommand{\P}{\mathbb{P}}

\newcommand{\UU}{\bold{U}}

\newcommand{\XX}{\mathbf{X}}
\newcommand{\YY}{\mathbf{Y}}

\newcommand{\cP}{\mathcal{P}}

\newcommand{\cC}{\mathcal{C}}

\newcommand{\cE}{\mathcal{E}}

\newcommand{\sR}{\mathsf{R}}

\newcommand{\cU}{\mathcal{U}}

\newcommand{\cW}{\mathcal{W}}
\newcommand{\cAW}{\mathcal{AW}}
\newcommand{\cIW}{\mathcal{CW}}
\newcommand{\cX}{\mathcal{X}}
\newcommand{\cY}{\mathcal{Y}}
\newcommand{\dW}{\overrightarrow{\mathcal{W}}}

\providecommand{\keywords}[1]{\textbf{Keywords } #1}
\newcommand{\eqd}{\stackrel{\mathrm{d}}=}

\newcommand{\1}{\mathbf{1}}

\newcommand{\de}{\mathrm{d}}

\newcommand{\rank}{\mathsf{rank}}

\newcommand{\JA}[1]{{\color{blue} #1}}

\title[Dependence Measures via Adapted Optimal Transport]{Dependence Measures via Adapted Optimal Transport: Stability and Rates of Convergence}
\author{Jonathan Ansari}
\address{Jonathan Ansari\newline
\hbox{}\hspace{0.33cm} University of Salzburg \newline
\hbox{}\hspace{0.33cm} Department of Mathematics}
\email{jonathan.ansari@plus.ac.at}

\author{Johannes Wiesel}
\address{Johannes Wiesel\newline
\hbox{}\hspace{0.33cm} University of Copenhagen \newline
\hbox{}\hspace{0.33cm} Department of Mathematics}
\email{wiesel@math.ku.dk}
\date{\today}

\begin{document}

\begin{abstract}
    Recently studied dependence measures, such as Chatterjee's rank correlation, that characterize both independence and perfect functional dependence, provide a powerful framework for detecting nonlinear dependencies. However, these measures cannot be weakly continuous, which limits the applicability of classical plug-in estimators based on empirical distributions. This obstruction is natural, as such measures are defined via conditional distributions and not through their joint law alone.\\
    In this paper, we introduce an optimal transport-based mode of convergence that captures weak convergence of conditional distributions and restores continuity for a broad class of dependence measures. We relate this mode of convergence to the adapted Wasserstein distance, the Knothe-Rosenblatt distance and the $d_1$-metric on copulas. Building on this perspective, we propose a copula estimator based on the adapted empirical measure and compare it with the classical rank-based checkerboard estimator. For both estimators, we derive \(O(N^{-1/3})\)-rates of convergence with respect to metrics that capture conditional weak continuity. As a consequence,  we obtain the same rates for plug-in estimators of several classes of dependence measures, including rank-based and rearranged dependence measures.
\end{abstract}

\keywords{Adapted empirical measure, Chatterjee's rank correlation, checkerboard copula, adapted Wasserstein distance, conditional weak convergence, kernel partial correlation, Knothe--Rosenblatt rearrangement, optimal transport, rearranged dependence measure, stochastically increasing, Wasserstein correlation}

\maketitle

\section{Introduction and main results}

Quantifying dependence between random variables is a fundamental problem in statistics, with implications ranging from theoretical probability to data-driven inference. In recent years, the theory of dependence measures has seen significant progress, with increasing attention given to measures that capture complex, non-monotone relationships and remain meaningful beyond the classical linear or monotone setting; see
\cite{chatterjee2023, catalano2025measures, Figalli-2025} for an overview.

Literature has focused on dependence measures \(\kappa\) on the space of probability measures \(\mathcal{P}(\cX\times \cY)\) that satisfy the following axioms, assuming \(\cX\) and \(\cY\) are Polish spaces:
\begin{enumerate}[label = (\Roman*)]
    \item \label{axiom1} \(\kappa(\pi) \in [0,1]\) for all \(\pi\in \mathcal{P}(\cX\times \cY)\),
    \item \label{axiom2} \emph{Zero-independence:} \(\kappa(\pi) = 0\) if and only if \(\pi = \pi_1 \otimes \pi_2\),
    \item \label{axiom3} \emph{Max-functionality:} \(\kappa(\pi) = 1\) if and only if \(\pi=(x, f(x))_{\#}\pi_1\) for some measurable function \(f\colon \cX \to \cY\).
\end{enumerate}
In the language of random variables \((X,Y)\colon (\Omega,\F,\P) \to (\cX\times\cY)\) with distribution \(\pi\), the value \(\kappa(\pi) = 0\) characterizes independence of \(X\) and \(Y\). Conversely, \(\kappa(\pi) = 1\) corresponds to \emph{perfect dependence} of \(Y\) on \(X\), i.e.~there exists a measurable function \(f\) such that \(Y = f(X)\) almost surely. Note that \(f\) is not assumed to be monotone. In the above, we implicitly assume that $\pi_2 = Y_\#\mathbb{P}$ is non-degenerate, thereby 
ruling out the pathological case where $Y$ is both independent of and 
perfectly dependent on $X$.

As pointed out by \citet[Corollary 1.1]{Buecher-2024}, a dependence measure \(\kappa\) that fulfills Axioms \ref{axiom2} and \ref{axiom3} cannot satisfy
\begin{enumerate}[label = (\Roman*)]\setcounter{enumi}{3}
    \item \emph{Weak continuity:} %If \((X_n,Y_n)\) is a sequence of random vectors weakly converging to \((X,Y)\), then \(\lim_{n\to \infty}\kappa(X_n,Y_n) = \kappa(X,Y)\).
    If \((\pi^n)\) is a sequence of distributions weakly converging to \(\pi\), then \(\lim_{n\to \infty}\kappa(\pi^n) = \kappa(\pi)\).
\end{enumerate}
The lack of weak continuity has several drawbacks. First, the empirical distribution function cannot serve as a basis for a consistent plug-in estimator. Second, dependence measures satisfying \ref{axiom2} and \ref{axiom3} lack robustness under slight model misspecifications with respect to the weak topology. Furthermore, independence tests based on these measures may suffer from trivial power against sequences of alternatives that converge weakly to the independent coupling.
Thus, to nevertheless establish robustness and convergence guarantees, a different notion of continuity is needed which accounts for weak convergence of conditional distributions. In this paper, we introduce such a notion of continuity for dependence measures and exploit this notion for stability results. Moreover, for the checkerboard estimator and an alternative copula estimator that we construct via the adapted empirical measure in \cite{Backhoff-Bartl-Beiglboeck-Wiesel-2022}, we determine rates of convergence that translate directly to dependence measures.

\subsection{A mode of convergence for dependence measures}

As a first main contribution, we provide a stronger mode of convergence that yields continuity results for dependence measures \(\kappa\) satisfying Axioms \ref{axiom1}--\ref{axiom3}. More precisely, 
we define the \emph{conditional Wasserstein} (pseudo-)distance
\begin{align}\label{eq:causal}
\mathcal{CW}(\pi,\tilde{\pi}) :=\inf_{\gamma\in \Pi(\pi_1, \tilde{\pi}_1)} \int \mathcal{W}(\pi_{x}, \tilde{\pi}_{\tilde x})\,\gamma(dx,d\tilde x)
\end{align} 
for probability measures $\pi, \tilde{\pi}\in \mathcal{P}(\cX\times \cY)$. Here, $\Pi(\pi_1, \tilde{\pi}_1)$ is the set of probability measures on $\mathcal{X}\times \mathcal{X}$ with marginals $\pi_1$ and $\tilde{\pi}_1$, and \(\mathcal{W}(\pi_{x}, \tilde{\pi}_{\tilde x})\) denotes the $1$-Wasserstein distance between the conditional laws \(\pi_{x}\) and \(\tilde{\pi}_{\tilde x}\) obtained via disintegration from \(\pi = \pi_1\otimes \pi_x\) and \(\tilde{\pi} = \tilde{\pi}_1\otimes \tilde{\pi}_{\tilde x}\) respectively; see \eqref{eq:disintegration}. Note that, in particular, the conditional Wasserstein distance is upper bounded by the adapted Wasserstein distance: 
\begin{align}\label{IW_leq_AW}
\mathcal{CW}(\pi,\tilde{\pi}) \le \mathcal{AW}(\pi, \tilde\pi):= \inf_{\gamma\in \Pi(\pi_1, \tilde{\pi}_1)} \int [ d(x,\tilde x)+\mathcal{W}(\pi_{x}, \tilde{\pi}_{\tilde x})]\,\gamma(dx,d\tilde x),
\end{align}
where \(d\) is a complete metric that metrizes the Polish space \(\cX\); see \eqref{eq:AW_simple}.
We show that many recently studied dependence measures that characterize independence and perfect dependence satisfy
\begin{enumerate}[label = (\Roman*')]\setcounter{enumi}{3}
    \item \emph{Conditional Wasserstein continuity: }%Adapted weak continuity \JA{how to call it? I think this is a better choice than conditional weak continuity}:} \textcolor{red}{Relaxed adapted weak continuity, integrated/iterated weak continuity? \JA{some more choices: iterated Wasserstein continuity, }} %If \((X_n,Y_n)\) is a sequence of random vectors converging to \((X,Y)\) with respect to \(\cIW\), then \(\lim_{n\to \infty}\kappa(X_n,Y_n) = \kappa(X,Y)\).
    If \((\pi^n)\) is a sequence of distributions converging to \(\pi\) with respect to \(\cIW\), then \(\lim_{n\to \infty}\kappa(\pi^n) = \kappa(\pi)\).
\end{enumerate}
Important examples are rank-based dependence measures \cite{chatterjee2021,chatterjee2020,gamboa2020}, Wasserstein correlations \cite{wiesel2022}, rearranged dependence measures \cite{strothmann2022}, kernel-based dependence measures \cite{deb2020,deb2020b}, and measures of sensitivity \cite{Ansari-LFT-2023}.
All these dependence measures quantify the strength of functional dependence of \(Y\) on \(X\) via the variability of conditional distributions.
A general class of functionals examined in \cite{Figalli-2025} satisfying Axioms \ref{axiom1}--\ref{axiom3} is of the form 
\begin{align}\label{kappa_figalli}
    \kappa(\pi) = \int \Psi(\pi_2,\pi_x)\, \pi_1(\de x),
\end{align}
where $\pi_2$ denotes the second marginal of $\pi.$
Under a Lipschitz condition on \(\Psi\), such functionals are conditionally Wasserstein continuous, as the following result shows.

\begin{proposition}\label{lem_meta_Lipschitz}
Consider the functional \(\kappa\) defined in \eqref{kappa_figalli}. Assume that \(\Psi\) satisfies the Lipschitz condition 
\begin{align}\label{Lipschitz_condition_Psi}
    |\Psi(\pi_2,\pi_x) - \Psi(\tilde \pi_2,\tilde{\pi}_{\tilde{x}})| \leq c\cdot\big[\cW(\pi_x,\tilde{\pi}_{\tilde{x}}) +\cW(\pi_2, \tilde \pi_2)\big] %\quad \text{for }\mu\text{-almost all } x,\tilde{x}\in \cX.
\end{align}
for all \(\pi,\tilde{\pi}\in \mathcal{P}(\cX\times \cY)\) and \(x,\tilde{x}\in \cX\).
Then \(|\kappa(\pi^n)-\kappa(\pi)| \leq 2c \cdot\cIW(\pi^n,\pi)\), in particular \(\cIW(\pi^n,\pi)\to 0\)  implies \(\kappa(\pi^n) \to \kappa(\pi)\).
\end{proposition}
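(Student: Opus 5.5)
Fix $\pi^n,\pi$ and take an arbitrary coupling $\gamma\in\Pi(\pi^n_1,\pi_1)$. Since $\kappa(\pi^n)=\int\Psi(\pi^n_2,\pi^n_x)\,\pi^n_1(\de x)$ and $\kappa(\pi)=\int\Psi(\pi_2,\pi_{\tilde x})\,\pi_1(\de\tilde x)$, and $\gamma$ has these first marginals, both integrals can be written against $\gamma$:
\begin{align*}
|\kappa(\pi^n)-\kappa(\pi)| = \Big|\int \big[\Psi(\pi^n_2,\pi^n_x)-\Psi(\pi_2,\pi_{\tilde x})\big]\,\gamma(\de x,\de\tilde x)\Big| \le c\int \cW(\pi^n_x,\pi_{\tilde x})\,\gamma(\de x,\de \tilde x) + c\,\cW(\pi^n_2,\pi_2),
\end{align*}
by \eqref{Lipschitz_condition_Psi}. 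Taking the infimum over $\gamma$ turns the first term into $c\,\cIW(\pi^n,\pi)$. It remains to show $\cW(\pi^n_2,\pi_2)\le \cIW(\pi^n,\pi)$, which gives the factor $2c$.

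For this, use the mixture representation $\pi^n_2=\int \pi^n_x\,\pi^n_1(\de x)=\int\pi^n_x\,\gamma(\de x,\de\tilde x)$ and $\pi_2=\int \pi_{\tilde x}\,\gamma(\de x,\de\tilde x)$. Using Kantorovich duality, for any $1$-Lipschitz $f$ on $\cY$,
\[
\int f\,\de\pi^n_2-\int f\,\de\pi_2=\int\Big[\int f\,\de\pi^n_x-\int f\,\de\pi_{\tilde x}\Big]\gamma(\de x,\de\tilde x)\le \int \cW(\pi^n_x,\pi_{\tilde x})\,\gamma(\de x,\de\tilde x).
\]
Taking the supremum over $f$ and then the infimum over $\gamma$ gives joint convexity of $\cW$ in this form. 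Alternatively, one can glue measurably selected optimal couplings of $\pi^n_x$ and $\pi_{\tilde x}$ into a coupling of $\pi^n_2$ and $\pi_2$, but duality avoids measurable-selection issues.

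The main technical point is measurability: $(x,\tilde x)\mapsto \Psi(\pi^n_2,\pi^n_x)$ and $\cW(\pi^n_x,\pi_{\tilde x})$ must be measurable, so that the integrals make sense. This follows because the kernels are measurable maps into $\cP(\cY)$ and $\cW$ is lower semicontinuous, as is implicit in the definition \eqref{eq:causal}. Integrability also needs care: we work with first moments finite, so that $\cW$ is finite. If $\cIW=\infty$ the bound is trivial. The limit statement then follows immediately.
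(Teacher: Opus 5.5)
Your proof is correct and follows the paper's argument. You bound the difference by $c\int \cW(\pi^n_x,\pi_{\tilde x})\,\gamma(\de x,\de\tilde x)+c\,\cW(\pi^n_2,\pi_2)$ for an arbitrary $\gamma\in\Pi(\pi^n_1,\pi_1)$, take the infimum, and absorb the marginal term via $\cW(\pi^n_2,\pi_2)\le\cIW(\pi^n,\pi)$ (Lemma \ref{lem:second}); the only difference is that you prove this last inequality by Kantorovich--Rubinstein duality, whereas the paper cites convexity of the Wasserstein distance, and both are valid.
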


\begin{proof}
    By Lipschitz continuity we have $$|\kappa(\pi^n) - \kappa(\pi)| \leq c \Big[\int \cW(\pi^n_x,\pi_{\tilde{x}})\, \gamma(\de x,\de \tilde{x})+\cW(\pi_2^n, \pi_2)\Big]$$ for all \(n\) and for all couplings \(\gamma\in \Pi(\pi_1^n,\pi_1)\). Taking the infimum over all \(\gamma\) and using Lemma \ref{lem:second} implies \(|\kappa(\pi^n)-\kappa(\pi)| \leq 2c \cdot\cIW(\pi^n,\pi)\).
\end{proof}

A class of dependence measures that intrinsically satisfy the Lipschitz condition \eqref{Lipschitz_condition_Psi} are the optimal transport-based Wasserstein correlations in \cite{wiesel2022}. To be precise, the Wasserstein correlation \(\dW(\pi)\) is defined for \(\pi\in \mathcal{P}(\mathcal{X}\times \mathcal{Y})\) via 
\begin{align*}
    \dW(\pi) := \frac{\int \cW(\pi_2,\pi_x) \, \pi_1(\de x)}{\int d(y,\tilde{y}) \,\pi_2(\de y) \pi_2(\de \tilde{y})}.
\end{align*}
By the triangle inequality for the Wasserstein distance, we have $$|\cW(\pi_2,\pi_x) - \cW(\tilde\pi_2,\tilde{\pi}_{\tilde{x}})| \leq \cW(\pi_x,\tilde{\pi}_{\tilde{x}}) +\mathcal{W}(\pi_2, \tilde\pi_2).$$ Hence, the following result is a direct consequence of Proposition \ref{lem_meta_Lipschitz}. Recall that $\pi_2$ is non-degenerate, which guarantees that the denominator of \(\dW(\pi)\) is strictly positive.

\begin{corollary}
    If \(\cIW(\pi^n,\pi)\to 0\), then \(\dW(\pi^n) \to \dW(\pi).\)
\end{corollary}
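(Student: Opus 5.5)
Since $\dW(\pi)$ is a ratio, I will treat the numerator and the denominator separately and then combine them.

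\emph{Numerator.} Write $N(\pi):=\int \cW(\pi_2,\pi_x)\,\pi_1(\de x)$. This is of the form \eqref{kappa_figalli} with $\Psi(\mu,\nu):=\cW(\mu,\nu)$. By the triangle inequality for $\cW$ displayed above, $\Psi$ satisfies the Lipschitz condition \eqref{Lipschitz_condition_Psi} with $c=1$. Proposition \ref{lem_meta_Lipschitz} then gives
\[
|N(\pi^n)-N(\pi)|\le 2\,\cIW(\pi^n,\pi)\to 0 .
\]
Proposition \ref{lem_meta_Lipschitz} does not require $\kappa$ to take values in $[0,1]$, so it applies to $N$ directly. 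For $N$ to be finite I implicitly assume first moments, which the paper's setting presumably imposes.

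\emph{Denominator.} Write $D(\pi):=\int d(y,\tilde y)\,\pi_2(\de y)\pi_2(\de\tilde y)$. In the proof of Proposition \ref{lem_meta_Lipschitz}, Lemma \ref{lem:second} was used to bound the marginal term, so it presumably gives $\cW(\pi^n_2,\pi_2)\le \cIW(\pi^n,\pi)$. Hence $\cW(\pi^n_2,\pi_2)\to0$. Now take an optimal coupling $\gamma$ of $(\pi_2^n,\pi_2)$ and a copy of it. By the triangle inequality for $d$,
\[
|D(\pi^n)-D(\pi)|\le \int \big[d(y,y')+d(\tilde y,\tilde y')\big]\,\gamma(\de y,\de y')\,\gamma(\de\tilde y,\de\tilde y') = 2\,\cW(\pi_2^n,\pi_2).
\]
Therefore $D(\pi^n)\to D(\pi)$.

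\emph{Conclusion.} Since $\pi_2$ is non-degenerate, $D(\pi)>0$. So $D(\pi^n)\ge D(\pi)/2>0$ for all large $n$, and continuity of $(a,b)\mapsto a/b$ on $\{b>0\}$ gives $\dW(\pi^n)\to\dW(\pi)$.

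The only point requiring care is the denominator. One must confirm that Lemma \ref{lem:second} indeed controls $\cW$ of the second marginals by $\cIW$, and handle the positivity of $D(\pi^n)$, which holds only eventually. The numerator is immediate from Proposition \ref{lem_meta_Lipschitz}.
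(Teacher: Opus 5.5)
Your proof is correct and follows the paper's route. The paper also gets convergence of the numerator from Proposition \ref{lem_meta_Lipschitz}, via the triangle inequality $|\cW(\pi_2,\pi_x)-\cW(\tilde\pi_2,\tilde\pi_{\tilde x})|\le \cW(\pi_x,\tilde\pi_{\tilde x})+\cW(\pi_2,\tilde\pi_2)$, and uses non-degeneracy of $\pi_2$ for positivity of the denominator; your explicit proof that the denominator converges (Lemma \ref{lem:second} plus $|D(\pi^n)-D(\pi)|\le 2\cW(\pi^n_2,\pi_2)$) fills in a step the paper leaves implicit when it calls the result a direct consequence of the proposition.
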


As a second example, take $\mathcal{X}=\mathcal{Y}=\mathcal{H}$, where $\mathcal{H}$ is an RKHS with bounded feature map $\phi:\R^d\to \mathcal{H}.$ \emph{Kernel partial correlation} \cite{deb2020b} is defined as
\begin{align*}
    \operatorname{KPC}(\pi) := \frac{\int \mathrm{MMD}^2(\pi_x, \pi_2)\,\pi_1(\de x)}{\int \mathrm{MMD}^2(\delta_y,\pi_2)\,\pi_2(\de y)},
\end{align*}
where 
\begin{align*}
    \mathrm{MMD}(\mu,\nu):= \sup_{\|\gamma\|_{\mathcal{H}} \le 1} \int \langle \gamma, x
    \rangle \,\mu(\de x)- \int \langle \gamma, x\rangle \,\nu(\de x)
\end{align*}
for $\mu,\nu\in \mathcal{P}(\mathcal{H})$. It can be shown that 
\begin{align*}
   \big| \mathrm{MMD}^2(\pi_x, \pi_2) - \mathrm{MMD}^2(\tilde \pi_{\tilde{x}}, \tilde \pi_2) \big| \le c \cdot [\mathcal{W}(\pi_x,\tilde \pi_{\tilde x}) + \mathcal{W}(\pi_2, \tilde \pi_2) \big]; 
\end{align*}
see Lemma \ref{lem: kpc}. We thus conclude the following.

\begin{corollary}
    If \(\cIW(\pi^n,\pi)\to 0\), then \(\mathrm{KPC}(\pi^n) \to \mathrm{KPC}(\pi).\)
\end{corollary}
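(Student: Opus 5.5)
The statement to prove is the last corollary: $\cIW(\pi^n,\pi)\to 0$ implies $\mathrm{KPC}(\pi^n)\to\mathrm{KPC}(\pi)$. The plan is to treat numerator and denominator of $\mathrm{KPC}$ separately and then take the quotient.

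\emph{Numerator.} Write $N(\pi):=\int \mathrm{MMD}^2(\pi_x,\pi_2)\,\pi_1(\de x)$. This is of the form \eqref{kappa_figalli} with $\Psi(\pi_2,\pi_x)=\mathrm{MMD}^2(\pi_x,\pi_2)$. By Lemma \ref{lem: kpc}, $\Psi$ satisfies the Lipschitz condition \eqref{Lipschitz_condition_Psi} with some constant $c$. Proposition \ref{lem_meta_Lipschitz} then yields
\[
|N(\pi^n)-N(\pi)|\le 2c\,\cIW(\pi^n,\pi)\to 0 .
\]
Proposition \ref{lem_meta_Lipschitz} is stated for $\kappa$, but its proof uses only the Lipschitz bound and Lemma \ref{lem:second}, so it applies verbatim to $N$.

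\emph{Denominator.} Write $D(\pi):=\int \mathrm{MMD}^2(\delta_y,\pi_2)\,\pi_2(\de y)$, which depends only on the second marginal. By Lemma \ref{lem:second}, $\cW(\pi^n_2,\pi_2)\le \cIW(\pi^n,\pi)\to 0$. It therefore suffices to show that $D$ is continuous, indeed Lipschitz, in $\cW$ on the second marginal.

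\begin{itemize}
\item Expand $\mathrm{MMD}^2(\delta_y,\mu)=k(y,y)-2\int k(y,z)\,\mu(\de z)+\iint k\,\de\mu\,\de\mu$. Integrating against $\mu(\de y)$ gives
\[
D=\int k(y,y)\,\mu(\de y)-\iint k(y,z)\,\mu(\de y)\,\mu(\de z).
\]
\item With a bounded, Lipschitz feature map (the regularity implicitly used in Lemma \ref{lem: kpc}), $y\mapsto k(y,y)$ is Lipschitz.
\item For the double integral, couple $\mu\otimes\mu$ with $\nu\otimes\nu$ via the product of an optimal coupling with itself. The bound $|k(y,z)-k(y',z')|\le C(|y-y'|+|z-z'|)$ then gives a difference of at most $2C\,\cW(\mu,\nu)$.
\end{itemize}

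The alternative is to reuse Lemma \ref{lem: kpc} with $\pi_x=\delta_y$ and $\tilde\pi_{\tilde x}=\delta_{\tilde y}$, integrated against an optimal coupling of $\pi_2^n$ and $\pi_2$. I would use whichever is cleaner.

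\emph{Quotient.} Since $\pi_2$ is non-degenerate and $k$ is characteristic, $D(\pi)>0$. This is also what makes $\mathrm{KPC}(\pi)$ well defined. Hence $D(\pi^n)\to D(\pi)>0$, and $\mathrm{KPC}(\pi^n)=N(\pi^n)/D(\pi^n)\to N(\pi)/D(\pi)$.

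The main obstacle is the denominator step. It requires the Lipschitz regularity of the kernel to be made explicit, together with positivity of $D(\pi)$, which relies on the characteristic-kernel assumption. The numerator is immediate from Proposition \ref{lem_meta_Lipschitz} and Lemma \ref{lem: kpc}.
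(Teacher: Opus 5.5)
Your proposal is correct and is essentially the paper's argument: Lemma \ref{lem: kpc} supplies the Lipschitz condition \eqref{Lipschitz_condition_Psi} for $\Psi(\pi_2,\pi_x)=\mathrm{MMD}^2(\pi_x,\pi_2)$, and Proposition \ref{lem_meta_Lipschitz} then gives convergence of the numerator. The paper leaves the denominator implicit. Your second option fills that in most cleanly: apply Lemma \ref{lem: kpc} to Dirac masses integrated against an optimal coupling of $\pi^n_2$ and $\pi_2$, which gives the bound $2c\,\cW(\pi^n_2,\pi_2)\le 2c\,\cIW(\pi^n,\pi)$ via Lemma \ref{lem:second} and needs no regularity beyond what Lemma \ref{lem: kpc} already uses.
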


As we will show later in this article, the other classes of dependence measures mentioned above are also subject to the concept of conditional Wasserstein continuity. Furthermore, we use this concept to establish convergence properties of estimators with respect to several modes of convergence that imply, in particular, weak convergence of conditional distributions.

While the conditional Wasserstein distance applies to dependence measures on general Polish spaces, we will subsequently restrict our attention to the Euclidean setting, where explicit convergence rates can be established.

\subsection{Rates of convergence for copula estimators}

In the remainder of this section, we focus on the univariate case $\mathcal{X} = \mathcal{Y} = [0,1]$. There are different ways to extend the below results to the multivariate setting; see e.g. \cite{Beiglboeck-Pammer-Posch-2023} and the discussion in Remark \ref{rem_ACE_CCE}.
The aim of this section is to introduce the so-called adapted copula estimator and determine its rates of convergence by leveraging recent results on the adapted empirical measure established in~\cite{Backhoff-Bartl-Beiglboeck-Wiesel-2022}. 
Similar techniques allow us to resolve an open problem: deriving rates of convergence for the well-known checkerboard estimator of copulas under metrics that account for weak convergence of conditional distributions \cite{strothmann2022}. Subsequently, we determine convergence rates for plug-in estimators of dependence measures constructed via the adapted copula estimator and the checkerboard copula approaches.

%In the remainder of this section we focus on the univariate case $\mathcal{X}=\mathcal{Y}=[0,1]$ and assume that $\pi$ has uniform marginals $\mathcal{U}$. \textcolor{red}{There are different ways to extend the below results to the multivariate setting, see e.g.~\cite{Beiglboeck-Pammer-Posch-2023}.... We leave this to future research.} \JA{habe die Motivation mehr auf rates of convergence for checkerboard estimator und dependence measures gelenkt. Später können wir noch erwähnen, wie man den adapted copula estimator auf ein allgemeineres Setting erweitern könnte. Passt das?} \textcolor{red}{Passt für mich. Ist der nächste Abschnitt, dann noch okay, oder sollen wir den auch anpassen?} \JA{vielleicht etwas feinschleifen:)}

In the setting $\mathcal{X} = \mathcal{Y} = [0,1]$ it is natural to represent $\pi$ by the copula $C_\pi$, that corresponds to $\pi$ via Sklar's theorem; see Section \ref{sec:prelim} for a primer on copulas and dependence modeling. While we have explained above, how to obtain a suitable distance on $\mathcal{P}(\cX\times \cY)$ that enforces continuity of dependence measures, one can ask the same question for the space of copulas. It turns out that on this space, a suitable distance is given by the $d_1$-metric, which is defined as 
\begin{align}
    d_1(C,\tilde C):=\int_0^1 \int_0^1 \left| \partial_1 C(u,v) - \partial_1 \tilde C(u,v)\right|\, \de u\,\de v;
\end{align}
see \cite[Def.~2.21]{Ansari-Rueschendorf-2021} and also Section \ref{sec:copulas} below. In particular, we show in Lemma \ref{lem:KR} that $$d_1(C_\pi, C_{\tilde\pi})=  \mathcal{KR}(\pi, \tilde \pi)\ge \mathcal{AW}(\pi, \tilde \pi)$$ for all $\pi, \tilde \pi$ with uniform marginals on \([0,1]\), where $\mathcal{KR}$ is the so-called \emph{Knothe-Rosenblatt distance}; see \eqref{eq:KR} in Section \ref{sec:ot}. In Section \ref{sec:prelim} we also discuss other relationships between convergence in $\mathcal{P}(\R\times \R)$ and convergence in the space of copulas.
%
%In contrast to classical measures of association---such as Pearson's correlation, Kendall's tau, or Spearman's rho---dependence measures that quantify the strength of functional dependence rely inherently on conditional distributions. Estimating these quantities therefore requires assessing conditional probabilities; consequently, as mentioned above, classical estimation methods based on weak continuity do not apply. To this end, we focus on stronger modes of convergence. \textcolor{red}{This was already mentioned above - if you are okay with, I would significantly cut this.}

We now turn to estimation of dependence measures. Classically, this issue has been addressed in the literature through copula estimators. In this paper we construct a new copula estimator, show its consistency and estimate its sample complexity. 
%\JA{we should somewhere discuss/mention how the adapted copula estimator behaves outside the \(\cU\)-marginal setting. If this is not clear, it might rather be a tool to derive rates of convergence for the checkerboard copula estimator. Then we should not focus on the new method. Können wir morgen diskutieren}

To detail its construction, we assume that the pairs $(X_l, Y_l)_{l=1}^N$ are i.i.d.~samples from $\pi$ and recall the \emph{adapted empirical measure}
\begin{align*}
    \hat{\pi}^N = \frac{1}{N} \sum_{l=1}^N \delta_{(\varphi^N(X_l), \varphi^N(Y_l))}
\end{align*}
from \cite{Backhoff-Bartl-Beiglboeck-Wiesel-2022}; here $\varphi^N:\R\to \R$ is a binning function with finite range. We refer to Section \ref{sec:estimation} for a detailed description. In order to determine a corresponding copula we consider the \emph{continuous adapted empirical measure} \(\hat{\pi}_c^N\), that is defined by uniformly distributing mass on the bins obtained from $\varphi^N(\R)$; see \eqref{eq:def_cont}. We call the induced copula the \emph{adapted empirical copula} 
\begin{align*}
    \hat{C}_N := C_{\hat{\pi}^N_c},
\end{align*}
where we refer to the explanations surrounding \eqref{def_cop_est} for details. Our second main contribution is the consistency result
\begin{align*}
    \|\hat{C}_N - C_\pi\|_\infty \to 0, \quad d_1(\hat{C}_N ,C_\pi)\to 0, \quad \mathcal{KR}(\hat{\pi}^N_c, \pi) \to 0
\end{align*}
in Theorems \ref{lem:triangle_est} and \ref{thm_conv_C_N}. Further, when $\pi$ has Lipschitz kernels (see Def. \ref{def:lipschitz_kernel}) and its second marginal has Lipschitz continuous cdf, then 
\begin{align*}
\E[\mathcal{CW}(\hat{\pi}_c^N, \pi)]\le \E[\mathcal{AW}(\hat{\pi}_c^N, \pi)]\le  \E[\mathcal{KR}(\hat{\pi}_c^N, \pi)] \le cN^{-1/3}
\end{align*}
and
\begin{align*}
\E[d_1(\hat{C}_N,C_\pi)] \le cN^{-1/3},
\end{align*}
as stated in Theorem \ref{lem:triangle_est} and Corollary \ref{thm_rates_C_N}.

The adapted empirical copula estimator is different from, but related to, the well-known \emph{checkerboard copula estimator} \(\hat{C}_N^\#\) studied in \cite{fgwt2021,strothmann2022}.
The latter is defined as the copula of the empirical, rank-based measure \(\hat{\pi}_c^{N,\#}\), i.e.
\begin{align*}
    \hat{C}_N^\# := C_{\hat{\pi}_c^{N,\#}}.
\end{align*}
Here, \(\hat{\pi}_c^{N,\#}\) is a continuous modification of the adapted empirical measure applied to the \emph{rank-transformed} observations such that \(\hat{\pi}_c^{N,\#}\) has uniform marginals on \((0,1)\); see \eqref{def_emp_checkerboard_measure}.
It is well known that the checkerboard estimator is a strongly consistent estimator of \(C_\pi\). In fact, Lemma \ref{lem_conv_C_N_check} and Corollary \ref{cor:checkerboar} show that it satisfies 
\begin{align*}
     \| \hat{C}_N^\# - C_\pi\|_\infty \to 0, \quad d_1( \hat{C}_N^\# ,C_\pi)\to 0, \quad \mathcal{KR}(\hat{\pi}^{N,\#}_c, \pi) \to 0.
\end{align*}
%and exhibits the same convergence properties as those in Theorem \ref{thm_conv_C_N}
As our third main contribution, we establish convergence rates of the checkerboard estimator $\hat{C}^{\#}_N$ with respect to metrics that capture conditional weak continuity such as \(d_1\), \(\cAW\), and \(\mathcal{KR}\). To the best of our knowledge, these are the first rates available in the literature. In particular, our results resolve an open question of \cite[Section 5]{strothmann2022}. To be precise, according to Theorem \ref{thm:rates} we find that
\begin{align*}
    \E[\mathcal{CW}(\hat{\pi}_c^{N,\#}, \pi)]\le \E[\mathcal{AW}(\hat{\pi}_c^{N,\#}, \pi)]\le \E[\mathcal{KR}(\hat{\pi}_c^{N,\#}, \pi)]\le cN^{-1/3}
\end{align*}
and
\begin{align*}
    \E[d_1(\hat{C}_N^{\#},C_\pi)] \le cN^{-1/3}
\end{align*}
under a Lipschitz assumption on the kernels of $\pi$.

\subsection{Rates of convergence for dependence measures}

As our fourth main contribution, we determine rates of convergence for plug-in estimators of rank-based and rearranged dependence measures. 
We adopt the ideas of \cite[Section 6]{wiesel2022} where rates of convergence have been established for Wasserstein correlations. Here, we consider rank-based dependence measures of the form 
\begin{align}
    \xi_\varphi(\pi) = \xi_\varphi(C_\pi)= \frac{\int  \varphi(\P(Y\geq y \mid X=x) - \P(Y\geq y)) \,\pi_2(\de y) \pi_1(\de x)}{\int  \varphi(\1_{\{\tilde{y} \leq y\}} - \P(Y\geq y))\, \pi_2(\de y) \pi_2(\de \tilde{y})} 
\end{align}
for \((X,Y)\sim \pi\) with uniform marginals $\mathcal{U}$. We assume that \(\varphi\) is convex and strictly convex at \(0 = \varphi(0)\) so that \(\xi_\varphi\) satisfies Axioms \ref{axiom1}--\ref{axiom3}; see \cite{Ansari-Fuchs-2026}. Specifically, for \(\varphi(x) = x^2\), the functional \(\xi_\varphi\) coincides with Chatterjee's rank correlation \cite{chatterjee2020}, whose nearest neighbour-based estimator achieves optimal convergence rates of order \(N^{-1/2}\) \cite{chatterjee2021,han2022limit}. 
In the general case, we obtain from Theorems \ref{thm_rates_C_N} and \ref{thm:rates} rates of order \(N^{-1/3}\) for plug-in estimators of \(\xi_\varphi\) which are based on the adapted empirical copula estimator \(\hat{C}_N\) and the checkerboard copula estimator \(\hat{C}_N^\#\) as stated in Corollary \ref{cor:rates}. In particular, assuming that \(\pi\in \Pi(\cU,\cU)\) has Lipschitz kernels, we find
\begin{align*}
    \E[|\xi_\varphi(\hat{C}_N) - \xi_\varphi(C_\pi)|] \le cN^{-1/3} \quad \text{and} \quad \E[|\xi_\varphi(\hat{C}_N^\#) - \xi_\varphi(C_\pi)|] \le cN^{-1/3}.
\end{align*}
For a similar result on rates of convergence for measures of sensitivity, we refer to Section \ref{sec_rank_based_DM}. Another class of dependence measures recently studied in \cite{strothmann2022} are the so-called rearranged dependence measures. For a functional \(\eta\) that satisfies Axioms \ref{axiom1}--\ref{axiom3} on the subclass of stochastically increasing copulas, the rearranged dependence measure \(\sR_\eta\) is defined by
\begin{align}\label{def_RDM0}
    \sR_\eta(\pi) := \eta(C_\pi^\uparrow).
\end{align}
Here, \(C_\pi^\uparrow\) is the copula of the increasing rearrangement of \(\pi\); see Definition \ref{lem:delta_1}. An important example for \(\eta\) is Spearman's rank correlation \(\varrho\). Recall that \(\varrho\) admits a representation through the underlying copula \(C_\pi\) by
\begin{align*}
    \varrho(C_\pi) = 12 \int_0^1 \int_0^1 C_\pi(u,v)\, \de u \de v - 3;
\end{align*}
see e.g.~\cite[Theorem 5.1.6]{Nelsen-2006}. Plugging in the adapted or the checkerboard copula estimator, we obtain the following rates of convergence for the estimators \(\hat{\sR}_\varrho^N = \rho(\hat{C}_N^\uparrow)\) and \(\hat{\sR}_\varrho^{N,\#} = \rho((\hat{C}_N^\#)^\uparrow)\) in Corollary \ref{cor_Spearman}: we find that, if \(\pi\) has Lipschitz kernels, then 
\begin{align*}
    \E[|\hat{\sR}^N_\varrho - \sR_\varrho(\pi)|] \le cN^{-1/3} \quad \text{and} \quad \E[|\hat{\sR}^{N,\#}_\varrho - \sR_\varrho(\pi)|] \le cN^{-1/3}.
\end{align*}    
This also resolves an open problem in \cite[Section 5]{strothmann2022}.
More generally, we determine rates of convergence for estimators of \(\sR_\eta\) under a Lipschitz condition on \(\eta\) similar to \eqref{Lipschitz_condition_Psi} in Theorem \ref{cor:rates_rear}.
%Then, using Theorem \ref{thm:rates}, we obtain the same rates of convergence when plugging in the checkerboard estimator into the rearranged dependence measure \eqref{def_RDM0}.

\subsection{Literature review}
\textbf{Optimal transport:}~We refer to \cite{Ru-1998, Villani-2009, filippo, ambrosio2005gradient} for an overview of optimal transport theory. Adapted optimal transport has been introduced in a series of papers \cite{Al81, hoover1984adapted, ruschendorf1985wasserstein, hellwig1996sequential, lassalle2018causal, acciaio2019extended, bonnier2023adapted}. We will particularly make use of results from \cite{Beiglboeck-Pammer-Posch-2023} in the discussion below. Interestingly, \cite{backhoff2020all} states that all adapted topologies introduced in these papers coincide. \\
\textbf{Dependence modeling: } We refer to~\cite{fdsempi2016,Nelsen-2006} for a comprehensive overview of 
copula theory. Various metrics and modes of convergence for copulas have 
been investigated in~\cite{Ansari-Rueschendorf-2021,darsow-1992,Li-Mikusinski-Taylor-1998,Mikusinski-Taylor-2009,Mikusinski-2010,wt2011}; 
see~\cite[Section~4]{fdsempi2016} for an overview. Weak convergence of the empirical copula process is studied in \cite{Chen-Huang-2007,Fermanian-Radulovic-Wegkamp-2004,Genest-Neslehova-Remillard-2017,Omelka-Gijbels-Veraverbeke-2009,Rueschendorf-1976}, strong consistency of the checkerboard copula estimator with respect to the \(d_1\)-metric is documented in \cite{fgwt2021,strothmann2022}.\\
\textbf{Dependence measures:} As mentioned above, in recent years there have been many proposals for dependence measures satisfying Axioms \ref{axiom1}--\ref{axiom3}; see e.g.~\cite{chatterjee2021,Azadkia-2025,Figalli-2025,chatterjee2020,deb2020,bernoulli2021,deb2020b,hoermann2025,strothmann2022,wiesel2022} and the references therein. \\
%\JA{Literature: Standard extension of copulas (linearly interpolated) are given in \cite{darsow-1992}, \cite{Neslehova-2007}}
The connections between dependence modeling and adapted optimal transport are relatively unexplored except for \cite{wiesel2022} to the best of our knowledge. This paper aspires to fill this gap. We hope to provide a bridge between the fields of OT, statistics and copula theory, clarifying their various interconnections.

\subsection{Structure of the paper}

The rest of the paper is organized as follows.
Section \ref{sec:prelim} introduces the notation and the basic concepts of optimal transport and dependence modeling that we use in this paper.
Section \ref{sec:comparison} provides a comparison between different notions of convergence arising in optimal transport and copula theory. In particular, we show that convergence in adapted Wasserstein distance and \(d_1\)-convergence of copulas are equivalent; see Corollary \ref{cor:d1}. For stochastically increasing distributions, these modes of convergence are also equivalent to metrics of weak convergence; see Corollary \ref{cor:si}.
In Section \ref{sec:estimation}, we focus on copula estimators derived from the adapted empirical measure and the checkerboard approach. While the checkerboard estimator---frequently used in the dependence modeling community---relies on binning rank-transformed observations, the adapted empirical copula operates directly on the original observations. Yet, as established in Corollary \ref{thm_rates_C_N} and Theorem \ref{thm:rates}, both estimators achieve the same rates of convergence---with respect to metrics of conditional weak convergence. 
In Section \ref{sec_rank_based_DM}, we provide strongly consistent plug-in estimators for rank-based dependence measures and determine their speed of convergence. 
Estimating rearranged dependence measures is covered in Section \ref{sec_RDM}.
Some proofs and auxiliary results are deferred to the appendix.

\begin{comment}
\begin{example}
    For \(\cX = \R^d\), \(\cY = \R\), and \((X,Y)\sim \pi\in \mathrm{Prob}(\R^p,\R)\) with \(\pi_2 = \nu\) non-degenerate, define for \(p\geq 1\),
    \begin{align}
        \xi_p(\pi) := \xi_p(X,Y) := \frac{\int_{\R^p} \int_\R |P(Y\geq y\mid X = x) - P(Y\geq y) | \de \nu(y) \de \mu(x)}{\int_{\R^p} \int_\R |\1_{\{Y\geq y\}} - P(Y\geq y) | \de \nu(y) \de \mu(x)}.
    \end{align}
    Then \(\xi_p\) is a dependence measure that satisfies Axioms \ref{axiom1}--\ref{axiom3}; see ... For \(p=2\), it reduces to the population version of Chatterjee's rank correlation \cite{chatterjee2021}. The inner integral \(\Psi(\nu,\pi_x) = \int_\R |P(Y\geq y\mid X=x) - P(Y\geq y)|^p \de \nu(y)\) satisfies the Lipschitz condition \eqref{Lipschitz_condition_Psi} since
    \begin{align*}
         & \int_\R |P(Y\geq y\mid X=x) - P(Y\geq y)|^p \de \nu(y)  - \int_\R |P(\tilde{Y}\geq y\mid \tilde{X}=\tilde{x}) - P(\tilde{Y}\geq y)|^p \de \nu(y) \\
         &\geq p\, \int_\R |P(Y\geq y\mid X=x) - P(\tilde{Y}\geq y\mid \tilde{X}=\tilde{x})| \de \nu(y) = p\, \cW(\pi_x, \tilde{\pi}_{\tilde{x}}),
    \end{align*}
    where \((\tilde{X},\tilde{Y})\sim \tilde{\pi}\).
    \JA{do we need continuous distribution functions for the equality?}
    Hence, \(\pi^N \to \pi\) in \(\cIW\) implies \(\xi_p(\pi^N) \to \xi_p(\pi)\) by Lemma \ref{lem_meta_Lipschitz}.
\end{example}
\end{comment}

\section{Notation and preliminary results} \label{sec:prelim}

\subsection{General notation}
Let us fix a probability space $(\Omega, \F, \P)$ on which all random elements are defined. Throughout the paper, we consider Polish spaces $\mathcal{X}$ with a compatible metric $d$; in particular we equip the space $\R^{k}$, $k\ge 1$,  with the Euclidean norm $|\cdot|$. We write $\mathcal{P}(\mathcal{X})$ for the set of (Borel) probability measures on $\mathcal{X}$ and write $X\sim \pi$ for a random variable $X$, if $X$ has law $\pi\in \mathcal{P}(\mathcal{X})$. For random variables $X$ and $Y$, we write $X\eqd Y$ whenever they have the same distribution. We write $F_X$ for the cdf of $X$, and if $\mathcal{X}=\R$, we write $F_X^{-1}$ for the left-continuous quantile function of $X$. Similarly we write $F_\pi$ for the cdf of $\pi$. We write $\mathcal{U}^A$ for the uniform distribution on a Borel set $A\subseteq \mathcal{X}$ and define $\mathcal{U}:=\mathcal{U}^{[0,1]}$, i.e.~the uniform distribution on the unit interval $[0,1]\subset \R.$ For two measures $\pi, \tilde\pi \in \mathcal{P}(\mathcal{X})$ we denote by $(\pi\otimes \tilde \pi)(A\times B):=\pi(A)\tilde \pi(B)$ for Borel $A,B\subseteq \mathcal{X}$ the product coupling of $\pi, \tilde\pi$. 
For a Borel function $f:\mathcal{X}\to \mathcal{Y}$ we write $f_\#\pi$ for the pushforward measure of $\pi$ through $f,$ i.e. 
$$(f_\#\pi)(A):=\pi(\{x\in \mathcal{X}:\ f(x)\in A\})$$
for all Borel sets $A\subseteq \mathcal{Y}$.  
For a pair $(X,Y)\sim \pi\in \mathcal{P}(\mathcal{X}\times \mathcal{Y})$ we denote the marginal laws by $\pi_1(\cdot)= \P(X\in \cdot)$, $\pi_2(\cdot)= \P(Y\in \cdot)$ and recall the disintegration rule
\begin{align} \label{eq:disintegration}
 \pi(A\times B)=\P((X,Y) \in A\times B)=\int_A \P(Y\in B| X=x) \,\pi_1(\de x) =\int_A \pi_x(B)\,\pi_1(\de x)=: (\pi_1\otimes \pi_x)(A\times B)    
\end{align}
for all Borel sets $A\subseteq \mathcal{X}, B\subseteq \mathcal{Y}$, where $\mathcal{X}\ni x\mapsto \P(Y\in\cdot|X=x) =:\pi_x(\cdot)\in \mathcal{P}(\mathcal{Y})$ is a Borel measurable function. Similarly to above, for $\pi\in \mathcal{P}(\mathcal{X}\times \R)$ and fixed $x\in \mathcal{X}$, we write $F_{Y|X=x}$ and $F^{-1}_{Y|X=x}$ for the cdf and quantile function of the distribution $\P(Y\in\cdot|X=x).$

We make the convention that we write $x,y$ when considering vectors in $\mathcal{X}$ and $\mathcal{Y}$ respectively, and $u,v$ when considering numbers in $[0,1]$. Throughout we denote generic constants (which might increase from line to line) by $c>0.$

\subsection{Optimal transport}\label{sec:ot}

For the remainder of the paper we set $\mathcal{X}=\R^k$ and $\mathcal{Y}=\R^l$, $k,l\in \N.$
In this section we define some basic distances in Optimal Transport (OT) theory. We refer to \cite{Ru-1998, Villani-2003,Villani-2009} for details.

We define the set of couplings of two probability measures $\pi,\tilde \pi\in \mathcal{P}(\R^{d})$, $d\ge 1$, via
\begin{align*}
\Pi(\pi,\tilde \pi) := \left\{\gamma \in \mathcal{P}(\R^{d}\times \R^{d}):\, \ \gamma(A\times \R^{d})=\pi(A), \ \gamma(\R^{d}\times A)=\tilde \pi(A)\quad \forall \text{ Borel }A\subseteq \R^d\right\}.
\end{align*}
The \emph{Wasserstein distance} between two laws $\pi, \tilde \pi \in \mathcal{P}(\R^d)$ is defined as
\begin{align*}
\mathcal{W}(\pi,\tilde \pi)=\inf_{\gamma \in \Pi(\pi,\tilde \pi)} \int |x-y|\,\gamma(\de x,\de y).
\end{align*}
To simplify notation below, we often consider pairs of random vectors $(X,Y)\sim \pi \in \mathcal{P}(\R^{k}\times \R^l)$ and $(\tilde X,\tilde Y)\sim \tilde \pi\in \mathcal{P}(\R^{k}\times \R^l)$.
As mentioned in the Introduction, we will consider several refinements of $\mathcal{W}$ in this paper. The first of these is the \emph{adapted Wasserstein distance}, which was defined in \eqref{IW_leq_AW} as
\begin{align*}
    \mathcal{AW}(\pi, \tilde \pi) = \inf_{\gamma \in \Pi(\pi_1, \tilde \pi_1)} \int |x-\tilde x|+\mathcal{W}(\pi_x, \tilde \pi_{\tilde x})\,\gamma(\de x, \de \tilde x) 
\end{align*}
for $\pi, \tilde \pi\in \mathcal{P}(\R^{k}\times \R^l).$ For $l=1$ we can explicitly compute the  term $\mathcal{W}(\pi_x, \tilde \pi_{\tilde x})$ and find the representation 
\begin{align} \label{eq:AW_simple}
\begin{split}
    \mathcal{AW}(\pi, \tilde \pi) &= \inf_{\gamma \in \Pi(\pi_1, \tilde \pi_1)} \int |x-\tilde x|+\int_0^1 |F_{Y|X=x}^{-1}(v)-F_{\widetilde Y|\widetilde X= \tilde x}^{-1}(v)|\,\de v \,\gamma(\de x, \de \tilde x)\\
    &= \inf_{\gamma \in \Pi(\pi_1, \tilde \pi_1)} \int |x-\tilde x|+\int |F_{Y|X=x}(y)-F_{\widetilde Y|\widetilde X= \tilde x}(y)|\,\de y \,\gamma(\de x, \de \tilde x)
\end{split}    
\end{align}
where we recall $(X,Y)\sim \pi, (\tilde X, \tilde Y) \sim \tilde \pi$; we refer to Lemma \ref{lem:L1_integral} for the second equality. We will use the representation \eqref{eq:AW_simple} frequently. 

Next we consider the \emph{Knothe-Rosenblatt distance} $\mathcal{KR}$. For notational simplicity we only give its definition for $k=l=1$ and refer to \cite{Beiglboeck-Pammer-Posch-2023} for a more general definition. For this, we note that choosing $\gamma \in \Pi(\pi_1, \tilde \pi_1)$ in \eqref{eq:AW_simple} equal to the quantile coupling $(F_{X}^{-1}, F_Y^{-1}) _{\#}\mathcal{U}$  gives rise to
\begin{align}\label{eq:KR}
    \mathcal{KR}(\pi, \tilde \pi) := \int_0^1 |F_X^{-1}(u) - F_{\widetilde X}^{-1}(u)| + \int_0^1 |F^{-1}_{Y|X=F_{X}^{-1}(u)}(v)-F^{-1}_{\widetilde Y|\widetilde X= F_{\widetilde X}^{-1}(u)}(v)|\,\de v \,\de u.
\end{align}
%
%Similarly we call 
%\begin{align*}
%\Big(\Big(F_{X}^{-1}(u),F^{-1}_{Y|X=F_{X}^{-1}(u)}(v) \Big) , \Big(F_{\widetilde{X}}^{-1}(u), F^{-1}_{\widetilde Y|\widetilde X= F_{\widetilde X}^{-1}(u)}(v)\Big) _{\#} (\mathcal{U}\otimes \mathcal{U})(\de u, \de v)   
%\end{align*}
%the \emph{Knothe-Rosenblatt rearrangement}; see \cite{knothe1957contributions} \textcolor{red}{I think this is actually never used.}. 
The Knothe-Rosenblatt distance $\mathcal{KR}$ is a metric on the space of probability measures; it is investigated in detail in \cite{Beiglboeck-Pammer-Posch-2023}. From \eqref{eq:AW_simple} and \eqref{eq:KR} we clearly have $\mathcal{AW}\le \mathcal{KR}$, where the inequality is strict in general. However, the topologies generated by $\mathcal{AW}$ and $\mathcal{KR}$ are equal in the following sense.
\begin{lemma}[{\cite[Theorem 1.4]{Beiglboeck-Pammer-Posch-2023}}] \label{lem:beiglboeck-pammer}
    For a sequence $(\pi^n)$ in $\mathcal{P}(\R^2)$ and $\pi\in \mathcal{P}(\R^2)$ we have
    \begin{align*}
        \mathcal{AW}(\pi^n, \pi)\to 0 \quad \Leftrightarrow \quad  \mathcal{KR}(\pi^n, \pi) \to 0.
    \end{align*}
\end{lemma}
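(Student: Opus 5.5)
Since $\mathcal{AW}\le\mathcal{KR}$ holds pointwise, as noted directly after \eqref{eq:KR}, the implication ``$\Leftarrow$'' is immediate. All the work lies in ``$\Rightarrow$'': we assume $\mathcal{AW}(\pi^n,\pi)\to0$ and must show $\mathcal{KR}(\pi^n,\pi)\to0$. I would first decompose $\mathcal{KR}$ into its two summands in \eqref{eq:KR}. The first summand equals $\mathcal{W}(\pi^n_1,\pi_1)$, because in one dimension the quantile coupling is optimal. Projecting an $\mathcal{AW}$-coupling onto the first coordinate shows $\mathcal{W}(\pi^n_1,\pi_1)\le\mathcal{AW}(\pi^n,\pi)\to0$, so the first summand vanishes.

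For the second summand, let $\gamma^n\in\Pi(\pi^n_1,\pi_1)$ be (near-)optimal for $\mathcal{AW}$. The difficulty is that $\gamma^n$ need not be the quantile coupling $q^n:=(F^{-1}_{\pi^n_1},F^{-1}_{\pi_1})_\#\mathcal{U}$. My plan is to compare the integrals of $\mathcal{W}(\pi^n_x,\pi_{\tilde x})$ under $\gamma^n$ and under $q^n$ through an approximation argument on the limit kernel $x\mapsto\pi_x$. By Lusin's theorem, for each $\varepsilon>0$ there are a compact $K$ with $\pi_1(K^c)<\varepsilon$ and a bounded continuous kernel $k\colon\R\to\mathcal{P}(\R)$ (continuous for $\mathcal{W}$) with $k=\pi_\cdot$ on $K$. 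Moment control is needed to make the Wasserstein distances integrable. Since $\mathcal{AW}$-convergence implies $\mathcal{W}$-convergence of the joint laws, first moments are uniformly integrable, and the errors off $K$ can be made small uniformly in $n$. The triangle inequality then gives
\begin{align*}
\int \mathcal{W}(\pi^n_{x},\pi_{\tilde x})\,q^n(\de x,\de\tilde x) \le \int \mathcal{W}(\pi^n_{x},k(x))\,\pi^n_1(\de x) + \int \mathcal{W}(k(x),k(\tilde x))\,q^n + \int \mathcal{W}(k(\tilde x),\pi_{\tilde x})\,\pi_1(\de\tilde x).
\end{align*}
The middle term tends to $0$ because $q^n$ converges to the diagonal coupling of $\pi_1$ and $k$ is continuous. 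The last term is small by the choice of $k$.

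The first term I would again bound via $\gamma^n$:
\begin{align*}
\int\mathcal{W}(\pi^n_x,k(x))\,\pi^n_1(\de x)\le\int\bigl[\mathcal{W}(\pi^n_x,\pi_{\tilde x})+\mathcal{W}(\pi_{\tilde x},k(\tilde x))+\mathcal{W}(k(\tilde x),k(x))\bigr]\,\gamma^n.
\end{align*}
The first piece is at most $\mathcal{AW}(\pi^n,\pi)\to0$ and the second is small. For the third, $\gamma^n$ also concentrates on the diagonal, since $\int|x-\tilde x|\,\gamma^n\le\mathcal{AW}(\pi^n,\pi)\to 0$. Continuity of $k$ together with the uniform integrability from above then makes the third piece vanish as well.

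The main obstacle is one subtlety: $\mathcal{KR}$ evaluates kernels at the quantile points $F^{-1}_{\pi^n_1}(u)$, and $\pi^n_1$ may have atoms. At an atom the quantile coupling splits mass in a prescribed way, while the kernel $\pi^n_x$ is constant in $x$ there, so this is harmless. If $\pi_1$ itself has atoms, $q^n$ may pair points of $\pi^n_1$ lying near an atom of $\pi_1$ differently than $\gamma^n$ does. The argument above handles this anyway, because both couplings are compared only with the continuous kernel $k$ and never with each other. Still, verifying the uniform-integrability estimates off $K$ is the step I expect to need the most care. Letting $\varepsilon\to0$ concludes the proof.
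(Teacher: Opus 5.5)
Your argument is correct, and it differs from the paper, which gives no proof here but imports the statement from \cite[Theorem 1.4]{Beiglboeck-Pammer-Posch-2023}. There the Knothe--Rosenblatt distance is treated in a general multi-period setting. You instead give a short self-contained two-period proof. The near-optimal $\mathcal{AW}$-coupling $\gamma^n$ and the quantile coupling $q^n$ both lie in $\Pi(\pi^n_1,\pi_1)$, and both converge weakly to $(\mathrm{id},\mathrm{id})_\#\pi_1$. So once the limit kernel is replaced by a continuous kernel $k$, the two couplings become interchangeable, and atoms of $\pi_1$ need no separate treatment. The citation buys generality: with more periods, the later stages of the Knothe--Rosenblatt coupling are quantile couplings of conditional laws depending on the whole past, so your argument would have to be iterated. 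Your route instead makes transparent why fixing the first-coordinate coupling to be the quantile coupling costs nothing in the limit.

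Two comments on details. First, the uniform integrability in $n$ that you single out as the delicate step is not needed. In your decomposition every term containing the possibly discontinuous kernel $x\mapsto\pi_x$ is integrated against the fixed measure $\pi_1$, via the second marginal of $\gamma^n$ or $q^n$. The remaining terms are either bounded by $\mathcal{AW}(\pi^n,\pi)$ or are integrals of the bounded continuous function $(x,\tilde x)\mapsto\mathcal{W}(k(x),k(\tilde x))$, which vanishes on the diagonal.

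Second, the step that does need care is producing $k$ with $\int\mathcal{W}(k(\tilde x),\pi_{\tilde x})\,\pi_1(\de\tilde x)$ small. Lusin plus an arbitrary bounded continuous extension is not enough, because the bound on $k$ off $K$ may blow up as $\pi_1(K^c)\to 0$. Fix this in three steps:
\begin{enumerate}
\item Truncate: replace $\pi_x$ by $\delta_0$ wherever $m(x):=\int|y|\,\pi_x(\de y)>R$, which costs $\int_{\{m>R\}}m\,\de\pi_1\to 0$.
\item Apply Lusin to the truncated kernel.
\item Extend across the gaps of $K$ by linear interpolation of quantile functions.
\end{enumerate}
This keeps $k$ continuous for $\mathcal{W}$ with $\mathcal{W}(k(x),\delta_0)\le R$, so the error off $K$ is at most $2R\,\pi_1(K^c)$. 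Choose $K$ after $R$.
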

In this paper we focus on a third distance as introduced in \eqref{eq:causal}, namely the \emph{conditional Wasserstein distance}. For the choice $\mathcal X=\R^k, \mathcal{Y}=\R^l$ it is defined as
\begin{align*}
    \mathcal{CW} (\pi,\tilde \pi) = \inf_{\gamma\in \Pi(\pi_1, \tilde{\pi}_1)} \int \mathcal{W}(\pi_x,\tilde \pi_{\tilde x})\,\gamma(\de x,\de \tilde x) 
\end{align*}
where $\pi, \tilde \pi \in \mathcal{P}(\R^{k}\times \R^l)$. Note that the conditional Wasserstein distance $\mathcal{CW}$ is a pseudo-metric on $\mathcal{P}(\R^{k}\times \R^l)$; see Example \ref{ex:pseudo} below. In fact it is a Wasserstein metric on $\mathcal{P}(\mathcal{P}(\R^{l}), \mathcal{W}_{\mathcal{P}(\R^{l})})$: recalling the disintegration formula $\pi =  \pi_1\otimes \pi_x$ as stated in \eqref{eq:disintegration}, we can consider the law $(x\mapsto \pi_{x})_{\#} \pi_1\in \mathcal{P}(\mathcal{P}(\R^{l}))$. Then the identity
\begin{align*}
    \mathcal{CW} (\pi,\tilde \pi)= \mathcal{W}_{\mathcal{P}(\R^l)}((x\mapsto \pi_{x})_{\#} \pi_1, (x\mapsto \tilde \pi_{x})_{\#} \tilde \pi_1)
\end{align*}
holds. 

\begin{example}\label{ex:pseudo}
Take $\pi=(u, 1-u)_{\#}\mathcal{U}\in \Pi(\mathcal{U}, \mathcal{U})$ and $\tilde \pi=(u,u)_{\#}\mathcal{U}$. Note that $\pi \neq \tilde \pi$. However, choosing $\gamma=(1-u,u)_{\#}\mathcal{U}$ we find
\begin{align*}
\mathcal{CW}(\pi, \tilde \pi)&=\inf_{\gamma\in \Pi(\cU,\cU)} \int \cW(\pi_u,\tilde \pi_{\tilde{u}})\, \gamma(\de u,\de \tilde{u}) \le \int \cW(\delta_u,\delta_{u})\, \de x  = 0.
\end{align*}
\end{example}

The distance $\mathcal{CW}$ is new to the best of our knowledge. It can be recovered from $\mathcal{AW}$ by setting the distance between the coordinates $(x,\tilde x)$ equal to zero. In spirit, it is also related to \cite[Proposition 12.4.6]{ambrosio2005gradient}, who consider 
\begin{align}\label{eq:gigli}
    \int \mathcal{W}(\pi_x,\tilde\pi_x)\,\pi_1(\de x)
\end{align}
for measures $\pi, \tilde\pi \in \mathcal{P}(\R^{k}\times \R^l)$ with same first marginal $\pi_1=\tilde \pi_1$. Choosing the identity coupling for the first marginals, \eqref{eq:gigli} is clearly an upper bound of $\mathcal{CW}(\pi, \tilde \pi).$
We also record the following property of $\mathcal{CW}:$
\begin{lemma}\label{lem:second}
    For $\pi, \tilde \pi\in \mathcal{P}(\R^{k}\times \R^l)$ we have
    \begin{align*}
        \mathcal{W}(\pi_2, \tilde\pi_2) \le \mathcal{CW}(\pi, \tilde\pi).
    \end{align*}
\end{lemma}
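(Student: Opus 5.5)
The plan is to show that any coupling $\gamma\in\Pi(\pi_1,\tilde\pi_1)$, combined with optimal couplings of the conditional laws, produces a coupling of the second marginals $\pi_2$ and $\tilde\pi_2$ whose cost is exactly $\int \mathcal{W}(\pi_x,\tilde\pi_{\tilde x})\,\gamma(\de x,\de\tilde x)$. Taking the infimum over $\gamma$ then gives the claim.

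Fix $\gamma\in\Pi(\pi_1,\tilde\pi_1)$ with $\int \mathcal{W}(\pi_x,\tilde\pi_{\tilde x})\,\gamma(\de x,\de\tilde x)<\infty$; otherwise there is nothing to show. For each pair $(x,\tilde x)$, pick an optimal coupling $\eta_{x,\tilde x}\in\Pi(\pi_x,\tilde\pi_{\tilde x})$ for $\mathcal{W}$. Optimal couplings exist because the cost $|y-\tilde y|$ is continuous and nonnegative. Define
\begin{align*}
\Gamma(B\times \tilde B):=\int \eta_{x,\tilde x}(B\times\tilde B)\,\gamma(\de x,\de\tilde x),\qquad B,\tilde B\subseteq\R^l \text{ Borel}.
\end{align*}
By the disintegration rule \eqref{eq:disintegration}, the first marginal of $\Gamma$ is $\int\pi_x(B)\,\gamma(\de x,\de\tilde x)=\int\pi_x(B)\,\pi_1(\de x)=\pi_2(B)$. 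Likewise, its second marginal is $\tilde\pi_2$. Hence $\Gamma\in\Pi(\pi_2,\tilde\pi_2)$, and
\begin{align*}
\mathcal{W}(\pi_2,\tilde\pi_2)\le\int|y-\tilde y|\,\Gamma(\de y,\de\tilde y)=\int\!\!\int |y-\tilde y|\,\eta_{x,\tilde x}(\de y,\de\tilde y)\,\gamma(\de x,\de\tilde x)=\int\mathcal{W}(\pi_x,\tilde\pi_{\tilde x})\,\gamma(\de x,\de\tilde x).
\end{align*}
Taking the infimum over $\gamma$ yields $\mathcal{W}(\pi_2,\tilde\pi_2)\le\mathcal{CW}(\pi,\tilde\pi)$.

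The main obstacle is measurability: we need $(x,\tilde x)\mapsto\eta_{x,\tilde x}$ to be a measurable kernel so that $\Gamma$ is well defined. This follows from the standard measurable selection of optimal plans, e.g.\ \cite[Corollary 5.22]{Villani-2009}, applied to the measurable map $(x,\tilde x)\mapsto(\pi_x,\tilde\pi_{\tilde x})$.

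One can also avoid selection entirely. Choose, for every $\varepsilon>0$, an $\varepsilon$-optimal kernel, or work in the case $l=1$ via quantile couplings $\eta_{x,\tilde x}=(F^{-1}_{Y|X=x},F^{-1}_{\widetilde Y|\widetilde X=\tilde x})_\#\mathcal{U}$, which are explicitly measurable. Alternatively, use the identity $\mathcal{CW}(\pi,\tilde\pi)=\mathcal{W}_{\mathcal{P}(\R^l)}\big((x\mapsto \pi_{x})_{\#} \pi_1, (x\mapsto \tilde \pi_{x})_{\#} \tilde \pi_1\big)$ noted above together with the fact that the barycenter (intensity) map $\mathcal{P}(\mathcal{P}(\R^l))\to\mathcal{P}(\R^l)$ is $1$-Lipschitz for $\mathcal{W}$. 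The intensity of $(x\mapsto\pi_x)_\#\pi_1$ is $\pi_2$, so this gives the claim directly, and the Lipschitz property is exactly the gluing argument above.

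Finally, I would remark that the finiteness of $\mathcal{W}(\pi_2,\tilde\pi_2)$ is not needed, since the inequality is trivial whenever the right-hand side is infinite.
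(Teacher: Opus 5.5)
Your proof is correct and is essentially the paper's argument. The paper cites convexity of the Wasserstein distance (Villani, Theorem 4.8) for the mixtures $\pi_2=\int \pi_x\,\gamma(\de x,\de\tilde x)$ and $\tilde\pi_2=\int\tilde\pi_{\tilde x}\,\gamma(\de x,\de\tilde x)$ and then takes the infimum over $\gamma$. Your gluing of measurably selected optimal couplings $\eta_{x,\tilde x}$ is exactly the proof of that convexity theorem written out, including the selection step via Corollary 5.22.
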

\begin{proof}
    For any $\gamma\in \Pi(\pi_1, \tilde\pi_1)$ we conclude from convexity of the Wasserstein distance (see \cite[Theorem 4.8]{Villani-2009}) that 
    \begin{align*}
         \mathcal{W}(\pi_2, \tilde\pi_2)= \mathcal{W} \Big( \int \pi_x\, \gamma(\de x, \de \tilde x), \int \tilde\pi_{\tilde x} \,\gamma(\de x,\de \tilde x)\Big)\le \int \mathcal{W}(\pi_x, \tilde \pi_{\tilde x})\,\gamma(\de x, \de \tilde x).
    \end{align*}
    Optimizing over $\gamma$ yields the claim.
\end{proof}

We summarize the relationships between the different distances introduced in this section as follows: we have
\begin{align} \label{eq: general_inequalities}
\mathcal{W}(\cdot\,_2,\cdot\,_2)\le
    \begin{matrix}
    \mathcal{W}\\
     \mathcal{CW} 
     \end{matrix} 
     \; \le \mathcal{AW} \le \mathcal{KR}.
\end{align}

\subsection{Dependence modeling and copulas} \label{sec:copulas}

In this section we recall some concepts of dependence modeling and copula theory. We refer to \cite{fdsempi2016, Nelsen-2006} for details. 

A \emph{copula} $C:[0,1]^2 \to [0,1]$ is a distribution function on \([0,1]^2\) with uniform marginals (i.e.~\(C(u,1) = u = C(1,u)\) for all \(u\in [0,1]\)).
Translating to OT language, a copula $C$ is the distribution function of a coupling \(\pi=\pi_C \in \Pi(\cU,\cU)\), where we recall that \(\cU\) denotes the uniform distribution on \([0,1]\). We call $\pi_C$ the \emph{coupling induced by $C$}.
Due to Sklar's theorem, there is a one-to-one correspondence between the probability measures
\begin{align*}
    \mathcal{P}_c(\R^2):= \{\pi \in \mathcal{P}(\R^2): (x,y)\mapsto F_\pi(x,y) \text{ is continuous}\}
\end{align*}
and the set of copulas together with the marginal cdfs of $\pi$: every $\pi \in \mathcal{P}_c(\R^2)$ can be uniquely decomposed into its marginal distribution functions \(F_X\) and \(F_Y\) (recalling $(X,Y)\sim \pi$) and a copula \(C\), so that
\begin{align}\label{eq:sklar}
    F_{X,Y}(x,y) = C(F_X(x),F_Y(y)) \quad \text{for all } (x,y)\in \R^2;
\end{align}
see \cite{Nelsen-2006}. To make the connection clear, we write $C=C_\pi$ and call $C_\pi$ the \emph{copula induced by $\pi$.}

The most important copulas are the \emph{independence copula} \(C^\perp\) and the \emph{comonotonicity copula} \(C^+\) defined by
\begin{align}
    C^\perp(u,v):= uv \qquad \text{and} \qquad C^+(u,v) := \min\{u,v\}\qquad \text{for all }(u,v)\in [0,1]^2.
\end{align}
These model independence and comonotonicity (i.e. perfect \emph{positive} dependence), respectively. Other important copulas are \emph{checkerboard copulas}. These are copulas having mass uniformly distributed on each cube 
\begin{align}\label{eq:square}
S_{ij}^n:=\Big(\frac{i-1}{n},\frac i n\Big]\times \Big(\frac{j-1}{n},\frac j n\Big] \quad \text{ for } i,j\in \{1,\ldots,n\},
\end{align}
where $n\in \N$. In other words, a checkerboard copula \(C\) admits a Lebesgue density \(c\) such that, for all \(i,j\in \{1,\ldots,n\}\), we have \(c(u,v) = c(\tilde{u},\tilde{v})\) for all \((u,v),(\tilde{u},\tilde{v})\in S_{ij}^n\); see \cite[Section 4.1.1]{fdsempi2016}. Since copulas have uniform marginal distributions, any \(n\)-checkerboard copula corresponds to a doubly stochastic \((n\times n)\)-matrix \(A_n = (a_{ij})_{i,j=1}^n\), i.e.~ \(a_{ij}\geq 0\) and \(\sum_{k=1}^n a_{kj} = \sum_{k=1}^n a_{ik} = 1\) for all \(i,j\in \{1,\ldots,n\}\), where the associated \(n\)-checkerboard copula is given by
\begin{align}\label{def:check_cop}
    C_n^\#[A_n](u,v) &:= \int_0^u\int_0^v c(\tilde{u},\tilde{v}) \,\de \tilde{u}\,\de \tilde{v} \quad \text{and }\quad  c(\tilde{u},\tilde{v}) := n\, a_{ij} \quad \text{ for } (\tilde{u},\tilde{v})\in S_{ij}^n.
\end{align}
For a bivariate copula \(C\), consider the doubly stochastic matrix \(A_n^C = (a_{ij}^C)_{1\leq i,j \leq n}\) with 
\begin{align}\label{eq:CS_ij}
a_{ij}^C =n \,V_C(S_{ij}^n):= n \Big[C\Big(\frac i n, \frac j n\Big) - C\Big(\frac{i-1}{n},\frac j n\Big) - C\Big(\frac i n,\frac {j-1} n\Big) + C\Big(\frac{i-1}{n}, \frac{j-1}{n}\Big)\Big],    
\end{align}
where \(V_C(S_{ij}^n)\geq 0\) is also called the the \emph{\(C\)-volume} of the box \(S_{ij}^n\). Note that \(A_n^C\) is doubly stochastic since \(C\) has uniform marginals. Then the \emph{\(n\)-checkerboard approximation} of \(C\) is defined by
\begin{align}
    C_n^\#[C] := C_n^\#[A_n^C], \quad n\in \N.
\end{align}
Since copulas fully model the dependence structure of a distribution, they also specify the associated conditional distributions. For \(\pi\in \cP_c(\R^2)\), it is well known that
\begin{align}\label{eq:d1}
\begin{split}
    \pi_x((-\infty,y]) &= \lim_{\varepsilon\downarrow 0} \frac{\pi((-\infty,x+\varepsilon]\times (-\infty,y]) - \pi((-\infty,x]\times (-\infty,y])}{\pi_1((-\infty,x+\varepsilon]) - \pi_1((-\infty,x])} \\
    &= \lim_{\varepsilon\downarrow 0} \frac{C_\pi(F_X(x+\varepsilon),F_Y(y)) - C_\pi(F_X(x),F_Y(y))}{F_X(x+\varepsilon) - F_X(x)} \\
    &= \partial_1 C_\pi(F_X(x),F_Y(y)),
\end{split}
\end{align}
for \(\pi_1\)-almost all \(x\) and for all \(y\) outside a \(\pi_2\)-null set that may depend on \(x\); see \cite[Theorem 2.2]{Ansari-Rueschendorf-2021} for the general case, including discontinuous marginal distribution functions.
Here \(\partial_1 C(u,v) := \frac{\de}{\de u} C(u,v)\) denotes the first partial derivative of \(C\).
If \(\pi\in \Pi(\cU,\cU)\), then the above expression simplifies to 
\begin{align}\label{rep_copula_derivative}
    \pi_u([0,v]) = \partial_1 C_\pi(u,v) \quad \text{for }(\mathcal{U}\otimes\mathcal{U})\text{-almost all } (u,v)\in [0,1]^2.
\end{align}

Motivated by the insights above, we focus on the following metric in this note. 

\begin{definition}
For copulas $C, \tilde C$ we define
\begin{align}\label{def_partial1_convergence}
    d_1(C,\tilde C):=\int_0^1 \int_0^1 \left| \partial_1 C(u,v) - \partial_1 \tilde C(u,v)\right|\, \de u\,\de v;
\end{align}
see \cite[Def.~2.21]{Ansari-Rueschendorf-2021}. If $d_1(C_n, C)\to 0$ for a sequence of copulas $(C_n),$ then we say that $(C_n)$ is $\partial_1$-convergent to $C.$
%\textcolor{red}{Maybe also cite origins?} \JA{habe nochmal nachgeschaut: In der Literatur davor wird die stärkere \(\partial\)-convergence betrachtet, definiert über \(\partial_1\)- und \(\partial_2\)-convergence. \cite{wt2011} schaut sich dieselbe Konvergenz in \eqref{def_partial1_convergence} an, aber über Markov-Kerne definiert (also für rechtsseitig stetige Versionen von \(u\mapsto\partial_1 C_n(u,v)\) und \(u\mapsto \partial_1 C(u,v)\)) an. Wolfgang besteht auf der Schreibeweise mit Markov-Kernen (spielt im Integral letztlich keine Rolle). Paper mit der Definition über \eqref{def_partial1_convergence} würde er sofort ablehnen als Reviewer:) Deshalb entscheide gerne du, ob wir \cite{wt2011} (auch wenn er uns nicht reviewen wird) als Referenz angeben und seine Definition zu seinem Unwollen abändern oder auf die Definition in \cite{Ansari-Rueschendorf-2021} verweisen.}
\end{definition}

The distance $d_1$ is a metric on the set of copulas; see \cite{wt2011}. Moreover, for two copulas $C,\tilde C$ we have
\begin{align}\label{eq:l1_d1}
\begin{split}
    \|C-\tilde C\|_1 &:= \int_{[0,1]^2} |C(u,v)-\tilde C(u,v)|\,\de u \,\de v \\
    &\phantom{:}= \int_{[0,1]^2} \Big|\int_0^u \partial_1 C(s,v)-\partial_1 \tilde C(s,v)\,\de s\Big|\,\de u \,\de v\\
    &\phantom{:}\le  \int_{[0,1]^2}\int_0^1 |\partial_1 C(s,v)-\partial_1 \tilde C(s,v)| \de s\,\de u \,\de v = d_1(C, \tilde C).
\end{split}
\end{align}
Hence, by continuity of copulas, \(\partial_1\)-convergence implies pointwise convergence of copulas. The latter is equivalent to uniform convergence because copulas are uniformly Lipschitz in the sense that \(|C(u,v) - C(\tilde u , \tilde v)| \leq |u-v| + |\tilde u - \tilde v|\) for all \(u,v,\tilde u, \tilde v\in [0,1]\) and \(C\in \cC\); see \cite[Theorem 2.2.4]{Nelsen-2006}. This gives
\begin{align*}
    d_1(C_n,C) \to 0 \quad &\Rightarrow \quad C_n(u,v)\to C(u,v) \quad \mathcal{U}([0,1]^2)\text{-a.s.}\quad\\ &\Leftrightarrow \quad \lVert C_n-C\rVert_{\infty} := \sup_{u,v\in [0,1]}|C_n(u,v) - C(u,v)| \to 0
\end{align*}
The following \(d_1\)-approximation through checkerboard copulas is a consequence of \cite[Theorem 5]{Mikusinski-2010}.
\begin{lemma}
    For any \(C\in \cC\), it is \(d_1(C_n^\#[C],C) \to 0\) as \(n\to \infty\).
\end{lemma}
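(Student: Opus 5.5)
The plan is to prove $d_1(C_n^\#[C],C)\to 0$ directly, by writing $\partial_1$ of the checkerboard approximation as an average of $\partial_1 C$ over strips and then invoking $L^1$-convergence of conditional expectations. Fix $n$ and a column index $j$, and take $u\in((i-1)/n,i/n]$ and $v=j/n$. From \eqref{def:check_cop} the density of $C_n^\#[C]$ is constant, equal to $n\,a_{ij}^C = n^2 V_C(S_{ij}^n)$ on $S^n_{ij}$. Hence
\begin{align*}
\partial_1 C_n^\#[C](u,j/n) = n\Big[C\big(\tfrac in,\tfrac jn\big)-C\big(\tfrac{i-1}n,\tfrac jn\big)\Big] = n\int_{(i-1)/n}^{i/n}\partial_1 C(s,j/n)\,\de s .
\end{align*}
So at grid values $v=j/n$, $\partial_1 C_n^\#[C](\cdot,v)$ is the conditional expectation of $\partial_1 C(\cdot,v)$ with respect to the $\sigma$-algebra $\mathcal{G}_n$ generated by the intervals $((i-1)/n,i/n]$. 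For $v$ strictly between grid points, $\partial_1 C_n^\#[C](u,v)$ is linear in $v$ on each $((j-1)/n,j/n]$, namely the linear interpolation of the two grid values.

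The main step is to control $\int_0^1\int_0^1|\partial_1 C_n^\#[C]-\partial_1 C|\,\de u\,\de v$ by splitting the error into two pieces. The first piece is $E[\partial_1 C(\cdot,v)\mid\mathcal G_n]-\partial_1 C(\cdot,v)$, evaluated at every $v$ (not only grid points). The second piece is the interpolation error, the difference between the linear interpolation in $v$ of $E[\partial_1 C(\cdot,j/n)\mid\mathcal G_n]$ and $E[\partial_1 C(\cdot,v)\mid\mathcal G_n]$.

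For the second piece, I will use that $v\mapsto\partial_1 C(s,v)$ is a.e. a cdf in $v$, so it is nondecreasing. Interpolation of a monotone function on $[(j-1)/n,j/n]$ then errs by at most the increment over that cell, and conditional expectation preserves this. The contribution is therefore bounded by
\begin{align*}
\sum_j \frac1n\int_0^1\big[\partial_1C(s,\tfrac jn)-\partial_1C(s,\tfrac{j-1}n)\big]\,\de s=\frac1n\sum_j \frac1n = \frac1n,
\end{align*}
where I used $\int_0^1\partial_1 C(s,v)\,\de s=C(1,v)=v$.

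For the first piece, note that $\partial_1 C(\cdot,v)\in L^1([0,1])$ is bounded by $1$. The standard fact I will use is that averaging over a partition of mesh $1/n$ converges in $L^1$; this follows by density of continuous functions together with the $L^1$-contractivity of conditional expectation, or by Lebesgue differentiation. It gives $\int_0^1|E[f\mid\mathcal G_n]-f|\,\de u\to0$ for each fixed $v$. This pointwise-in-$v$ quantity is bounded by $2$, so dominated convergence in $v$ finishes the argument.

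The main obstacle is that $\mathcal G_n$ is not a filtration in $n$ (the partitions are not nested), so martingale convergence cannot be used. I will therefore rely on the density argument: approximate $f$ by a continuous $g$ with $\|f-g\|_1<\varepsilon$, use that $E[g\mid\mathcal G_n]\to g$ uniformly, and use contractivity for the remainder. Measurability of $(u,v)\mapsto\partial_1 C$ and the a.e. issues are handled via \eqref{rep_copula_derivative}.
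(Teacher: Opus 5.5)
Your proof is correct, and it takes a genuinely different route from the paper. The paper gives no argument of its own and deduces the lemma from \cite[Theorem 5]{Mikusinski-2010}; you prove it directly.

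Your key observation is that, on the strip $u\in S_i^n$, $\partial_1 C_n^\#[C](u,\cdot)$ is the piecewise linear interpolation (between grid points) of the strip averages
\[
g_i(v):=n\big[C(i/n,v)-C((i-1)/n,v)\big]=\E[\partial_1 C(\cdot,v)\mid\mathcal G_n](u).
\]
This yields the explicit bound
\[
d_1(C_n^\#[C],C)\le \frac1n+\int_0^1\big\|\E[\partial_1 C(\cdot,v)\mid\mathcal G_n]-\partial_1 C(\cdot,v)\big\|_{L^1(\de u)}\,\de v .
\]
The $1/n$ uses only that each $g_i$ is nondecreasing. That follows directly from $V_C\ge 0$ (2-increasingness), so you do not need the a.e.\ cdf property of $v\mapsto\partial_1 C(s,v)$. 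The second term vanishes by your density argument combined with dominated convergence in $v$. You are right that the partitions are not nested, so the martingale convergence theorem does not apply directly, and your workaround handles this correctly.

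What each approach buys:
\begin{itemize}
\item The citation is shorter and places the result in a framework that also covers the Bernstein and checkmin approximations mentioned in the paper.
\item Your route gives a quantitative statement in the spirit of the paper. Suppose $\pi_C$ has Lipschitz kernels with constant $L$. Using a Lipschitz version of $u\mapsto\pi_u$ and $\partial_1 C(s,v)=\pi_s([0,v])$ for a.e.\ $(s,v)$, the second term is at most $\int_0^1 n\int_{S^n_{i(u)}}\mathcal W(\pi_s,\pi_u)\,\de s\,\de u\le L/n$, where $i(u)$ is the index with $u\in S^n_{i(u)}$. Hence $d_1(C_n^\#[C],C)\le (1+L)/n$, a deterministic bias bound that the bare citation does not provide.
\end{itemize}
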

Similar approximations of copulas with respect to \(\partial_1\)-convergence can be obtained by so-called Bernstein and checkmin copulas; see \cite{Mikusinski-2010}. 
While Bernstein copula approximations are based on polynomial approximations, checkmin copulas are a variant of checkerboard copulas where mass is uniformly distributed over the diagonal of each subcube of the checkerboard. Since checkerboard copulas admit a Lebesgue density that is constant on each subcube, we will work for convenience with the checkerboard approximation in \eqref{def:check_cop}.

\subsection{Increasing distributions and rearrangements}

Both in OT and dependence modeling, a prominent role is played by stochastic orders; see \cite{Muller-2002} for an overview. For this article, the following positive/negative dependence concept will be of fundamental importance.

\begin{definition}[Stochastically increasing distributions]\mbox{}
\begin{enumerate}[label = (\roman*)]
    \item A law $\pi \in \mathcal{P}(\R^2)$ is called \emph{stochastically increasing} (SI) if
    \begin{align*}
        x\mapsto \pi_x([y, \infty)) \quad \text{ is increasing for all }y\in \R.
    \end{align*}
    \item A law $\pi\in \mathcal{P}(\R^2)$ is called \emph{stochastically decreasing} (SD), if $(-x,y)_{\#}\pi$ is SI.  
    \item A copula $C$ is SI/SD, if the coupling $\pi_C\in \Pi(\mathcal{U}, \mathcal{U})$ induced by $C$ is SI/SD.
\end{enumerate}
\end{definition}

In other words, if $\pi$ is SI, then $\pi_x$ (first-order) stochastically dominates $\pi_{\tilde x}$ for $x\ge \tilde x$. We denote by
\begin{align*}
    \mathcal{P}^\uparrow(\R^2)&:= \{ \pi \in \mathcal{P}(\R^2): \pi \text{ is stochastically increasing} \},\\
    \mathcal{C}^\uparrow&:= \{C \text{ is stochastically increasing} \}
\end{align*}
the set of SI laws and copulas, respectively. We also note that a copula is SI (SD) if and only if it is concave (convex) in its first component. This follows from the fact that \(u\mapsto \partial_1 C_\pi(u,v) = (\pi_C)_u([0,v]) = 1- (\pi_C)_u((v,1])\) is decreasing (increasing) for all \(v\in [0,1]\) whenever \(C\) is SI.

The importance of SI couplings for OT is summarized in the following lemma:
\begin{lemma}[{\cite[Corollary 2]{ruschendorf1985wasserstein}}]\label{lem_rep_IW_AW}
    Assume that \((X,Y)\sim \pi\) and \((\widetilde{X},\widetilde{Y})\sim \tilde{\pi}\) are both SI or SD. Then the Knothe-Rosenblatt coupling \eqref{eq:KR} is optimal for $\mathcal{CW}$ and $\mathcal{AW}$, that is, we have
    \begin{align}\label{eq:bw_si}
        \mathcal{CW}(\pi,\tilde{\pi}) = \int_{[0,1]^2} \big| F_{Y|X=F_{X}^{-1}(u)}^{-1}(v) - F_{\widetilde{Y}|\widetilde{X}=F_{\widetilde{X}}^{-1}(u)}^{-1}(v)\big| \, \de v\,\de u,
    \end{align}
    as well as 
    \begin{align}\label{eq:aw_si}
        \cAW(\pi,\tilde{\pi}) = \int_0^1 \big|F_{X}^{-1}(u) - F_{\tilde{X}}^{-1}(u)\big| + \int_0^1 \big| F_{Y|X=F_{X}^{-1}(u)}^{-1}(v) - F_{\tilde{Y}|\tilde{X}=F_{\widetilde{X}}^{-1}(u)}^{-1}(v)\big|\,\de v\, \de u .
    \end{align}
\end{lemma}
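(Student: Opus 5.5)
The plan is to reduce both optimization problems to couplings of two uniform distributions on $[0,1]$, and then show that the cost is of Monge (submodular) type. For such costs the comonotone coupling is optimal, by the classical result of Lorentz and Cambanis--Simons--Stout. Throughout, I assume finite first moments, so that all quantities are finite.

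\emph{Step 1 (reparametrization).} Every $\gamma\in\Pi(\pi_1,\tilde\pi_1)$ can be written as $\gamma=(F_X^{-1},F_{\widetilde X}^{-1})_\#\eta$ for some $\eta\in\Pi(\cU,\cU)$. If $\pi_1,\tilde\pi_1$ have atoms, I would use the randomized distributional transform $F_X(x-)+V(F_X(x)-F_X(x-))$. Conversely, every $\eta\in\Pi(\cU,\cU)$ induces such a $\gamma$. Since $F_X^{-1}(U)=X$ a.s. under the transform, the objectives in \eqref{eq:causal} and \eqref{IW_leq_AW} become
\begin{align*}
\inf_{\eta\in\Pi(\cU,\cU)}\int c(u,\tilde u)\,\eta(\de u,\de\tilde u),\qquad c(u,\tilde u)=\lambda\,|F_X^{-1}(u)-F_{\widetilde X}^{-1}(\tilde u)|+\int_0^1|a_v(u)-b_v(\tilde u)|\,\de v,
\end{align*}
where $\lambda\in\{0,1\}$ and $a_v(u):=F^{-1}_{Y|X=F_X^{-1}(u)}(v)$, $b_v(\tilde u):=F^{-1}_{\widetilde Y|\widetilde X=F_{\widetilde X}^{-1}(\tilde u)}(v)$. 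Here I use the quantile representation of $\cW$ on $\R$ from \eqref{eq:AW_simple}. The choice $\eta=(u,u)_\#\cU$ gives exactly the right-hand sides of \eqref{eq:bw_si} and \eqref{eq:aw_si}.

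\emph{Step 2 (Monge property).} If $\pi$ is SI, then the conditional laws increase in first-order stochastic order in $x$. Hence, after fixing an SI version of the kernel, which changes nothing up to null sets, $u\mapsto a_v(u)$ is increasing for every $v$, and likewise $b_v$. The map $(s,t)\mapsto|s-t|$ is of the form $\phi(s-t)$ with $\phi$ convex, so it is submodular. Composing with increasing functions in each variable preserves submodularity, i.e.
\begin{align*}
c(u,\tilde u)+c(u',\tilde u')\le c(u,\tilde u')+c(u',\tilde u) \quad\text{for } u\le u',\ \tilde u\le\tilde u'.
\end{align*}
Integrating in $v$ keeps this property. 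The term $|F_X^{-1}(u)-F_{\widetilde X}^{-1}(\tilde u)|$ is also submodular, since quantile functions are increasing. In the SD case both $a_v$ and $b_v$ are decreasing. Composing $\phi(s-t)$ with two decreasing maps is again submodular, since it equals composition with two increasing maps after replacing $\phi$ by $\phi(-\cdot)$, which is still convex. Note that the term $|F_X^{-1}-F_{\widetilde X}^{-1}|$ uses increasing maps in both cases, so it needs no separate treatment.

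\emph{Step 3 (conclusion).} For a submodular, measurable cost that is bounded below by an integrable function, the comonotone coupling $(u,u)_\#\cU$ minimizes over $\Pi(\cU,\cU)$. I would cite Cambanis--Simons--Stout here. Alternatively, I would approximate $\eta$ by discrete couplings and apply the uncrossing or rearrangement argument, passing to the limit via lower semicontinuity. This yields \eqref{eq:bw_si} and \eqref{eq:aw_si}.

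The main obstacle is the technical justification of Steps 1 and 3. In Step 1 the issue is the lifting in the presence of atoms of $\pi_1,\tilde\pi_1$: the kernels are only defined $\pi_1$-a.e., so $a_v$ must be taken constant on the flat pieces of $F_X^{-1}$. In Step 3 the issue is applying the Monge-type theorem to a cost that is merely measurable, not continuous. For this I would first prove optimality for bounded, truncated costs $\min\{c,M\}$, which remain submodular. I would then let $M\to\infty$ by monotone convergence.
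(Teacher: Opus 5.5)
The paper gives no proof of this lemma; it is quoted from \cite{ruschendorf1985wasserstein}. Your Steps~1--2 are sound:
\begin{itemize}
\item the distributional transform writes every $\gamma\in\Pi(\pi_1,\tilde\pi_1)$ as $(F_X^{-1},F_{\widetilde X}^{-1})_\#\eta$ with $\eta\in\Pi(\cU,\cU)$;
\item SI (resp.\ SD) versions of the kernels make $a_v,b_v$ increasing (resp.\ decreasing);
\item $\phi(f(u)-g(\tilde u))$ is submodular for convex $\phi$ and equally monotone $f,g$;
\item ``both SI or both SD'' is the right reading of the hypothesis, since the mixed case fails (cf.\ Example~\ref{ex:pseudo}).
\end{itemize}

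The gap is Step~3. The Cambanis--Simons--Stout theorem is stated for right-continuous quasi-monotone functions. Your cost is in general not right-continuous: $F_X^{-1}$ is left-continuous, and $u\mapsto a_v(u)$ may jump, with its values at the jumps fixed by the chosen version. So ``measurable, submodular, bounded below $\Rightarrow$ comonotone optimal'' is not something you can simply cite here, and neither of your repairs closes this.
\begin{itemize}
\item Truncation destroys submodularity. For $\psi(z)=\min\{|z|,M\}$ and $(s,s',t,t')=(0,M,-M,0)$ you get $\psi(s-t)+\psi(s'-t')=2M>M=\psi(s-t')+\psi(s'-t)$. Besides, truncation addresses unboundedness, not discontinuity.
\item In the discrete-approximation argument, lower semicontinuity only handles the comonotone side, $\liminf_n\int c\,\de\eta_n^{\mathrm{com}}\ge\int c\,\de\eta^{\mathrm{com}}$. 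On the other side you need $\limsup_n\int c\,\de\eta_n\le\int c\,\de\eta$, which lsc does not provide, and $c$ need not even be lsc.
\end{itemize}

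The cleanest fix is to bypass the Monge theorem entirely. Your cost is an integral of terms $|f(u)-g(\tilde u)|$ with $f,g$ monotone in the same direction. For fixed $v$, the measure $(a_v,b_v)_\#\eta$ couples $(a_v)_\#\cU$ and $(b_v)_\#\cU$. An increasing $a_v$ coincides $\de w$-a.e.\ with the quantile function of $(a_v)_\#\cU$; in the SD case use $w\mapsto a_v(1-w)$, which has the same image law. Hence
\begin{align*}
\int |a_v(u)-b_v(\tilde u)|\,\eta(\de u,\de\tilde u)\ \ge\ \cW\big((a_v)_\#\cU,(b_v)_\#\cU\big)=\int_0^1|a_v(w)-b_v(w)|\,\de w .
\end{align*}
Now integrate in $v$ (Tonelli applies, since $(u,v)\mapsto a_v(u)$ is jointly measurable). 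Treat $|F_X^{-1}(u)-F_{\widetilde X}^{-1}(\tilde u)|$ the same way and take the infimum over $\eta$. This yields \eqref{eq:bw_si} and \eqref{eq:aw_si} with no continuity or truncation argument. It is the same one-dimensional comonotonicity mechanism the paper uses, per $y$ via \eqref{ineq_cx}, in the proof of Proposition~\ref{prop:comparison}.

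If you prefer to keep Cambanis--Simons--Stout, first replace $c$ by its right-continuous regularization. Under SI (or SD), $\cW(\pi_x,\pi_{x'})=|m(x)-m(x')|$ with $m(x)=\int y\,\pi_x(\de y)$ monotone. Hence $c$ is continuous off countably many horizontal and vertical lines. These lines are null for every $\eta\in\Pi(\cU,\cU)$ and meet the diagonal in a null set. Pointwise limits of submodular functions remain submodular, so the regularized cost qualifies.
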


On the flip side, the importance of SI couplings for dependence modeling is highlighted by the following result: 
\begin{lemma}[{\cite[Proposition 3.6]{Siburg-2021}}]\label{lem:delta_1}
If $\pi^n, \pi \in \mathcal{P}_c(\R^2)$ are SI, then 
\begin{align*}
    d_1(C_{\pi^n},C_{\pi})\to 0 \quad \Leftrightarrow \quad \|C_{\pi^n}- C_{\pi}\|_\infty\to 0.
\end{align*}
\end{lemma}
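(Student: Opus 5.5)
The aim is to show that for SI copulas, uniform convergence forces $\partial_1$-convergence; the converse direction is already in hand. Write $C_n := C_{\pi^n}$ and $C := C_\pi$. Since $\pi^n,\pi\in\mathcal{P}_c(\R^2)$ are SI, each $u\mapsto C_n(u,v)$ and $u\mapsto C(u,v)$ is concave for every fixed $v$; SI transfers to the copula because $F_X$ is continuous and increasing. The implication ``$\Rightarrow$'' follows from \eqref{eq:l1_d1} together with the uniform Lipschitz property of copulas, as already discussed after \eqref{eq:l1_d1}. So only ``$\Leftarrow$'' needs work, and the plan is to use a classical fact about concave functions: if concave functions converge pointwise on an open interval, then their derivatives converge at every point where the limit is differentiable.

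Fix $v\in[0,1]$ and set $f_n := C_n(\cdot,v)$ and $f := C(\cdot,v)$ on $(0,1)$. These are concave, and $f_n\to f$ uniformly by assumption. For concave functions, the difference quotients are monotone. Hence for $0<h$ small and $u\in(h,1-h)$,
\begin{align*}
\frac{f_n(u+h)-f_n(u)}{h}\le \partial_1^+ f_n(u)\le \partial_1^- f_n(u)\le \frac{f_n(u)-f_n(u-h)}{h}.
\end{align*}
Let $n\to\infty$ first and then $h\downarrow 0$. This gives $\partial_1^+ f(u)\le\liminf_n \partial_1^\pm f_n(u)\le\limsup_n\partial_1^\pm f_n(u)\le \partial_1^- f(u)$. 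At every $u$ where $f$ is differentiable, which is all but countably many $u$, this yields $\partial_1 C_n(u,v)\to\partial_1 C(u,v)$. The statement holds whichever version of the a.e.-defined derivative is used, since any version lies between the one-sided derivatives.

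For each $v$, this convergence holds for Lebesgue-a.e.\ $u$. Measurability in $(u,v)$ of the difference quotients allows Fubini, so we get convergence $(\mathcal{U}\otimes\mathcal{U})$-a.e. Finally, $0\le\partial_1 C_n\le 1$ because copulas are $1$-Lipschitz. Dominated convergence on $[0,1]^2$ then gives $d_1(C_n,C)\to 0$.

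The main obstacle is bookkeeping rather than substance. First, one has to choose versions of $\partial_1 C_n$ consistently so that the a.e.\ statements are legitimate. Second, one has to justify that concavity of $u\mapsto C_n(u,v)$ holds for every $v$, and not merely a.e. This follows by continuity in $v$ from the statement for a dense set of $v$, or directly from the monotonicity of $u\mapsto\pi_u([0,v])$. With these points settled, the monotone difference-quotient sandwich does the real work.
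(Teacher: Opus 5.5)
Your proof is correct. The paper gives no argument of its own for this lemma; it imports the result from \cite{Siburg-2021}. Your proposal therefore supplies a self-contained proof along the standard route.

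For ``$\Rightarrow$'' you use \eqref{eq:l1_d1} together with the equi-Lipschitz property of copulas, which needs no SI assumption. For ``$\Leftarrow$'' you use that pointwise convergence of the concave functions $u\mapsto C_{\pi^n}(u,v)$ forces their one-sided derivatives to converge at every point of differentiability of the limit. This holds off a countable set for each $v$, and Tonelli plus dominated convergence with bound $1$ then finish the argument.

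The bookkeeping you flag is handled most cleanly with the right derivative
\begin{align*}
\partial_1^+C(u,v)=\lim_{k\to\infty} k\,\big(C(u+1/k,v)-C(u,v)\big).
\end{align*}
By concavity it exists everywhere on $[0,1)\times[0,1]$. It is Borel as a pointwise limit of continuous functions, and it agrees with $\partial_1 C$ almost everywhere. The set where $\partial_1^+C_{\pi^n}\not\to\partial_1^+C_\pi$ is then Borel with countable $v$-sections, hence $(\cU\otimes\cU)$-null.

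On the transfer of SI to the copula: the precise reason is that $u\mapsto (F_Y)_\#\pi_{F_X^{-1}(u)}$ is a version of the kernel of $C_\pi$ for all $u\in(0,1)$. It is stochastically monotone because $F_X^{-1}$ and $F_Y$ are increasing.

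Compared with the citation, your route also gives a slightly sharper statement. A pointwise limit of functions concave in $u$ is again concave in $u$. So for ``$\Leftarrow$'' only the $\pi^n$ need to be SI, and the SI assumption on the limit $\pi$ is redundant.
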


There are many ways to project a coupling $\pi\in\mathcal{P}(\R^2)$ onto $\cP^\uparrow(\R^2)$ without changing the marginals of $\pi.$ We highlight here one specific such projection, which goes back to \cite[Proposition 3.17]{Ansari-Rueschendorf-2021} (see also \cite{strothmann2022}), and will be particularly useful for our purposes.

\begin{definition}[Increasing rearrangement]\label{def:inc_rea}
\mbox{}
\begin{enumerate}[label = (\roman*)]
    \item For $\pi\in \mathcal{P}(\R^2)$, we denote by \(\pi^\uparrow \in \mathcal{P}(\R^2)\) the unique element of $\mathcal{P}^\uparrow(\R^2)$ satisfying $X\stackrel{d}{=} X^\uparrow$ and
\begin{align*}
    F_{Y|X}(y) \stackrel{d}{=} F_{Y^{\uparrow}|X^{\uparrow}}(y) \qquad \text{for all }y\in \R,\text{ where }(X,Y)\sim \pi, (X^\uparrow, Y^\uparrow)\sim \pi^\uparrow. 
\end{align*}
We call $\pi^\uparrow$ the \emph{increasing rearrangement} of $\pi.$
\item  We define the increasing rearrangement $C^\uparrow$ of a copula $C$ as the cdf of the increasing rearrangement of the distribution $\pi\in \Pi(\mathcal{U},\mathcal{U})$ induced by $C$.
\end{enumerate}
\end{definition}

It turns out that increasing rearrangements of couplings and copulas commute in the following sense.
\begin{lemma}\label{lem:commute}
 For \(\pi\in\cP_c(\R^2)\) we have
 \begin{align}
     C_{\pi^\uparrow} = C_\pi^\uparrow.
 \end{align}
\end{lemma}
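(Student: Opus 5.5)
The plan is to reduce everything to the uniform-marginal coupling induced by \(C_\pi\) via the probability integral transform, and then check that the rearrangement commutes with this transform.

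Let \((X,Y)\sim\pi\in\cP_c(\R^2)\). Continuity of \(F_\pi\) means the marginal cdfs \(F_X,F_Y\) are continuous. Hence \((U,V):=(F_X(X),F_Y(Y))\) has law \(\pi_{C_\pi}\in\Pi(\cU,\cU)\), and \(X=F_X^{-1}(U)\), \(Y=F_Y^{-1}(V)\) almost surely. The same holds for \(\pi^\uparrow\). By Definition \ref{def:inc_rea}, \(X^\uparrow\eqd X\), and for each \(y\) we have \(F_{Y^\uparrow|X^\uparrow}(y)\eqd F_{Y|X}(y)\). Integrating over \(X\) gives \(F_{Y^\uparrow}(y)=\E[F_{Y^\uparrow|X^\uparrow}(y)]=\E[F_{Y|X}(y)]=F_Y(y)\). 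So \(\pi^\uparrow\) has the same continuous marginals as \(\pi\), lies in \(\cP_c(\R^2)\), and \(C_{\pi^\uparrow}\) is well defined. It is the cdf of \((U^\uparrow,V^\uparrow):=(F_X(X^\uparrow),F_Y(Y^\uparrow))\).

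It then suffices to show two things:
\begin{enumerate}[label=(\alph*)]
\item the law of \((U^\uparrow,V^\uparrow)\) is SI;
\item the law of \((U^\uparrow,V^\uparrow)\) satisfies the defining property of the increasing rearrangement of \(\pi_{C_\pi}\), namely \(U^\uparrow\eqd U\) and \(F_{V^\uparrow|U^\uparrow}(v)\eqd F_{V|U}(v)\) for all \(v\).
\end{enumerate}
Uniqueness in Definition \ref{def:inc_rea} then yields \((\pi_{C_\pi})^\uparrow=\mathrm{law}(U^\uparrow,V^\uparrow)\), that is, \(C_\pi^\uparrow=C_{\pi^\uparrow}\).

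For the conditional distributions, note that \(x\mapsto F_X(x)\) is increasing and \(F_X^{-1}(F_X(X))=X\) almost surely. So conditioning on \(U=u\) is the same as conditioning on \(X=F_X^{-1}(u)\), and \(\{V\le v\}=\{Y\le F_Y^{-1}(v)\}\) up to null sets, using continuity of \(F_Y\). This gives
\[
F_{V|U=u}(v)=F_{Y|X=F_X^{-1}(u)}(F_Y^{-1}(v))\quad\text{for a.e. }u,
\]
and the analogous identity holds for \((U^\uparrow,V^\uparrow)\), with the same \(F_X,F_Y\). Consequently \(F_{V|U}(v)=F_{Y|X}(F_Y^{-1}(v))\) as random variables. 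The distributional identity in (b) then follows from the one for \(\pi,\pi^\uparrow\) evaluated at \(y=F_Y^{-1}(v)\). The first part of (b), \(U^\uparrow\eqd U\), is immediate from \(X^\uparrow\eqd X\).

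For (a), the same formula shows that \(u\mapsto (\pi_{C_{\pi^\uparrow}})_u([0,v])=\pi^\uparrow_{F_X^{-1}(u)}((-\infty,F_Y^{-1}(v)])\) is a composition of the decreasing map \(x\mapsto\pi^\uparrow_x((-\infty,y])\) with the increasing map \(F_X^{-1}\). It is therefore decreasing, so the law of \((U^\uparrow,V^\uparrow)\) is SI.

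The main obstacle is rigor with versions of the conditional distributions. Disintegrations are defined only \(\pi_1\)-a.e., and the SI property must hold for a suitable version. Points \(x\) lying in flat pieces of \(F_X\) are harmless, since they carry no mass. Equalities at fixed \(v\) should be stated for all \(v\) in a countable dense set and then extended by right-continuity. I would handle this by choosing a version of \(x\mapsto\pi^\uparrow_x\) that is monotone everywhere, and defining the version for \(U\) by composing it with \(F_X^{-1}\).
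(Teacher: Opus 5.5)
Your proof is correct and follows the same idea as the paper, whose entire argument is that $\pi$ and $\pi^\uparrow$ have the same (continuous) marginals, so the claim follows from Sklar's theorem \eqref{eq:sklar}. Your version expands that line: you prove that the marginals coincide, and you make explicit the step the paper leaves implicit — that $(F_X,F_Y)_\#\pi^\uparrow$ is SI and satisfies the defining property of $(\pi_{C_\pi})^\uparrow$, so that the uniqueness in Definition~\ref{def:inc_rea} identifies the two copulas.
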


\begin{proof}
Note that $\pi$ and $\pi^\uparrow$ have the same marginals. Then the claim follows from \eqref{eq:sklar}.
\end{proof}

We use the following properties of increasing rearranged copulas.

\begin{lemma}\label{lem_char_incRearr}
    For \(\pi\in \cP_c(\R^2)\), we have:
    \begin{enumerate}[label = (\roman*)]
        \item \label{lem_char_incRearr1} Characterization of independence: \(C_\pi^\uparrow = C^\perp\) if and only if \(\pi = \pi_1\otimes \pi_2\),
        \item \label{lem_char_incRearr2} Characterization of perfect dependence: \(C_\pi^\uparrow = C^+\) if and only if \(\pi = (x,f(x))_\#\pi_1\) for some measurable function \(f: \R\to \R\),
        \item \label{lem_char_incRearr3} Positive dependence: \(C_\pi^\uparrow\) is SI and thus \(C^\perp(u,v) \leq C_\pi^\uparrow(u,v) \leq C^+(u,v)\) for all \((u,v)\in [0,1]^2\).
    \end{enumerate}
\end{lemma}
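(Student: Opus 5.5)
The plan is to reduce everything to the uniform-marginal coupling $\pi_C$ induced by $C:=C_\pi$, and then to describe $\pi^\uparrow$ explicitly through conditional distribution functions. By Lemma \ref{lem:commute} we have $C_\pi^\uparrow = C_{\pi^\uparrow}$, and by Sklar's theorem \eqref{eq:sklar} $\pi$ is the pushforward of $\pi_C$ under $(F_X^{-1},F_Y^{-1})$. Independence and functional dependence are preserved under this marginal transformation, since $F_X,F_Y$ are continuous and $F_X^{-1}\circ F_X=\mathrm{id}$ holds $\pi_1$-a.s. So it suffices to prove (i)--(iii) for $\pi=\pi_C\in\Pi(\cU,\cU)$. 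In this setting $\partial_1 C(u,v)=\pi_u([0,v])$ by \eqref{rep_copula_derivative}. The increasing rearrangement is then the copula whose first partial derivative $u\mapsto\partial_1 C^\uparrow(u,v)$ is the decreasing rearrangement, on $[0,1]$, of $u\mapsto\partial_1 C(u,v)$. This is exactly Definition \ref{def:inc_rea}: equal distribution of $F_{Y|X}(v)$ under $\cU$ together with the SI property.

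\textbf{Part (iii).} By construction $u\mapsto\partial_1 C^\uparrow(u,v)$ is decreasing, so $C^\uparrow$ is concave in $u$, i.e.\ SI. Concavity together with $C^\uparrow(0,v)=0$ and $C^\uparrow(1,v)=v$ gives $C^\uparrow(u,v)\ge uv$. The upper bound $C^\uparrow\le C^+$ is the Fréchet bound, valid for every copula.

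\textbf{Part (i).} If $\pi=\pi_1\otimes\pi_2$, then $\partial_1 C(u,v)=v$ is constant in $u$. Its decreasing rearrangement is again $v$, so $C^\uparrow=C^\perp$. Conversely, if $C^\uparrow=C^\perp$, then the decreasing rearrangement of $\partial_1C(\cdot,v)$ is a.e.\ equal to $v$. Equimeasurability then forces $\partial_1C(u,v)=v$ for a.e.\ $u$, for each $v$. Hence $C=C^\perp$, i.e.\ independence.

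\textbf{Part (ii).} Here I would use the characterization that $Y=f(X)$ a.s.\ if and only if, for every $v$, $\pi_u([0,v])\in\{0,1\}$ for a.e.\ $u$. The conditional laws are then Dirac measures; take countably many rational $v$ and use right-continuity to recover $f$.
\begin{itemize}
\item If $\partial_1C(\cdot,v)$ is $\{0,1\}$-valued with integral $v$, its decreasing rearrangement is $\1_{[0,v]}(u)$. This is $\partial_1 C^+(u,v)$, so $C^\uparrow=C^+$.
\item Conversely, $C^\uparrow=C^+$ means the decreasing rearrangement equals $\1_{\{u\le v\}}$. Equimeasurability gives $\partial_1C(\cdot,v)\in\{0,1\}$ a.e., hence functional dependence.
\end{itemize}

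The main obstacle is making the identification of $\pi^\uparrow$ with the pointwise decreasing rearrangement rigorous. Two points need care. First, rearranging separately for each $v$ must yield a genuine copula: the rearranged function must be increasing in $v$ and yield a measurable kernel. Second, the uniqueness in Definition \ref{def:inc_rea} has to be invoked. I would take this from \cite[Proposition 3.17]{Ansari-Rueschendorf-2021} rather than re-derive it. The remaining measure-theoretic step, extracting $f$ from Dirac kernels, is routine.
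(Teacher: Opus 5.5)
Your proposal is correct and follows essentially the same route as the paper. In both, the conditional distribution functions of $\pi^\uparrow$ (equivalently $u\mapsto\partial_1 C^\uparrow(u,v)$) are identified as the decreasing rearrangement of those of $\pi$. Parts (i) and (ii) then follow from equimeasurability together with the $\{0,1\}$-characterization of functional dependence, which the paper cites from \cite{siburg2013} and you sketch. Part (iii) comes from monotonicity of $\partial_1 C^\uparrow(\cdot,v)$ plus the Fr\'echet bound. Your explicit reduction to $\Pi(\cU,\cU)$, and the remark that a decreasing $\{0,1\}$-valued function with integral $v$ must equal $\1_{[0,v]}$ a.e., only make explicit what the paper states as ``the only SI copula with this property is $C^+$.''
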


We provide a proof of this essentially well-known result in the appendix.

\section{Comparisons of metrics and topologies}\label{sec:comparison}

In this section we compare the metrics and induced topologies introduced in Section \ref{sec:prelim}. We restrict our analysis to the sets  $\Pi(\mathcal{U}, \mathcal{U})\subset \mathcal{P}_c(\R^2)$ and $ \Pi^\uparrow(\mathcal{U}, \mathcal{U})=\Pi(\mathcal{U}, \mathcal{U})\cap \mathcal{P}^\uparrow(\R^2)$, on which we have a direct correspondence between copulas and couplings.

\subsection{Comparisons on $\Pi(\mathcal{U}, \mathcal{U})$}

Throughout this section, we use the identification of $\pi\in\Pi(\mathcal{U}, \mathcal{U})$ with its copula $C_\pi$, introduced in Section \ref{sec:prelim}. We first recall a basic fact: 

\begin{lemma}\label{lem:w1}
For any $\pi, \pi^n\in \Pi(\mathcal{U}, \mathcal{U})$ we have
\begin{align*}
    \mathcal{W}(\pi^n, \pi)\to 0 \quad \Leftrightarrow \quad C_{\pi_n} \to C_\pi \text{ pointwise } \quad \Leftrightarrow \quad  \|C_{\pi^n} - C_\pi\|_\infty \to 0.
\end{align*}
\end{lemma}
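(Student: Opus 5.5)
We prove the chain in the order: $\mathcal{W}$-convergence implies pointwise convergence, pointwise implies uniform convergence, and uniform convergence implies $\mathcal{W}$-convergence.

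The middle equivalence was already noted in Section~\ref{sec:copulas}. Uniform convergence trivially gives pointwise convergence. For the converse we use that all copulas are uniformly Lipschitz, $|C(u,v)-C(\tilde u,\tilde v)|\le |u-\tilde u|+|v-\tilde v|$. Pointwise convergence on a finite grid of mesh $1/m$ then gives $\limsup_n\|C_{\pi^n}-C_\pi\|_\infty\le 2/m$, and $m$ is arbitrary. This is a standard equicontinuity (Arzel\`a--Ascoli type) argument.

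\emph{From $\mathcal{W}$ to pointwise convergence.} All measures live on the compact set $[0,1]^2$, where $\mathcal{W}$ metrizes weak convergence. So $\mathcal{W}(\pi^n,\pi)\to0$ implies $\pi^n\to\pi$ weakly. By the portmanteau theorem, $\pi^n([0,u]\times[0,v])\to\pi([0,u]\times[0,v])$ for every $(u,v)$ whose rectangle boundary is a $\pi$-null set. Since $\pi$ has uniform (atomless) marginals, the boundary of $[0,u]\times[0,v]$ is contained in $(\{u\}\times[0,1])\cup([0,1]\times\{v\})$ up to the edges on the axes. This set has $\pi$-measure at most $\mathcal{U}(\{u\})+\mathcal{U}(\{v\})=0$. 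Hence $C_{\pi^n}(u,v)\to C_\pi(u,v)$ for \emph{all} $(u,v)\in[0,1]^2$.

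\emph{From uniform convergence back to $\mathcal{W}$.} Uniform convergence of the distribution functions implies $\pi^n([0,u]\times[0,v])\to\pi([0,u]\times[0,v])$ for every rectangle. By inclusion--exclusion, this extends to all half-open boxes, hence to finite unions of grid boxes. Weak convergence then follows: a continuous test function on $[0,1]^2$ can be uniformly approximated by step functions on dyadic grids, so convergence of box masses suffices. Alternatively, one can cite the classical fact that pointwise convergence of cdfs at continuity points of the limit is equivalent to weak convergence in $\R^2$; here every point is a continuity point of $C_\pi$. Finally, on the compact space $[0,1]^2$, weak convergence is equivalent to $\mathcal{W}$-convergence (e.g.\ \cite[Theorem 6.9]{Villani-2009}).

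The only mildly delicate point is the boundary/continuity-point argument in the first implication. It is resolved by the atomlessness of the uniform marginals, so no real obstacle arises; everything else is standard.
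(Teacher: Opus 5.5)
Your proof is correct and follows essentially the same route as the paper: reduce $\mathcal{W}$-convergence to weak convergence on the compact square $[0,1]^2$, then use that the limit $C_\pi$ is continuous because the marginals are atomless (the paper gets weak $\Rightarrow$ uniform from the P\'olya-type Lemma~\ref{lem:polya}). The differences are minor: you derive pointwise $\Rightarrow$ uniform from the uniform Lipschitz property of copulas, and you spell out the converse (uniform $\Rightarrow$ weak via box masses), which the paper leaves implicit.
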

\begin{proof}
    First note that $\Pi(\mathcal{U}, \mathcal{U})\subset \mathcal{P}([0,1]^2)$, so Wasserstein convergence is the same as weak convergence; see \cite[Cor. 6.13]{Villani-2009}. As all couplings have uniform marginals, the claim follows from Lemma \ref{lem:polya}.
\end{proof}
In addition to \eqref{eq: general_inequalities}, we can then compare $\mathcal{KR}$ and $d_1$ on $\Pi(\mathcal{U}, \mathcal{U})$ as follows:
\begin{lemma}\label{lem:KR}
For any $\pi, \tilde \pi\in \Pi(\mathcal{U}, \mathcal{U})$ we have
\begin{align}\label{eq: W vs d_1}
    \mathcal{KR}(\pi, \tilde \pi)=  d_1(C_\pi, C_{\tilde\pi}).
\end{align}
\end{lemma}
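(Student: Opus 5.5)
Since $\pi$ and $\tilde\pi$ both have uniform marginals on $[0,1]$, we have $F_X^{-1}(u)=F_{\widetilde X}^{-1}(u)=u$ for $u\in(0,1)$. The first term in the definition \eqref{eq:KR} of $\mathcal{KR}$ therefore vanishes. The remaining term reduces to
\begin{align*}
\mathcal{KR}(\pi,\tilde\pi)=\int_0^1\int_0^1 \big|F^{-1}_{Y|X=u}(v)-F^{-1}_{\widetilde Y|\widetilde X=u}(v)\big|\,\de v\,\de u .
\end{align*}
The plan is to rewrite the inner integral as an $L^1$ distance between conditional cdfs and then identify these cdfs with the partial derivatives of the copulas.

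\textbf{Step 1.} For fixed $u$, the inner integral equals $\mathcal{W}(\pi_u,\tilde\pi_u)$, by the quantile representation of the one-dimensional Wasserstein distance. By Lemma \ref{lem:L1_integral} (the same identity used for the second equality in \eqref{eq:AW_simple}), it also equals
\begin{align*}
\int_{\R}\big|F_{Y|X=u}(y)-F_{\widetilde Y|\widetilde X=u}(y)\big|\,\de y .
\end{align*}
Since both conditional laws are supported in $[0,1]$ for a.e.\ $u$ (the second marginal is $\mathcal{U}$), the integrand vanishes outside $[0,1]$. So the $y$-integral is over $[0,1]$, and we write $v$ for $y$.

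\textbf{Step 2.} By \eqref{rep_copula_derivative}, $F_{Y|X=u}(v)=\pi_u([0,v])=\partial_1 C_\pi(u,v)$ for $(\mathcal{U}\otimes\mathcal{U})$-a.e.\ $(u,v)$, and likewise for $\tilde\pi$. Integrating over $u$ then gives $\int_0^1\int_0^1|\partial_1C_\pi(u,v)-\partial_1C_{\tilde\pi}(u,v)|\,\de v\,\de u=d_1(C_\pi,C_{\tilde\pi})$.

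\textbf{Main obstacle.} The delicate point is measure-theoretic bookkeeping with null sets. The identity \eqref{rep_copula_derivative} holds only off a $\mathcal{U}\otimes\mathcal{U}$-null set, whereas Step 1 is applied for each fixed $u$ with a full cdf in $y$. By Fubini, for a.e.\ $u$ the identity holds for a.e.\ $v$. Changing an integrand on a null set of $v$ does not affect $\int_0^1|\cdot|\,\de v$, so the two double integrals agree.

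One must also check that the kernel version in \eqref{eq:KR} is only determined $\pi_1$-a.e. Since $\pi_1=\mathcal{U}$, this is harmless. Joint measurability of $(u,v)\mapsto F^{-1}_{Y|X=u}(v)$ follows from measurability of the kernel, which justifies the use of Fubini.
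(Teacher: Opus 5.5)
Your proof is correct and follows the same route as the paper's: the common uniform first marginal kills the first term of $\mathcal{KR}$, Lemma \ref{lem:L1_integral} turns the conditional-quantile integral into an $L^1$ distance of conditional cdfs, and \eqref{rep_copula_derivative} identifies these with $\partial_1 C_\pi$ and $\partial_1 C_{\tilde\pi}$. Your null-set and measurability remarks only make explicit what the paper's three-line computation leaves implicit.
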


\begin{proof}
As $\pi_1=\tilde \pi_1$ we have for $(X,Y)\sim \pi,$ $(\widetilde X, \widetilde Y)\sim \tilde \pi$ that
\begin{align*}
    \mathcal{KR}(\pi, \tilde\pi)&= \int_{[0,1]^2} |F_{Y|{X=u}}^{-1}(v) - F_{\widetilde Y|\widetilde X=u}^{-1}(v)|\,\de v \,\de u\\
    &=  \int_{[0,1]^2} |F_{Y|{X=u}}(v) - F_{\widetilde Y|\widetilde X=u}(v)|\,\de v \,\de u\\
    &\stackrel{\eqref{eq:d1}}{=}\int_{[0,1]^2} |\partial_1 C_{\pi}(u,v) - \partial_1 C_{\tilde{\pi}}(u,v)| \,\de v\, \de u = d_1(C_{\pi},C_{\tilde{\pi}}),
\end{align*}
where the second equality holds due to Lemma \ref{lem:L1_integral}.
\end{proof}
From this we directly conclude that the weak adapted, Knothe-Rosenblatt and $\partial_1$-topology are equal on $\Pi(\mathcal{U}, \mathcal{U}).$
\begin{corollary}\label{cor:d1}
For any $\pi^n,\pi\in \Pi(\mathcal{U}, \mathcal{U})$ we have 
\begin{align*}
    \mathcal{AW}(\pi^n, \pi)\to 0 \quad \Leftrightarrow \quad \mathcal{KR}(\pi^n, \pi)\to 0 \quad \Leftrightarrow \quad d_1(C_{\pi^n},C_\pi)\to 0. 
\end{align*}
\end{corollary}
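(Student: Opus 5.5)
The plan is to chain two results already established. The second equivalence, $\mathcal{KR}(\pi^n,\pi)\to 0 \Leftrightarrow d_1(C_{\pi^n},C_\pi)\to 0$, is immediate from Lemma \ref{lem:KR}. Since all $\pi^n,\pi$ lie in $\Pi(\mathcal{U},\mathcal{U})$, that lemma gives the identity $\mathcal{KR}(\pi^n,\pi)=d_1(C_{\pi^n},C_\pi)$ for every $n$. The two sequences of real numbers therefore coincide, and one tends to zero if and only if the other does.

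For the first equivalence there are two directions. The direction $\mathcal{KR}(\pi^n,\pi)\to 0 \Rightarrow \mathcal{AW}(\pi^n,\pi)\to 0$ is trivial from the pointwise inequality $\mathcal{AW}\le\mathcal{KR}$ recorded in \eqref{eq: general_inequalities}. This inequality holds because $\mathcal{KR}$ is the value of the $\mathcal{AW}$ objective at one particular coupling, namely the quantile coupling of the first marginals. The converse direction $\mathcal{AW}(\pi^n,\pi)\to 0\Rightarrow \mathcal{KR}(\pi^n,\pi)\to 0$ is exactly the content of Lemma \ref{lem:beiglboeck-pammer}, i.e.\ \cite[Theorem 1.4]{Beiglboeck-Pammer-Posch-2023}. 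That lemma applies to arbitrary sequences in $\mathcal{P}(\R^2)$, so in particular it applies to elements of $\Pi(\mathcal{U},\mathcal{U})$.

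The only genuinely nontrivial ingredient is the implication from $\mathcal{AW}$-convergence to $\mathcal{KR}$-convergence. We simply invoke it. If one wanted a self-contained argument on $\Pi(\mathcal{U},\mathcal{U})$, one would first note that all first marginals equal $\mathcal{U}$, so the $\mathcal{KR}$ coupling is the identity coupling and $\mathcal{KR}(\pi^n,\pi)=\int_0^1\mathcal{W}(\pi^n_u,\pi_u)\,\de u$. A near-optimal $\mathcal{AW}$ coupling $\gamma^n\in\Pi(\mathcal{U},\mathcal{U})$ concentrates near the diagonal, since $\int|u-\tilde u|\,\de\gamma^n\to 0$. One would then need to control $\int \mathcal{W}(\pi_{\tilde u},\pi_u)\,\gamma^n(\de u,\de\tilde u)\to 0$. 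This would follow from a Lusin-type approximation of the kernel $u\mapsto\pi_u$ by a continuous map into $(\mathcal{P}([0,1]),\mathcal{W})$, combined with the triangle inequality. That is the step I expect to be the main obstacle. Since the result is available as Lemma \ref{lem:beiglboeck-pammer}, I would cite it rather than reprove it.
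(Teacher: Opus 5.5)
Your proposal is correct and follows the paper's proof: the paper also takes the first equivalence from Lemma \ref{lem:beiglboeck-pammer} and the second from the identity $\mathcal{KR}=d_1$ of Lemma \ref{lem:KR}. Your optional self-contained sketch of $\mathcal{AW}\to 0\Rightarrow\mathcal{KR}\to 0$ would also go through. Because every first marginal equals $\mathcal{U}$, a single continuous approximation of $u\mapsto\pi_u$ in $L^1(\mathcal{U})$ suffices, and its uniform continuity on $[0,1]$ handles the couplings $\gamma^n$ concentrating on the diagonal.
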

\begin{proof}
    The first equivalence was stated in Lemma \ref{lem:beiglboeck-pammer}. The second equivalence follows from Lemma \ref{lem:KR}.
\end{proof}

While $ \mathcal{AW},  \mathcal{KR}$ and $d_1$ are thus compatible, the situation is different for $\mathcal{AW}, \mathcal{W}$, and $ \mathcal{CW}$. As we will see below, each one of these distances generates a different topology. In fact, for any $\pi^n,\pi \in \Pi(\mathcal{U}, \mathcal{U})$, we have
\begin{align*}
    \mathcal{AW}(\pi^n, \pi) \to 0 \quad \Rightarrow \quad \mathcal{W}(\pi^n, \pi)\to 0,\,\mathcal{CW}(\pi^n, \pi)\to 0
\end{align*}
by \eqref{eq: general_inequalities}. However, there exist $\pi^n,\pi \in \Pi(\mathcal{U}, \mathcal{U})$ such that
\begin{align*}
    \mathcal{CW}(\pi^n, \pi)\to 0 \quad \text{but} \quad \mathcal{AW}(\pi^n, \pi)\not\to 0, \,\mathcal{W}(\pi^n, \pi)\not\to 0,
\end{align*}
as the following example shows.

\begin{example} Recall the measures $\pi, \tilde \pi \in \Pi(\mathcal{U}, \mathcal{U})$ from Example \ref{ex:pseudo} and set $\pi^n=\tilde \pi$ for all $n\in \N.$ Following the same arguments as in Example \ref{ex:pseudo} we have $\mathcal{CW}(\pi^n, \pi) =0$ for all \(n\in \N\). But $\pi^n=\tilde \pi \neq \pi$, and as $\mathcal{W}$, $\mathcal{AW}$ are metrics, we conclude $\mathcal{AW}(\pi^n,\pi) \not\to 0, $ $\mathcal{W}(\pi^n, \pi)\not\to 0.$
\end{example}
Furthermore, there exist $\pi^n,\pi \in \Pi(\mathcal{U}, \mathcal{U})$ such that
\begin{align*}
     \mathcal{W}(\pi^n, \pi)\to 0 \quad \text{but}  \quad \mathcal{CW}(\pi^n, \pi)\not\to 0, \,\mathcal{AW}(\pi^n, \pi) \not\to 0.
\end{align*}
as the following classical example shows.

\begin{example}\label{ex:shuffle-of-min}
For $\pi^n= (u,nu-\lfloor nu \rfloor)_{\#}\mathcal{U}$ and $\pi=\mathcal{U}\otimes \mathcal{U}\in \Pi(\mathcal{U}, \mathcal{U})$, we compute 
\begin{align*}
\mathcal{CW}(\pi^n, \pi)&=\inf_{\gamma\in \Pi(\cU,\cU)} \int_{[0,1]^2} \cW(\pi^n_u,\pi_{\tilde{u}})\, \gamma(\de u,\de \tilde{u})  =  \int_{[0,1]} \cW(\pi^n_u,\mathcal{U}) \, \de u =\int_{[0,1]^2} |u-\tilde u| \, \de u\, \de \tilde u =1/3\neq 0
\end{align*}
for all $n\in \N.$
On the other hand, $\mathcal{W}(\pi^n, \pi) \to 0,$
as for any bounded, continuous function $f\colon \R^2\to \R$, 
\begin{align*}
\int_{[0,1]^2} f(u, \tilde u)\,\pi^n(\de u,\de \tilde u) = \int_{[0,1]} f(u,nu-\lfloor nu \rfloor)\, \de u \to \int_{[0,1]^2} f(u,\tilde u)\,\de u \,\de \tilde u.
\end{align*}
On the level of copulas, the above construction corresponds to the concept of gluing copulas \cite{Siburg-2008}. Related constructions are based on shuffle-of-min copulas which are dense in the set of copulas with respect to uniform convergence \cite{Kimeldorf-1978,Mikusinski-Sherwood-Taylor-1992}.
\end{example}

We summarize the situation in Figure \ref{fig:uu} below. 

\begin{figure}[h!] 
\begin{tikzpicture}[
    font=\large,
    line cap=round,
    line join=round,
    >=Latex,
    node distance=1cm,
    dbl/.style={
        draw=black,
        double=black,
        double distance=0.9pt,
        line width=0.45pt
    },
    niceleft/.style={
        double,
        line width=0.4pt,
        line cap=round,
  line join=round,
         {Latex[open,length=1.5mm,width=1.5mm]}-
    },
    niceright/.style={
        double,
        line width=0.4pt,
        line cap=round,
  line join=round,
         -{Latex[open,length=1.5mm,width=1.5mm]}
    },
    niceboth/.style={
        double,
        line width=0.4pt,
  line cap=round,
  line join=round,
  {Latex[open,length=1.5mm,width=1.5mm]}-{Latex[open,length=1.5mm,width=1.5mm]}
    }]

% Main nodes
\node (AW) {$\mathcal{AW}(\pi^n, \pi)\to 0$};
\node (KR) [right=0.75cm of AW] {$\mathcal{KR}(\pi^n, \pi)\to 0$};
\node (d1) [right=0.75cm of KR] {$d_1(C_{\pi^n}, C_\pi)\to 0$};

% W and bold W positioned slightly above/below and to the right of AW
\node (W)  [above=0.5cm of AW] {$\mathcal{W}(\pi^n, \pi)\to 0$};
\node (bW) [below=0.5cm of AW] {$\mathcal{CW}(\pi^n, \pi)\to 0$};
\node (U) [left=0.5cm of W] {$\|C_{\pi^n}-C_\pi\|_\infty \to 0$};

% Relations
\draw[niceleft] (W) -- (AW);   % W <- AW
\draw[niceleft] (bW) -- (AW);   % W <- AW
\draw[niceboth]                  (AW) -- (KR);  % AW <-> KR
\draw[niceboth]                  (KR) -- (d1);  % KR <-> d_1
\draw[niceboth]                  (W) -- (U);  % KR <-> d_1

% Optional vertical relation between W and bold W

\end{tikzpicture}
\caption{Topologies on $\Pi(\mathcal{U}, \mathcal{U})$.}
\label{fig:uu}
\end{figure}

\subsection{Comparisons on {$\Pi^{\uparrow}(\mathcal{U}, \mathcal{U})$}}

The situation changes when restricting the setting to $\Pi^{\uparrow}(\mathcal{U}, \mathcal{U}).$ Indeed, the Knothe-Rosenblatt coupling is the optimal coupling of any \(\pi, \tilde \pi \in \Pi^\uparrow(\cU,\cU)\) according to Lemma \ref{lem_rep_IW_AW}. This leads to the following lemma:

\begin{lemma} \label{lem:si}
For $\pi, \tilde \pi \in \Pi(\mathcal{U}, \mathcal{U})$ we have
\begin{align*}
    \mathcal{CW}(\pi, \tilde \pi)\le \mathcal{AW}(\pi, \tilde \pi) =\mathcal{KR}(\pi, \tilde \pi)= d_1(C_\pi, C_{\tilde\pi}),
\end{align*}
if the Knothe-Rosenblatt coupling is optimal for $\mathcal{AW}$. If $\pi$ and $\tilde \pi$ are SI, then 
\begin{align*}
    \mathcal{CW}(\pi, \tilde \pi)= \mathcal{AW}(\pi, \tilde \pi) =\mathcal{KR}(\pi, \tilde \pi)=  d_1(C_\pi, C_{\tilde\pi}).
\end{align*}
\end{lemma}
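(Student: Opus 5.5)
We prove the first chain of relations from left to right, using only earlier results, and then sharpen the first inequality to an equality in the SI case. The inequality $\mathcal{CW}\le\mathcal{AW}$ holds in general by \eqref{IW_leq_AW}: for every $\gamma$ the $\mathcal{CW}$-integrand is bounded by the $\mathcal{AW}$-integrand.

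Next, suppose the Knothe--Rosenblatt coupling is optimal for $\mathcal{AW}$. Since $\pi_1=\tilde\pi_1=\mathcal{U}$, the quantile coupling $(F_X^{-1},F_{\widetilde X}^{-1})_\#\mathcal{U}$ is the identity coupling $(u,u)_\#\mathcal{U}$. Evaluating the $\mathcal{AW}$-objective \eqref{eq:AW_simple} at this optimal coupling therefore gives exactly the expression \eqref{eq:KR}. Hence $\mathcal{AW}(\pi,\tilde\pi)=\mathcal{KR}(\pi,\tilde\pi)$; in fact $\mathcal{AW}\le\mathcal{KR}$ always holds, and optimality supplies the reverse inequality. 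The identity $\mathcal{KR}(\pi,\tilde\pi)=d_1(C_\pi,C_{\tilde\pi})$ is Lemma \ref{lem:KR}, which holds for all $\pi,\tilde\pi\in\Pi(\mathcal{U},\mathcal{U})$. This proves the first display.

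For the SI case, Lemma \ref{lem_rep_IW_AW} applies to $\pi,\tilde\pi$. By \eqref{eq:aw_si}, the Knothe--Rosenblatt coupling is optimal for $\mathcal{AW}$, so the first part yields $\mathcal{AW}=\mathcal{KR}=d_1$. It remains to show $\mathcal{CW}=\mathcal{AW}$. By \eqref{eq:bw_si},
\[
\mathcal{CW}(\pi,\tilde\pi)=\int_{[0,1]^2}\big|F^{-1}_{Y|X=F_X^{-1}(u)}(v)-F^{-1}_{\widetilde Y|\widetilde X=F_{\widetilde X}^{-1}(u)}(v)\big|\,\de v\,\de u .
\]
Comparing with \eqref{eq:aw_si}, the difference $\mathcal{AW}-\mathcal{CW}$ is the term
\[
\int_0^1|F_X^{-1}(u)-F_{\widetilde X}^{-1}(u)|\,\de u ,
\]
which vanishes because both first marginals are $\mathcal{U}$, so $F_X^{-1}=F_{\widetilde X}^{-1}$. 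Hence $\mathcal{CW}=\mathcal{AW}$, and the full chain of equalities follows.

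The main point requiring care is that Lemma \ref{lem_rep_IW_AW} gives optimality of one and the same coupling, the Knothe--Rosenblatt coupling, for both $\mathcal{CW}$ and $\mathcal{AW}$, and that this coupling reduces to the identity coupling under uniform first marginals. A smaller detail is that the conditional kernels are only defined $\mathcal{U}$-almost everywhere. This is harmless, because every quantity involved is an integral in $u$ against Lebesgue measure.
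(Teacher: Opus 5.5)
Your proposal is correct and follows essentially the same route as the paper. You evaluate the $\mathcal{AW}$ objective at the Knothe--Rosenblatt (here identity) coupling to get $\mathcal{AW}=\mathcal{KR}$, cite Lemma~\ref{lem:KR} for $\mathcal{KR}=d_1$, and in the SI case combine Lemma~\ref{lem_rep_IW_AW} with \eqref{eq:bw_si} and \eqref{eq:aw_si}, noting that the first-marginal term vanishes under uniform marginals; you merely spell out details the paper leaves implicit, such as the general bound $\mathcal{CW}\le\mathcal{AW}$.
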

\begin{proof}
    If the Knothe-Rosenblatt coupling is optimal for $\mathcal{AW}$, then we have 
    \begin{align}\label{eq:kr_proof}
        \mathcal{AW}(\pi, \tilde\pi)=\mathcal{KR}(\pi, \tilde \pi) = \int_{[0,1]^2} |F^{-1}_{Y|X=u}(v)-F^{-1}_{\widetilde Y|\widetilde X=u}(v)|  \, \de v\, \de u =d_1(C_\pi, C_{\tilde \pi}),
    \end{align}
    recalling the proof of Lemma \ref{lem:KR}. If $\pi$ and $\tilde \pi$ are SI, then the Knothe-Rosenblatt coupling is optimal for $\mathcal{CW}$ and $\mathcal{AW}$ by Lemma \ref{lem_rep_IW_AW}, so the claim follows from \eqref{eq:kr_proof}, \eqref{eq:bw_si} and \eqref{eq:aw_si}.
\end{proof}

In particular all topologies are equal on $\Pi^{\uparrow}(\mathcal{U}, \mathcal{U})$.

\begin{corollary}\label{cor:si}
If $\pi^n, \pi\in \Pi^{\uparrow}(\mathcal{U}, \mathcal{U})$, then the following are equivalent:
\begin{enumerate}[label = (\roman*)]
\item $\mathcal{W}(\pi^n,\pi)\to 0$,
\item $\mathcal{CW}(\pi^n,\pi)\to 0$,
\item $\mathcal{AW}(\pi^n,\pi)\to 0$,
\item $\mathcal{KR}(\pi^n,\pi)\to 0$,
\item $d_1(C_{\pi^n},C_\pi)\to 0$,
\item $\|C_{\pi^n} - C_\pi\|_\infty\to 0$.
\end{enumerate}
\end{corollary}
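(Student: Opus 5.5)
The plan is to chain together the equalities and equivalences already established, so that all six modes of convergence collapse onto each other on $\Pi^{\uparrow}(\mathcal{U},\mathcal{U})$.

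\emph{Middle block.} Since $\pi^n,\pi$ are SI, Lemma \ref{lem:si} gives for every $n$
\begin{align*}
\mathcal{CW}(\pi^n,\pi)=\mathcal{AW}(\pi^n,\pi)=\mathcal{KR}(\pi^n,\pi)=d_1(C_{\pi^n},C_\pi).
\end{align*}
These are identities between numbers, so (ii), (iii), (iv) and (v) are equivalent at once. Corollary \ref{cor:d1} is not even needed here, since its equivalences are contained in these equalities.

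\emph{Outer block.} Lemma \ref{lem:w1} already gives (i)$\Leftrightarrow$(vi) on all of $\Pi(\mathcal{U},\mathcal{U})$, hence on the SI subclass. It remains to connect (v) and (vi). The direction (v)$\Rightarrow$(vi) holds in general by \eqref{eq:l1_d1}: $d_1$ dominates $\|\cdot\|_1$, and on copulas $L^1$-convergence is equivalent to uniform convergence by their uniform Lipschitz property. For (vi)$\Rightarrow$(v) I invoke Lemma \ref{lem:delta_1}, which applies because elements of $\Pi(\mathcal{U},\mathcal{U})$ lie in $\mathcal{P}_c(\R^2)$ and are SI by assumption. Also $C_{\pi}$ is just the cdf of $\pi$ here, since the marginals are uniform.

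\emph{Main step.} The only nontrivial ingredient is (vi)$\Rightarrow$(v), and I would rely entirely on Lemma \ref{lem:delta_1}. If one wanted to argue directly instead, the route would be concavity: SI means $u\mapsto C(u,v)$ is concave. Uniform convergence of concave functions then implies convergence of their derivatives at every point of differentiability of the limit, that is, Lebesgue-a.e. in $u$ for each fixed $v$. Since $0\le\partial_1 C\le 1$, dominated convergence then yields $d_1(C_{\pi^n},C_\pi)\to0$.

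Combining the two blocks, all six conditions are equivalent.
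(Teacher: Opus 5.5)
Your proof is correct and is essentially the paper's argument: (ii)--(v) follow from the equalities in Lemma \ref{lem:si}, (v)$\Leftrightarrow$(vi) from Lemma \ref{lem:delta_1}, and (i) is attached via Lemma \ref{lem:w1}. The only difference is in how (i) is linked. The paper uses (iii)$\Rightarrow$(i) via $\mathcal{W}\le\mathcal{AW}$ from \eqref{eq: general_inequalities} together with (i)$\Rightarrow$(vi), whereas you use both directions of Lemma \ref{lem:w1}; your separate argument for (v)$\Rightarrow$(vi) via \eqref{eq:l1_d1} is also already covered by Lemma \ref{lem:delta_1}.
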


\begin{proof}
Recall that $(v)\Leftrightarrow (vi)$ holds by Lemma \ref{lem:delta_1}. The equivalences $(v) \Leftrightarrow (iv) \Leftrightarrow (iii) \Leftrightarrow (ii)$ follow from Lemma \ref{lem:si}, $(iii) \Rightarrow (i)$ by \eqref{eq: general_inequalities}, and $(i) \Rightarrow (vi)$ by Lemma \ref{lem:w1}.
\end{proof}

Figure \ref{fig:uparrow} summarizes these equivalences.

\begin{figure}[h!] 
\begin{tikzpicture}[
    font=\large,
    line cap=round,
    line join=round,
    >=Latex,
    node distance=1.5cm,
    dbl/.style={
        draw=black,
        double=black,
        double distance=0.9pt,
        line width=0.45pt
    },
    niceleft/.style={
        double,
        line width=0.4pt,
        line cap=round,
  line join=round,
         {Latex[open,length=1.5mm,width=1.5mm]}-
    },
    niceright/.style={
        double,
        line width=0.4pt,
        line cap=round,
  line join=round,
         -{Latex[open,length=1.5mm,width=1.5mm]}
    },
    niceboth/.style={
        double,
        line width=0.4pt,
  line cap=round,
  line join=round,
  {Latex[open,length=1.5mm,width=1.5mm]}-{Latex[open,length=1.5mm,width=1.5mm]}
    }]

% Main nodes
\node (W)  {$\mathcal{W}(\pi^n, \pi)\to 0$};
\node (AW) [below=0.5cm of W] {$\mathcal{AW}(\pi^n, \pi)\to 0$};
\node (bW) [below=0.5cm of AW] {$\mathcal{CW}(\pi^n, \pi)\to 0$};
\node (KR) [right=0.75cm of AW] {$\mathcal{KR}(\pi^n, \pi)\to 0$};
\node (d1) [right=0.75cm of KR] {$d_1(C_{\pi^n}, C_{\pi})\to 0$};
\node (U) [above=0.5cm of d1] {$\|C_{\pi^n}- C_{\pi}\|_\infty\to 0$};

% W and bold W positioned slightly above/below and to the right of AW

% Relations
\draw[niceboth] (W) -- (AW);   % W <- AW
\draw[niceboth] (AW) -- (bW);   % W <- AW
\draw[niceboth]                  (AW) -- (KR);  % AW <-> KR
\draw[niceboth]                  (KR) -- (d1);  % KR <-> d_1
\draw[niceboth]                  (d1) -- (U);  % KR <-> d_1

% Optional vertical relation between W and bold W

\end{tikzpicture}
\caption{Topologies on $\Pi^{\uparrow}(\mathcal{U}, \mathcal{U})$}
\label{fig:uparrow}
\end{figure}

\subsection{Metric comparisons of couplings and their SI rearrangements}

Now we want to connect couplings in $\cP(\R^2)$ to couplings in $\cP^\uparrow(\R^2)$. We achieve this through the increasing rearrangements from Definition \ref{def:inc_rea}. In fact, it turns out that the adapted distance between increasing rearrangements of $\pi$ and $\tilde \pi$ is always dominated by the adapted distance between $\pi$ and $\tilde\pi$, as shown below.

\begin{proposition}\label{prop:comparison}
For any couplings $\pi, \tilde\pi \in \mathcal{P}(\R^{2})$ we have
\begin{align*}
\mathcal{AW}(\pi^{\uparrow}, \tilde\pi^{\uparrow}) \le \mathcal{AW}(\pi, \tilde \pi) \quad \text{ and }\quad 
\mathcal{CW}(\pi^{\uparrow}, \tilde\pi^{\uparrow}) \le \mathcal{CW}(\pi, \tilde \pi).
\end{align*}
\end{proposition}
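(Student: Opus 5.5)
The idea is to compute the left-hand sides exactly using the explicit optimal coupling available for SI laws. Then I lower-bound the right-hand sides coupling by coupling, using the one-dimensional fact that the quantile (comonotone) coupling minimizes $\E|A-B|$.

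\emph{Step 1 (left-hand side).} Let $(X,Y)\sim\pi$, $(\widetilde X,\widetilde Y)\sim\tilde\pi$, and let $(X^\uparrow,Y^\uparrow)\sim\pi^\uparrow$, $(\widetilde X^\uparrow,\widetilde Y^\uparrow)\sim\tilde\pi^\uparrow$. Since $\pi^\uparrow,\tilde\pi^\uparrow$ are SI, Lemma \ref{lem_rep_IW_AW} gives $\mathcal{AW}(\pi^\uparrow,\tilde\pi^\uparrow)$ via \eqref{eq:aw_si}, and $\mathcal{CW}(\pi^\uparrow,\tilde\pi^\uparrow)$ via \eqref{eq:bw_si}. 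By Lemma \ref{lem:L1_integral}, the inner quantile integral can be rewritten with conditional cdfs. Set
\[
G(u,y):=F_{Y^\uparrow|X^\uparrow=F_X^{-1}(u)}(y),\qquad \widetilde G(u,y):=F_{\widetilde Y^\uparrow|\widetilde X^\uparrow=F_{\widetilde X}^{-1}(u)}(y).
\]
Then
\[
\mathcal{AW}(\pi^\uparrow,\tilde\pi^\uparrow)=\int_0^1|F_X^{-1}(u)-F_{\widetilde X}^{-1}(u)|\,\de u+\int_\R\int_0^1|G(u,y)-\widetilde G(u,y)|\,\de u\,\de y,
\]
and $\mathcal{CW}(\pi^\uparrow,\tilde\pi^\uparrow)$ is the same expression without the first term.

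\emph{Step 2 (identifying $G$ as a quantile function).} Fix $y$. By SI and $X^\uparrow\eqd X$, the map $u\mapsto G(u,y)$ is decreasing. By Definition \ref{def:inc_rea}, under $u\sim\mathcal U$ it has the same law as $F_{Y|X}(y)$. A monotone function of $U$ with prescribed law is a.e.\ the corresponding (decreasing) quantile function. Hence, for each $y$, $G(\cdot,y)$ and $\widetilde G(\cdot,y)$ are the decreasing quantile functions of the laws of $F_{Y|X}(y)$ and $F_{\widetilde Y|\widetilde X}(y)$. Therefore
\[
\int_0^1|G(u,y)-\widetilde G(u,y)|\,\de u=\mathcal W\big(\mathrm{law}(F_{Y|X}(y)),\mathrm{law}(F_{\widetilde Y|\widetilde X}(y))\big).
\]
The reversal of monotonicity is harmless, since both functions are decreasing and thus comonotone.

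\emph{Step 3 (right-hand side).} Fix any $\gamma\in\Pi(\pi_1,\tilde\pi_1)$ and let $(X',\widetilde X')\sim\gamma$. For each $y$, the pair $\big(F_{Y|X=X'}(y),F_{\widetilde Y|\widetilde X=\widetilde X'}(y)\big)$ is a coupling of the two laws from Step 2. Hence
\[
\int|F_{Y|X=x}(y)-F_{\widetilde Y|\widetilde X=\tilde x}(y)|\,\gamma(\de x,\de\tilde x)\ge\int_0^1|G(u,y)-\widetilde G(u,y)|\,\de u.
\]
Likewise, $\int|x-\tilde x|\,\de\gamma\ge\mathcal W(\pi_1,\tilde\pi_1)=\int_0^1|F_X^{-1}-F_{\widetilde X}^{-1}|\,\de u$. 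Next I integrate the first inequality in $y$, apply Fubini (all integrands are nonnegative and jointly measurable), and use the cdf form \eqref{eq:AW_simple} of $\mathcal W(\pi_x,\tilde\pi_{\tilde x})$. This shows that the $\gamma$-cost for $\mathcal{AW}$ (respectively $\mathcal{CW}$) dominates the expression from Step 1. Taking the infimum over $\gamma$ yields both inequalities.

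The main obstacle is Step 2, which needs care in three places. First, Definition \ref{def:inc_rea} must be read as giving a single kernel that works simultaneously for all $y$, so that $G$ is jointly measurable. Second, the identification ``decreasing in $u$ with the right law, hence equal to the quantile function'' must hold for a.e.\ $u$, for every $y$ or at least a.e.\ $y$. Third, if $F_X$ is not continuous, SI of $\pi^\uparrow$ has to be transported through $F_X^{-1}$. I would handle this by working with a.e.\ versions, noting that $u\mapsto F_X^{-1}(u)$ is increasing so monotonicity transfers, and restricting $y$ to a countable dense set before extending by right-continuity.
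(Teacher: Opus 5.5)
Your proposal is correct and follows essentially the same route as the paper. It evaluates the rearranged side with the Knothe--Rosenblatt coupling via Lemma \ref{lem_rep_IW_AW} and passes to conditional cdfs with Lemma \ref{lem:L1_integral}. For each fixed $y$, it then bounds the $\gamma$-cost from below using that $F_{Y^\uparrow|X^\uparrow}(y)$ and $F_{\widetilde Y^\uparrow|\widetilde X^\uparrow}(y)$ are comonotone copies of the laws of $F_{Y|X}(y)$ and $F_{\widetilde Y|\widetilde X}(y)$. The only cosmetic difference is that the paper invokes a convex-order comparison of the differences (citing R\"uschendorf), whereas you use directly the special case that the comonotone coupling minimizes $\E|A-B|$, which is all that is needed.
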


\begin{proof}
Let $(X,Y) \sim \pi$ and $(\widetilde X, \widetilde Y) \sim \tilde \pi$.
For \(y\in \R\), consider the random variables 
\begin{align*}
    F_{Y|X}(y)  \text{ and }  F_{\widetilde{Y}|\widetilde X}(y).
\end{align*}
Then, for the increasing rearrangements $\pi^\uparrow, \tilde\pi^\uparrow$ of $\pi, \tilde \pi,$ let $(X^\uparrow, Y^\uparrow)\sim \pi^\uparrow,(\tilde{X}^\uparrow,\widetilde{Y}^\uparrow)\sim \tilde{\pi}^\uparrow$ and consider the random variables 
\begin{align*}
    F_{Y^\uparrow|X^\uparrow}(y)  \text{ and } F_{\widetilde{Y}^\uparrow|\widetilde{X}^\uparrow}(y). 
\end{align*}
Recalling Definition \ref{def:inc_rea} we have \(F_{Y|X}(y)\eqd F_{Y^\uparrow|X^\uparrow}(y)\), \(F_{\widetilde{Y}|\widetilde X}(y) \eqd F_{\widetilde{Y}^\uparrow|\widetilde{X}^\uparrow}(y)\), and \(F_{Y^\uparrow|X^\uparrow}(y),F_{\widetilde{Y}^\uparrow|\widetilde{X}^\uparrow}(y)\) are comonotone for any $y\in \R$. %As in the proof of \cite[Theorem 3.6\,(i)]{Ansari-2026} we now conclude that
This yields
    \begin{align}\label{ineq_cx}
        F_{Y|X}(y) - F_{\widetilde{Y}|\widetilde X}(y) \geq_{cx} F_{Y^\uparrow|X^\uparrow}(y) - F_{\widetilde{Y}^\uparrow|\widetilde{X}^\uparrow}(y);
    \end{align}
    see e.g.~\cite[Corollary 3.28\,(b)]{Ru-2013}. %\textcolor{red}{Why are there 2 references here, to Ru and Ans?}. 
    Thus we obtain
    \begin{align*}
        \mathcal{AW}(\pi,\tilde{\pi}) &= \inf_{\gamma \in \Pi(\pi_1,\tilde{\pi}_1)} \int |x-\tilde{x}| + \mathcal{W}(\pi_x,\tilde{\pi}_{\tilde{x}}) \,\gamma(\de x,\de \tilde{x}) \\
        &\geq  \inf_{\gamma \in \Pi(\pi_1,\tilde{\pi}_1)} \int |x-\tilde{x}| \,\gamma(\de x, \de \tilde{x}) \\
        &\quad + \inf_{\gamma \in \Pi(\pi_1,\tilde{\pi}_1)} \int \int_0^1 \big|F_{Y|X=x}^{-1}(v) - F_{\tilde{Y}|\tilde{X}=\tilde{x}}^{-1}(v) \big|\, \de v \,\gamma(\de x, \de \tilde{x}) \\
        &= \cW(\pi_1,\tilde{\pi}_1) + \inf_{\gamma \in \Pi(\pi_1,\tilde{\pi}_1)}  \int  \big|F_{Y|X=x}(y) - F_{\tilde{Y}|\tilde{X}=\tilde{x}}(y) \big|\, \de y \,\gamma(\de x, \de \tilde{x}) \\
        & \geq \cW(\pi_1,\tilde{\pi}_1) + \E \Big[\int  \big|F_{Y^\uparrow|X^\uparrow }(y) - F_{\widetilde{Y}^\uparrow|\widetilde{X}^\uparrow}(y) \big| \,\de y\Big] \\
        &= \int_0^1 \big|F_{X}^{-1}(u) - F_{\tilde{X}}^{-1}(u)\big| +\int_0^1 \big|F_{Y^\uparrow|X^\uparrow=F_X^{-1}(u)}^{-1}(v) - F_{\widetilde{Y}^\uparrow|\tilde{X}^\uparrow=F_{\widetilde{X}}^{-1}(u)}^{-1}(v) \big| \,\de v \,\de u  \\
        &= \mathcal{AW}(\pi^\uparrow,\tilde{\pi}^\uparrow),
    \end{align*}
    where the first inequality is trivial, the second equality uses Lemma \ref{lem:L1_integral}, the second inequality is due to \eqref{ineq_cx} and the third equality uses again Lemma \ref{lem:L1_integral} and the fact that \(F_{Y^\uparrow|X^\uparrow}(y),F_{\widetilde{Y}^\uparrow|\widetilde{X}^\uparrow}(y)\) are comonotone for any $y\in \R$. The fourth equality follows from Lemma \ref{lem_rep_IW_AW} noting that \(X\stackrel{d}{=} X^\uparrow \sim \pi_1^\uparrow\) and \(\widetilde{X} \stackrel{d}{=} \widetilde{X}^\uparrow \sim \tilde{\pi}_1^\uparrow\). 
     Lastly, the inequality for $\mathcal{CW}$ follows by omitting the first term in the integrands.
\end{proof}

An immediate corollary of the above is the following result; see also \cite[Theorems 3.4\,(i) and 3.6\,(i)]{Ansari-2026} and \cite[Proof of Theorem 3.2]{strothmann2022}.

\begin{corollary}\label{cor_incrRear}
For any $\pi, \tilde \pi\in \Pi(\mathcal{U}, \mathcal{U})$ we have
\begin{align*}
    \mathcal{KR}(\pi^\uparrow, \tilde {\pi}^\uparrow)=d_1( C_{\pi}^{\uparrow}, C_{\tilde \pi}^{\uparrow}) = \mathcal{AW}(\pi^\uparrow, \tilde \pi^\uparrow) \le \mathcal{AW}(\pi, \tilde \pi)\le \mathcal{KR}(\pi, \tilde \pi)=d_1(C_\pi, C_{\tilde \pi}).
\end{align*}
\end{corollary}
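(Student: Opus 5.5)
The chain of (in)equalities in Corollary \ref{cor_incrRear} will be assembled from results already established; there is essentially nothing new to prove. Fix $\pi,\tilde\pi\in\Pi(\mathcal{U},\mathcal{U})$. First I note that the increasing rearrangements $\pi^\uparrow,\tilde\pi^\uparrow$ again lie in $\Pi(\mathcal{U},\mathcal{U})$. By Definition \ref{def:inc_rea}, $X^\uparrow\eqd X\sim\mathcal{U}$. Moreover, $F_{Y|X}(y)\eqd F_{Y^\uparrow|X^\uparrow}(y)$ for every $y$, and taking expectations gives $F_{Y^\uparrow}(y)=F_Y(y)$, so the second marginal is also $\mathcal{U}$. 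Thus $\pi^\uparrow,\tilde\pi^\uparrow\in\Pi^\uparrow(\mathcal{U},\mathcal{U})$.

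For the leftmost two equalities I apply Lemma \ref{lem:si} to the SI pair $\pi^\uparrow,\tilde\pi^\uparrow$. This yields $\mathcal{AW}(\pi^\uparrow,\tilde\pi^\uparrow)=\mathcal{KR}(\pi^\uparrow,\tilde\pi^\uparrow)=d_1(C_{\pi^\uparrow},C_{\tilde\pi^\uparrow})$. Lemma \ref{lem:commute} then identifies $C_{\pi^\uparrow}=C_\pi^\uparrow$, and likewise for $\tilde\pi$. This is applicable since $\Pi(\mathcal{U},\mathcal{U})\subset\mathcal{P}_c(\R^2)$; alternatively it follows directly from Definition \ref{def:inc_rea}(ii), because $\pi$ is already the coupling induced by $C_\pi$.

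The middle inequality $\mathcal{AW}(\pi^\uparrow,\tilde\pi^\uparrow)\le\mathcal{AW}(\pi,\tilde\pi)$ is exactly Proposition \ref{prop:comparison}. The next inequality $\mathcal{AW}\le\mathcal{KR}$ is part of \eqref{eq: general_inequalities}, since $\mathcal{KR}$ evaluates the $\mathcal{AW}$ objective at one particular coupling. The final equality $\mathcal{KR}(\pi,\tilde\pi)=d_1(C_\pi,C_{\tilde\pi})$ is Lemma \ref{lem:KR}.

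The only step requiring any care is the first: confirming that the rearrangement preserves uniform marginals, so that Lemma \ref{lem:si} applies. This is a one-line computation. The rest is citation.
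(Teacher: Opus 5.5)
Your proposal is correct and follows essentially the same route as the paper. The paper likewise combines Lemmas \ref{lem:commute} and \ref{lem:si} for the SI pair, then Proposition \ref{prop:comparison}, \eqref{eq: general_inequalities} and Lemma \ref{lem:KR}; your explicit check that $\pi^\uparrow,\tilde\pi^\uparrow$ keep uniform marginals (needed to invoke Lemma \ref{lem:si}) is a correct detail the paper leaves implicit.
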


\begin{proof}
From Lemmas \ref{lem:commute} and \ref{lem:si} we have $ d_1( C_{\pi}^{\uparrow}, C_{\tilde \pi}^{\uparrow})=\mathcal{KR}(\pi^\uparrow, \tilde \pi^\uparrow)=\mathcal{AW}(\pi^\uparrow, \tilde \pi^\uparrow)$ as $\pi^\uparrow, \tilde \pi^\uparrow$ are SI. From Proposition \ref{prop:comparison} it follows that $\mathcal{AW}(\pi^\uparrow, \tilde \pi^\uparrow) \le \mathcal{AW}(\pi, \tilde \pi)$. Next, by \eqref{eq: general_inequalities} $\mathcal{AW}(\pi, \tilde \pi) \le \mathcal{KR}(\pi, \tilde \pi)$. Finally $\mathcal{KR}(\pi, \tilde \pi)= d_1(C_\pi, C_{\tilde \pi})$ by Lemma \ref{lem:KR}. This concludes the proof.
\end{proof}

\begin{example}
    Proposition \ref{prop:comparison} fails to hold for the classical Wasserstein distance. To see this, recall from Example \ref{ex:shuffle-of-min} that there exists a sequence \(\pi^n = (x,f_n(x))_\# \pi^n_1\) in $\Pi(\mathcal{U}, \mathcal{U})$  such that  \(\cW(\pi^{n},\pi) \to 0\), where $\pi = \mathcal{U}\otimes \mathcal{U}$. However, by construction, for \((X_n,Y_n)\sim \pi_n\), we see that \(Y_n\) perfectly depends on \(X_n\). By Lemma \ref{lem_char_incRearr}, this yields \(C_{\pi_n}^\uparrow = C^+\) for all \(n\) and \(C_\pi^\uparrow = C^{\perp}\). It follows that \(\cW(\pi_{n}^\uparrow,\pi^\uparrow) \equiv \cW(\pi_{C^+},\pi_{C^\perp}) > 0\) for all \(n\).
\end{example}

\subsection{Continuity in bivariate models}

We provide some general conditions on bivariate, absolutely continuous distributions to ensure continuity under the adapted Wasserstein distance and, in particular, under the conditional Wasserstein distance.
First, we recall the following lemmas:
\begin{lemma}[{\cite[Lemma 37]{blanchet2024bounding}}]
    For all $R>0$ define the map $\Phi^R:\R^2 \to \R^2$ via $\Phi^R(x,y):=(\varphi^R(x), \varphi^R(y))$, where 
    \begin{align*}
        \varphi^R(x):=\begin{cases}
            x &\text{if }|x|< R,\\
            \frac{Rx}{|x|} &\text{ otherwise.}
        \end{cases}
    \end{align*}
    Then 
    \begin{align*}
        \mathcal{AW}(\pi,(\Phi^R)_\#\pi) \le 8 \int_{\{|x|+|y|\ge R\}} (|x|+|y|)\,\pi(\de x, \de y)
    \end{align*}
\end{lemma}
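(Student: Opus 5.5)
The plan is to bound $\mathcal{AW}(\pi,(\Phi^R)_\#\pi)$ by evaluating the objective in \eqref{IW_leq_AW} at one explicit coupling of the first marginals, namely $\gamma:=(x,\varphi^R(x))_\#\pi_1\in\Pi(\pi_1,\varphi^R_\#\pi_1)$. Write $\tilde\pi:=(\Phi^R)_\#\pi$. Its first marginal is $\varphi^R_\#\pi_1$, and we need its disintegration $\tilde\pi_{\tilde x}$ along this coupling. Since $\varphi^R$ is injective on $(-R,R)$, we get $\tilde\pi_{\varphi^R(x)}=\varphi^R_\#\pi_x$ for $\pi_1$-a.e.\ $|x|<R$. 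The points $\pm R$, however, collect all the mass from $\{x\ge R\}$ and $\{x\le -R\}$ respectively. There the kernel is the mixture
\begin{align*}
\tilde\pi_{R}=\frac{1}{\pi_1([R,\infty))}\int_{[R,\infty)}\varphi^R_\#\pi_{x'}\,\pi_1(\de x'),
\end{align*}
provided $\pi_1([R,\infty))>0$, and analogously at $-R$. The bound then reads
\begin{align*}
\mathcal{AW}(\pi,\tilde\pi)\le \int |x-\varphi^R(x)|\,\pi_1(\de x)+\int \mathcal{W}(\pi_x,\tilde\pi_{\varphi^R(x)})\,\pi_1(\de x),
\end{align*}
and I will split the second integral over the regions $\{|x|<R\}$ and $\{|x|\ge R\}$.

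\emph{First term.} We have $|x-\varphi^R(x)|\le |x|\1_{\{|x|\ge R\}}$. On $\{|x|\ge R\}$ we also have $|x|+|y|\ge R$, so this term is at most $\int_{\{|x|+|y|\ge R\}}(|x|+|y|)\,\de\pi$.

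\emph{Region $\{|x|<R\}$.} Couple $\pi_x$ and $\varphi^R_\#\pi_x$ through the map $y\mapsto\varphi^R(y)$. This gives
\begin{align*}
\mathcal{W}(\pi_x,\varphi^R_\#\pi_x)\le\int|y-\varphi^R(y)|\,\pi_x(\de y)\le \int_{\{|y|\ge R\}}|y|\,\pi_x(\de y).
\end{align*}
Integrating against $\pi_1$ over $\{|x|<R\}$ is again bounded by the tail integral, since $\{|y|\ge R\}\subseteq\{|x|+|y|\ge R\}$.

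\emph{Region $\{|x|\ge R\}$.} This is the step I expect to be the main obstacle, because the kernel $\tilde\pi_{\pm R}$ is an averaged mixture rather than $\varphi^R_\#\pi_x$ itself, so the map coupling from the previous case is not available. I would not try to compare $\pi_x$ and the mixture directly. Instead I would pass through $\delta_0$ using the triangle inequality:
\begin{align*}
\mathcal{W}(\pi_x,\tilde\pi_R)\le\int|y|\,\pi_x(\de y)+\frac{1}{\pi_1([R,\infty))}\int_{[R,\infty)}\int|y|\,\pi_{x'}(\de y)\,\pi_1(\de x'),
\end{align*}
where the second term uses $|\varphi^R(y)|\le|y|$. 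Integrating over $x\in[R,\infty)$ against $\pi_1$, the normalisation $\pi_1([R,\infty))$ cancels, and the contribution is at most $2\int_{\{x\ge R\}}|y|\,\de\pi$. The same holds for $x\le -R$. Both sets lie inside $\{|x|+|y|\ge R\}$.

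Summing the three pieces gives a bound of the form $c\int_{\{|x|+|y|\ge R\}}(|x|+|y|)\,\de\pi$ with $c\le 4$, which is within the claimed constant $8$. Two points need some care: the a.e.\ identification of the disintegration of $\tilde\pi$ (including the degenerate case $\pi_1([R,\infty))=0$, where that piece is simply absent), and the measurability of $x\mapsto\tilde\pi_{\varphi^R(x)}$. Both are routine.
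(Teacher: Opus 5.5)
Your proof is correct. The paper does not prove this statement; it imports it from \cite[Lemma 37]{blanchet2024bounding}. So your argument is a genuinely different, self-contained route rather than a reconstruction of the paper's.

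You evaluate the one-step formula \eqref{IW_leq_AW} at the explicit coupling $(x,\varphi^R(x))_\#\pi_1$. You identify the kernel of $(\Phi^R)_\#\pi$ correctly: it is $\varphi^R_\#\pi_{\tilde x}$ for $\tilde x\in(-R,R)$, and at the atoms $\pm R$ it is the normalised $\pi_1$-average of $\varphi^R_\#\pi_{x'}$ over $\{x'\ge R\}$, resp.\ $\{x'\le -R\}$. You also isolate the one real difficulty. Because $\varphi^R$ collapses $[R,\infty)$ to a point, the map-induced coupling of kernels is unavailable there. Routing through $\delta_0$ handles this, since the factor $\pi_1([R,\infty))$ cancels after integration.

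The citation buys the paper brevity, together with whatever generality the source lemma is formulated in. Your argument buys a transparent proof in exactly the one-period, real-valued setting used here, and a better constant.

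You can in fact sharpen the constant further. On the merged region, keep $|x-\varphi^R(x)|=|x|-R$ and use $|\varphi^R(y)|=\min\{|y|,R\}\le R$ instead of $|\varphi^R(y)|\le |y|$. Then the contribution of $\{|x|\ge R\}$ is at most $\int_{\{|x|\ge R\}}\big((|x|-R)+|y|+R\big)\,\de\pi=\int_{\{|x|\ge R\}}(|x|+|y|)\,\de\pi$. The region $\{|x|<R\}$ contributes at most $\int_{\{|x|<R,\,|y|\ge R\}}|y|\,\de\pi$. These two sets are disjoint subsets of $\{|x|+|y|\ge R\}$, so the inequality holds with constant $1$ in place of $8$.
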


For the next lemma, recall that the total variation distance between two measures $\pi, \tilde\pi \in \mathcal{P}(\R^2)$ is given by
\begin{align*}
    \mathrm{TV}(\pi, \tilde\pi) =\sup_{A\subseteq \R^2} |\pi(A)-\tilde\pi(A)|.
\end{align*}

\begin{lemma}[{\cite[Cor. 2.7]{acciaio2025estimating}}]
    For any $\pi, \tilde \pi \in \mathcal{P}(\R^2)$ we have
    \begin{align*}
        \mathcal{AW}((\Phi^R)_\#\pi, (\Phi^R)_\#\tilde \pi)\le 3R \cdot \operatorname{TV}((\Phi^R)_\#\pi, (\Phi^R)_\#\tilde \pi).
    \end{align*}
\end{lemma}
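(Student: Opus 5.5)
The plan is to build an explicit coupling of the first marginals and bound the adapted cost on each part separately. Write $\mu:=(\Phi^R)_\#\pi$ and $\nu:=(\Phi^R)_\#\tilde\pi$. Both are supported in $\{|x|\le R,\ |y|\le R\}$, so any two first coordinates satisfy $|x-\tilde x|\le 2R$. Likewise any two conditional laws $\mu_x,\nu_{\tilde x}$ live on $[-R,R]$, so $\mathcal{W}(\mu_x,\nu_{\tilde x})\le 2R\,\mathrm{TV}(\mu_x,\nu_{\tilde x})\le 2R$. Set $t:=\mathrm{TV}(\mu,\nu)$ and $s:=\mathrm{TV}(\mu_1,\nu_1)\le t$.

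Fix a dominating measure $\lambda$ for $\mu_1,\nu_1$ and let $f=d\mu_1/d\lambda$, $g=d\nu_1/d\lambda$, $h=\min(f,g)$, with $\rho:=h\lambda$ of mass $1-s$. As the coupling in $\Pi(\mu_1,\nu_1)$ we take
\begin{align*}
\gamma:=(\mathrm{id},\mathrm{id})_\#\rho+\frac{1}{s}(\mu_1-\rho)\otimes(\nu_1-\rho)
\end{align*}
(with the second term omitted if $s=0$). The off-diagonal part has mass $s$, and its integrand $|x-\tilde x|+\mathcal{W}(\mu_x,\nu_{\tilde x})$ is at most the sum of the two diameters. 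On the diagonal part the first term vanishes, so the cost is at most $2R\int h(x)\,\mathrm{TV}(\mu_x,\nu_x)\,\lambda(\de x)$.

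The key step is to compare this diagonal quantity with $t$. Since $\mu=f\lambda\otimes\mu_x$ and $\nu=g\lambda\otimes\nu_x$, we have $2t=\int\|f\mu_x-g\nu_x\|\,\de\lambda$. Splitting
\begin{align*}
f\mu_x-g\nu_x=h(\mu_x-\nu_x)+(f-h)\mu_x-(g-h)\nu_x
\end{align*}
and applying the triangle inequality gives $\int h\,\|\mu_x-\nu_x\|\,\de\lambda\le 2t+2s$. This bounds the total cost by a constant times $R\,t$.

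I expect the main obstacle to be the sharp constant $3R$. The crude accounting above yields something like $2R(t+s)+4Rs$, which is too large, so the loss must be reduced in two places. First, I would try to use that $\mu_x$ and $\nu_x$ are both supported in an interval of length $2R$, so that $\mathcal{W}\le R\,\|\cdot\|$ in total-variation norm, with no wasted factor of two. Second, I would try to bound the off-diagonal mass more economically by keeping part of the residuals inside the common part of the joint laws, that is, by working with $\mu\wedge\nu$ on $\R^2$ rather than only $\mu_1\wedge\nu_1$, so that the diagonal and off-diagonal contributions add up to at most $3Rt$. If this refined bookkeeping fails, the argument still proves the bound with a slightly worse absolute constant, which is all that is needed downstream.
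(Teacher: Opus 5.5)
The paper gives no argument of its own here, only the citation, so the comparison is with the statement itself.

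\textbf{What your argument does prove.} Your construction is sound as far as it goes. The coupling $\gamma$ has the right marginals, and the off-diagonal part costs at most $s(2R+2R)$. Take $\lambda$ to be the first marginal of $\tfrac12(\mu+\nu)$ and disintegrate the densities of $\mu,\nu$ with respect to $\tfrac12(\mu+\nu)$; this justifies $2t=\int\|f\mu_x-g\nu_x\|\,\de\lambda$. Your triangle inequality then bounds the diagonal cost by $2R(t+s)$. Since $s\le t$, you have established $\mathcal{AW}(\mu,\nu)\le 2R(t+s)+4Rs\le 8R\,t$.

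\textbf{The gap is the constant $3R$, and neither proposed refinement can close it.} The first refinement is vacuous. On an interval of length $2R$, the bound $\mathcal{W}\le R\|\cdot\|$ in total variation norm is exactly the bound $\mathcal{W}\le 2R\,\mathrm{TV}$ you already used, because $\|\alpha-\beta\|=2\,\mathrm{TV}(\alpha,\beta)$. More fundamentally, the inequality with $3R$ is false under this paper's conventions: TV as a supremum over sets, $\Phi^R$ clipping each coordinate to $[-R,R]$, and cost $|x-\tilde x|+\mathcal{W}(\pi_x,\tilde\pi_{\tilde x})$. Indeed, $\pi=\delta_{(-R,-R)}$ and $\tilde\pi=\delta_{(R,R)}$ are fixed by $\Phi^R$ and have $\mathrm{TV}=1$. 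Yet $\mathcal{AW}(\pi,\tilde\pi)=2R+2R=4R>3R$.

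\textbf{Your constant $8R$ is optimal.} So no bookkeeping with $\mu\wedge\nu$ on $\R^2$ can improve it. Let $\mu=\tfrac12\delta_{(R,-R)}+(\tfrac12-\epsilon)\delta_{(-R,R)}+\epsilon\delta_{(-R,-R)}$. Let $\nu$ be the same measure with $\epsilon\delta_{(-R,-R)}$ replaced by $\epsilon\delta_{(R,R)}$, so that $\mathrm{TV}(\mu,\nu)=\epsilon$.
\begin{itemize}
\item The first marginals are $\tfrac12\delta_R+\tfrac12\delta_{-R}$ and $(\tfrac12+\epsilon)\delta_R+(\tfrac12-\epsilon)\delta_{-R}$. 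Any coupling of them must move mass at least $\epsilon$ from $-R$ to $R$. The cost is affine in the one free parameter and is minimized when exactly $\epsilon$ is moved.
\item A direct computation then gives $\mathcal{AW}(\mu,\nu)\ge 8R\epsilon(1-2\epsilon)$.
\item The forced transfer contributes about $4R\epsilon$.
\item Each of the two diagonal blocks contributes about $2R\epsilon$, since their kernels differ by roughly $2\epsilon$ of mass transported a distance $2R$.
\end{itemize}
This lower bound exceeds even $3R\|\mu-\nu\|=6R\epsilon$ once $\epsilon<1/8$, so the problem is not just a normalisation of TV.

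\textbf{The right conclusion is your fallback.} State the bound with $8R$ in place of $3R$; presumably the cited source uses different conventions. This change is harmless downstream, since the subsequent lemma and corollary only use an estimate of the form $\mathcal{AW}\le cR\,\mathrm{TV}$.
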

As TV is contractive under pushforward maps we conclude $$\operatorname{TV}((\Phi^R)_\#\pi, (\Phi^R)_\#\tilde \pi) \le \operatorname{TV}(\pi, \tilde\pi).$$
Combining these facts leads to the following lemma:
\begin{lemma}
    For any $\pi, \tilde \pi \in \mathcal{P}(\R^2)$ and any $R>0$ we have
    \begin{align*}
        \mathcal{AW}(\pi, \tilde \pi) \le 8 \int_{\{|x|+|y|\ge R\}} (|x|+|y|)\,(\pi+\tilde \pi)(\de x, \de y) + 3R \cdot\operatorname{TV}(\pi, \tilde\pi).
    \end{align*}
\end{lemma}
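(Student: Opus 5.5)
The plan is to apply the triangle inequality for $\mathcal{AW}$, inserting the truncated measures $(\Phi^R)_\#\pi$ and $(\Phi^R)_\#\tilde\pi$ as intermediate points:
\begin{align*}
\mathcal{AW}(\pi,\tilde\pi) \le \mathcal{AW}(\pi,(\Phi^R)_\#\pi) + \mathcal{AW}\big((\Phi^R)_\#\pi,(\Phi^R)_\#\tilde\pi\big) + \mathcal{AW}\big((\Phi^R)_\#\tilde\pi,\tilde\pi\big).
\end{align*}
This uses that $\mathcal{AW}$ is a metric on $\mathcal{P}(\R^2)$ (at least on measures with finite first moment). If the right-hand side of the claim is infinite there is nothing to prove. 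Otherwise both $\pi$ and $\tilde\pi$ have finite first moments, because $(|x|+|y|)\le R$ on the complement of $\{|x|+|y|\ge R\}$. Hence all three terms are finite.

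The three terms are then bounded separately.
\begin{itemize}
\item The first term is at most $8\int_{\{|x|+|y|\ge R\}}(|x|+|y|)\,\pi(\de x,\de y)$ by the lemma of \cite[Lemma 37]{blanchet2024bounding}.
\item The third term is at most the same expression with $\tilde\pi$ in place of $\pi$, by the same lemma together with the symmetry of $\mathcal{AW}$.
\item The middle term is at most $3R\cdot\operatorname{TV}((\Phi^R)_\#\pi,(\Phi^R)_\#\tilde\pi)$ by \cite[Cor. 2.7]{acciaio2025estimating}. By the contraction of TV under pushforwards recorded above, this is further at most $3R\cdot \operatorname{TV}(\pi,\tilde\pi)$.
\end{itemize}
Adding the two tail integrals gives the integral against $\pi+\tilde\pi$, which is exactly the stated bound.

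The only point needing care is the triangle inequality for $\mathcal{AW}$. It does not follow trivially from the one-step formula \eqref{IW_leq_AW}, since composing couplings of the first marginals requires gluing. I would cite it from the adapted transport literature, e.g.\ \cite{backhoff2020all} or \cite{Beiglboeck-Pammer-Posch-2023}, where $\mathcal{AW}$ is shown to be a metric. Alternatively, it can be checked directly. Glue near-optimal couplings $\gamma_1\in\Pi(\pi_1,\rho_1)$ and $\gamma_2\in\Pi(\rho_1,\tilde\pi_1)$ along the common middle marginal $\rho_1$. Then apply the triangle inequality for $|x-\tilde x|$ and for $\mathcal{W}(\pi_x,\tilde\pi_{\tilde x})\le \mathcal{W}(\pi_x,\rho_z)+\mathcal{W}(\rho_z,\tilde\pi_{\tilde x})$ pointwise, and integrate against the glued measure. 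Everything else is a direct combination of the preceding lemmas.
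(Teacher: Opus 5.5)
Your proof is correct and is essentially the argument the paper means by ``combining these facts'': you apply the triangle inequality for $\mathcal{AW}$ through $(\Phi^R)_\#\pi$ and $(\Phi^R)_\#\tilde\pi$, use the two cited lemmas for the outer and middle terms, and then use contraction of $\operatorname{TV}$ under pushforward. Your gluing argument for the triangle inequality of $\mathcal{AW}$ supplies a step the paper leaves implicit.
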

This gives the following corollary:
\begin{corollary}
    Let $\pi \in \mathcal{P}(\R^2)$ and let $(\pi^n)$ be a sequence in $\mathcal{P}(\R^2)$ such that $\operatorname{TV}(\pi^n, \pi)\to 0$ and $(\pi^n)$ is $\mathcal{W}$-precompact, i.e.
    \begin{align}\label{eq:precompact}
        \lim_{R\to \infty} \sup_{n\in \N} \int_{\{|x|+|y|\ge R\}} (|x|+|y|)\,\pi^n (\de x, \de y)=0.
    \end{align}
    Then $\mathcal{AW}(\pi^n, \pi)\to 0.$
\end{corollary}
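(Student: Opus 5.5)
The plan is to apply the preceding lemma to the pair $(\pi^n,\pi)$ for a fixed truncation level $R>0$, and then send first $n\to\infty$ and afterwards $R\to\infty$. For every $n\in\N$ and every $R>0$ the lemma gives
\begin{align*}
    \mathcal{AW}(\pi^n,\pi) \le 8\int_{\{|x|+|y|\ge R\}}(|x|+|y|)\,\pi^n(\de x,\de y) + 8\int_{\{|x|+|y|\ge R\}}(|x|+|y|)\,\pi(\de x,\de y) + 3R\cdot \operatorname{TV}(\pi^n,\pi).
\end{align*}
Taking $\limsup_{n\to\infty}$ with $R$ fixed, the last term vanishes because $\operatorname{TV}(\pi^n,\pi)\to 0$. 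The first term is bounded by $8\sup_{m}\int_{\{|x|+|y|\ge R\}}(|x|+|y|)\,\de\pi^m$. Hence $\limsup_n \mathcal{AW}(\pi^n,\pi)$ is at most $8\sup_m(\dots) + 8\int_{\{|x|+|y|\ge R\}}(|x|+|y|)\,\de\pi$. Letting $R\to\infty$, the first quantity tends to zero by the precompactness assumption \eqref{eq:precompact}.

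The main point to verify is that the tail term of the limit $\pi$ also vanishes as $R\to\infty$. This requires $\pi$ to have a finite first moment. I would derive this from the hypotheses rather than assume it. By \eqref{eq:precompact}, choose $R_0$ such that $\sup_n\int_{\{|x|+|y|\ge R_0\}}(|x|+|y|)\,\de\pi^n\le 1$. Then $\sup_n\int(|x|+|y|)\,\de\pi^n\le R_0+1$.

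For each $M>0$ the function $\min\{|x|+|y|,M\}$ is bounded by $M$. Total variation convergence therefore yields
\begin{align*}
    \int \min\{|x|+|y|,M\}\,\de\pi = \lim_n \int \min\{|x|+|y|,M\}\,\de\pi^n \le R_0+1 .
\end{align*}
The difference of the two integrals is at most $2M\,\operatorname{TV}(\pi^n,\pi)$. Monotone convergence in $M$ then gives $\int(|x|+|y|)\,\de\pi<\infty$. Dominated convergence now shows that $\int_{\{|x|+|y|\ge R\}}(|x|+|y|)\,\de\pi\to 0$ as $R\to\infty$.

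Combining these steps yields $\limsup_n\mathcal{AW}(\pi^n,\pi)=0$, which is the claim. The only delicate step is the moment bound for $\pi$; the rest is a direct $\varepsilon/R$ argument.
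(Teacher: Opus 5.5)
Your proof is correct and follows the paper's argument: the paper derives this corollary directly from the preceding lemma by fixing $R$, letting $n\to\infty$ so that the $3R\cdot\operatorname{TV}(\pi^n,\pi)$ term vanishes, and then letting $R\to\infty$. Your check that $\pi$ has a finite first moment (via bounded truncations and total variation convergence) fills in a step the paper leaves implicit, namely that the tail integral of the limit $\pi$ also vanishes as $R\to\infty$.
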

Note that a sufficient condition for \eqref{eq:precompact} is $\sup_{n\in \N} \int (|x|+|y|)^{1+s}\,\pi^n (\de x, \de y)<\infty$ for some $s>0$. This yields the following corollary:
\begin{corollary}\label{cor:continuity}
    Assume that $\pi^n,\pi$ are absolutely continuous w.r.t. the Lebesgue measure with densities $f_n, f$, and $f_n \to f$ a.e. as well as $\sup_{n\in \N} \int (|x|+|y|)^{1+s}\,\pi^n (\de x, \de y)<\infty$ for some $s>0$. Then $\mathcal{AW}(\pi^n,\pi)\to 0.$
\end{corollary}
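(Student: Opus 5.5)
The plan is to reduce the claim to the preceding corollary, which states that $\operatorname{TV}(\pi^n,\pi)\to 0$ together with $\mathcal{W}$-precompactness \eqref{eq:precompact} implies $\mathcal{AW}(\pi^n,\pi)\to 0$. We therefore have to verify two things: total variation convergence, and the uniform integrability condition \eqref{eq:precompact}.

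\textbf{Step 1: total variation convergence.} Since $\pi^n$ and $\pi$ have Lebesgue densities $f_n$ and $f$, we have
\begin{align*}
\operatorname{TV}(\pi^n,\pi)=\tfrac12\int_{\R^2}|f_n-f|\,\de x\,\de y.
\end{align*}
We invoke Scheffé's lemma. The functions $f_n,f$ are nonnegative with $\int f_n=\int f=1$, and $f_n\to f$ a.e. Hence $(f-f_n)^+\le f$ and $(f-f_n)^+\to 0$ a.e., so dominated convergence gives $\int (f-f_n)^+\to 0$. Because $\int (f_n-f)=0$, we get $\int|f_n-f| = 2\int (f-f_n)^+\to 0$. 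Thus $\operatorname{TV}(\pi^n,\pi)\to 0$.

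\textbf{Step 2: precompactness.} Set $M:=\sup_n\int(|x|+|y|)^{1+s}\,\de\pi^n<\infty$. On the set $\{|x|+|y|\ge R\}$ we have $|x|+|y|\le (|x|+|y|)^{1+s}R^{-s}$, so
\begin{align*}
\sup_n\int_{\{|x|+|y|\ge R\}}(|x|+|y|)\,\pi^n(\de x,\de y)\le M R^{-s}\to 0 \quad (R\to\infty).
\end{align*}
This is exactly \eqref{eq:precompact}. The same moment bound also holds for the limit $\pi$: by Fatou's lemma applied to $(|x|+|y|)^{1+s}f_n$, we get $\int (|x|+|y|)^{1+s}\,\de\pi\le M$. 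This gives $\int(|x|+|y|)\,\de\pi<\infty$, so $\mathcal{AW}(\pi^n,\pi)$ is finite and the tail term for $\pi$ in the preceding lemma also vanishes as $R\to\infty$.

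\textbf{Step 3: conclusion.} We apply the preceding lemma with $\pi^n$ and $\pi$:
\begin{align*}
\mathcal{AW}(\pi^n,\pi)\le 8\int_{\{|x|+|y|\ge R\}}(|x|+|y|)\,(\pi^n+\pi)(\de x,\de y)+3R\operatorname{TV}(\pi^n,\pi).
\end{align*}
By Step 2, the tail term is at most $16MR^{-s}$ uniformly in $n$. Take $\limsup_n$ for fixed $R$: the TV term vanishes by Step 1, leaving $\limsup_n \mathcal{AW}(\pi^n,\pi)\le 16MR^{-s}$. Letting $R\to\infty$ gives the claim.

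The only step with any subtlety is Step 1 (Scheffé's lemma). It needs nothing beyond a.e. convergence and the fact that the $f_n$ are probability densities; everything else is a direct application of the preceding lemma and corollary.
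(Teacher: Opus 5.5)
Your proposal is correct and follows the same route as the paper. The paper's one-line justification is: Scheffé's lemma gives $\operatorname{TV}(\pi^n,\pi)\to 0$, the $(1+s)$-moment bound gives \eqref{eq:precompact}, and the preceding corollary finishes. Your Fatou argument bounding the $(1+s)$-moment of the limit $\pi$ usefully makes explicit a point the paper leaves implicit, namely that the tail term of $\pi$ itself must also vanish as $R\to\infty$.
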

Note that Corollary \ref{cor:continuity} applies in particular to absolutely continuous
\begin{itemize}
    \item Elliptical distributions, 
    \item Exponential distributions,
    \item Location-scale families of distributions,
    \item Copula-based parametric families of distributions.
\end{itemize}

\section{Estimation}\label{sec:estimation}

As discussed in the Introduction, many dependence measures satisfying Axioms \ref{axiom1}--\ref{axiom3} are defined via conditional distributions. Since the classical empirical copula process fails to estimate these distributions adequately, modified estimators based e.g. on binning methods are necessary. In this section, we first recall such a binning method as introduced in the adapted optimal transport literature. We then apply it to derive a new copula estimator. Subsequently we compare it to the rank-based checkerboard estimator. It will turn out that both estimators have the same rates of convergence under regularity assumptions.

\subsection{Setting and preliminary estimates} \label{sec:1}

In Sections \ref{sec:1} and \ref{sec:2} we consider $\pi\in \mathcal{P}_c([0,1]^2)$. For an i.i.d.~sample \((X_l,Y_l)_{l=1}^N\) from \(\pi\), we denote the \emph{adapted empirical measure} by
\begin{align}\label{eq:adapted_empirical}
    \hat{\pi}^N := \frac 1 N \sum_{l=1}^N \delta_{(\varphi^N(X_l),\varphi^N(Y_l))}
\end{align}
for each \(N\geq 1\); see \cite{Backhoff-Bartl-Beiglboeck-Wiesel-2022}. In \eqref{eq:adapted_empirical}, the binning function \(\varphi^N\colon [0,1] \to [0,1]\) is defined as follows: recalling\footnote{Assuming for notational simplicity that $N^{1/3}$ is an integer; otherwise we replace it by \(\lfloor N^{1/3}\rfloor\).} \eqref{eq:square}, for $n:=N^{1/3}$, take a partition of \([0,1]^2\) into the disjoint union of the cubes
\begin{align*}
S_{ij}^n=\Big(\frac{i-1}{n}, \frac{i}{n} \Big] \times \Big(\frac{j-1}{n}, \frac{j}{n} \Big],\qquad i,j=1, \dots, n,
\end{align*}
with edge length \(N^{-1/3}\). For the intervals $S_i^n:= ((i-1)/n, i/n]$, $i=1, \dots,n$, the function $\varphi^N$ has finite range denoted by $(\varphi^N(S_1^n), \dots, \varphi^N(S_{n}^n))$ and maps each cube $S_{ij}^n$ to its center $(\varphi^N(S_i^n), \varphi^N(S_j^n))$.\footnote{We make the convention  that $\varphi^N(0)=1/(2n).$} By definition we have
\begin{align}\label{subcube_bound}
    \sup_{u\in [0,1]} |u - \varphi^N(u)| \leq c N^{-1/3}.
\end{align}
The adapted empirical measure is a strongly consistent estimator in adapted Wasserstein distance, i.e.~
\begin{align}\label{asconv_AEM}
    \cAW(\hat{\pi}^N, \pi) \to 0 \qquad \P\text{-almost surely};
\end{align}
see \cite[Theorem 1.3]{Backhoff-Bartl-Beiglboeck-Wiesel-2022}. If $\pi$ has regular kernels, we can also obtain a convergence speed.

\begin{definition}\label{def:lipschitz_kernel}
We say that a probability measure $\pi\in \mathcal{P}([0,1]^2)$ has \emph{Lipschitz kernels}, if there exists a version of the kernel \((\pi_x)\) such that the mapping
\begin{align}\label{ass_Lip_kernel}
   [0,1]\ni  x\mapsto \pi_x \in \mathcal{P}([0,1]) \quad \text{ is Lipschitz-continuous}
\end{align}
with respect to the Wasserstein distance $\mathcal{W}$ on \(\mathcal{P}([0,1])\). 
\end{definition}
In fact, if $\pi$ has Lipschitz kernels \eqref{ass_Lip_kernel}, then 
\begin{align}\label{eq:lipschitz_kernel_est}
    \E\left[\cAW(\hat{\pi}^N,\pi)\right] \leq c N^{-1/3};
\end{align}
see \cite[Theorem 1.5]{Backhoff-Bartl-Beiglboeck-Wiesel-2022}.

In order to apply the results from Section \ref{sec:comparison}, we need to modify $\hat\pi^N$ in such a way that it has continuous marginal distributions. 
We thus denote by $\hat{\pi}_c^N$ a continuous modification of $\hat{\pi}^N$, obtained by uniformly distributing the mass concentrated at each center over its corresponding subcube, i.e.
\begin{align}\label{eq:def_cont}
    \hat{\pi}_c^N := \sum_{i,j=1}^{n}  \hat{\pi}^N(S_{ij}^n) \cdot \mathcal{U}^{S_{ij}^n}.
\end{align}
We call $\hat{\pi}_c^N$ the \emph{continuous adapted empirical measure.}
Note that $\hat{\pi}_c^N$ has continuous marginals (that are not necessarily equal to $\mathcal{U}$) and, recalling \eqref{eq:sklar}, thus possesses a unique induced copula that we denote by
\begin{align}\label{def_cop_est}
    \hat{C}_N := C_{\hat{\pi}^N_c}.
\end{align}
We call $\hat{C}_N$ the \emph{adapted copula estimator}; see also Algorithm \ref{alg:ACE}.
\begin{algorithm}[t]
\caption{Construction of the adapted copula estimator}
\label{alg:ACE}
\begin{flushleft}
\textbf{Data:} An i.i.d.\ sample $(X_l, Y_l)_{l=1}^N$ from $\pi \in \mathcal{P}_c([0,1]^2)$\\
\textbf{Result:} The adapted copula estimator $\widehat{C}_N$
\end{flushleft}

\begin{enumerate}[label=(\arabic*), leftmargin=*]
    \item Set $n := \max(1, \lfloor N^{1/3} \rfloor)$ to define the grid resolution.
    
    \item For every $k = 1, \dots, N$, determine the corresponding subcube indices:\\[1mm]
    $\displaystyle i_k := \max(1,\lceil n X_k \rceil) , \quad j_k := \max(1,\lceil n Y_k \rceil).$
    
    \item Construct the empirical mass matrix $P \in \mathbb{R}^{n \times n}$ with entries\\[1mm]
    $\displaystyle P_{i,j} := \frac{1}{N} \sum_{k=1}^N \mathbf{1}_{\{i_k = i, j_k = j\}} \quad \text{for } i,j = 1, \dots, n.$
    
    \item Define the continuous adapted empirical measure $\hat{\pi}_c^N$ by uniformly distributing the mass $P_{i,j} = \hat{\pi}_N(S_{ij}^n)$ over the disjoint cubes $S^n_{ij} := \left(\frac{i-1}{n}, \frac{i}{n}\right] \times \left(\frac{j-1}{n}, \frac{j}{n}\right]$:\\[1mm]
    $\displaystyle \hat{\pi}_N^c := \sum_{i,j=1}^n P_{i,j} \cdot \mathcal{U}^{S_{ij}},$\\[1mm]
    where $\mathcal{U}^{S_{ij}}$ denotes the uniform distribution on $S_{ij}$.
    
    \item Define the adapted copula estimator $\widehat{C}_N$ as the unique copula induced via Sklar's theorem by $\hat{\pi}_c^N$:\\[1mm]
    $\displaystyle \widehat{C}_N := C_{\widehat{\pi}_c^N} = (F_{(\hat{\pi}_c^N)_1},F_{(\hat{\pi}_c^N)_2})_\#\hat{\pi}_c^N.$
\end{enumerate}
\end{algorithm}
Let us also recall that  \(C_\pi\) is the copula associated with $\pi$. We are now in position to investigate convergence of $\hat{\pi}^N_c$ to $\pi$ and $\hat{C}_N$ to $C_\pi.$ We start with the following strengthening of \eqref{asconv_AEM} and \eqref{eq:lipschitz_kernel_est}.
\begin{theorem} \label{lem:triangle_est}
We have
\begin{align}\label{asconv_AEM_c}
\mathcal{KR}(\hat{\pi}^N_c,\pi) \leq \mathcal{KR}(\hat{\pi}^N,\pi) + c N^{-1/3} \to 0 \qquad \P\text{-almost surely}.
\end{align}
Furthermore, if $\pi$ has Lipschitz kernels as defined in \eqref{ass_Lip_kernel}, we have
\begin{align}\label{eq_rate_conv_c}
    \E\left[\mathcal{KR}(\hat{\pi}^N_c,\pi)\right] \leq c N^{-1/3}.
\end{align}
\end{theorem}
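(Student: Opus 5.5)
The plan is to exploit that $\mathcal{KR}$ is a metric (see \cite{Beiglboeck-Pammer-Posch-2023}) and use the triangle inequality
\[
\mathcal{KR}(\hat{\pi}^N_c,\pi)\le \mathcal{KR}(\hat{\pi}^N_c,\hat{\pi}^N)+\mathcal{KR}(\hat{\pi}^N,\pi).
\]
This reduces the first claim to a deterministic bound $\mathcal{KR}(\hat{\pi}^N_c,\hat{\pi}^N)\le cN^{-1/3}$.

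\emph{Smoothing error.} Let $F_c$ and $\hat F$ denote the cdfs of the first marginals of $\hat{\pi}^N_c$ and $\hat{\pi}^N$. By construction, the mass $\hat{\pi}^N_1(\{\varphi^N(S_i^n)\})$ is spread uniformly over $S_i^n$. Hence for every $u\in(0,1)$ the quantile $F_c^{-1}(u)$ lies in some bin $S_i^n$ (or its closure), while $\hat F^{-1}(u)=\varphi^N(S_i^n)$ is the center of that same bin. Therefore $|F_c^{-1}(u)-\hat F^{-1}(u)|\le 1/(2n)$.

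Next, for $x\in S_i^n$ the conditional law $(\hat{\pi}^N_c)_x$ equals $\sum_j \hat{\pi}^N_{\varphi^N(S^n_i)}(\{\varphi^N(S^n_j)\})\,\mathcal{U}^{S^n_j}$. This is the conditional law of $\hat\pi^N$ at the bin center, with each atom smeared over an interval of length $1/n$ containing it. Coupling each atom with its own uniform piece gives
\[
\mathcal{W}\big((\hat{\pi}^N_c)_{F_c^{-1}(u)},\hat{\pi}^N_{\hat F^{-1}(u)}\big)\le 1/(2n).
\]
Plugging both bounds into \eqref{eq:KR} yields $\mathcal{KR}(\hat{\pi}^N_c,\hat{\pi}^N)\le 1/n=N^{-1/3}$. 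The almost sure convergence then follows from \eqref{asconv_AEM} combined with Lemma \ref{lem:beiglboeck-pammer}, which upgrades $\mathcal{AW}(\hat\pi^N,\pi)\to0$ to $\mathcal{KR}(\hat\pi^N,\pi)\to 0$ pathwise.

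\emph{Rate.} By the first part it suffices to show $\E[\mathcal{KR}(\hat{\pi}^N,\pi)]\le cN^{-1/3}$ under Lipschitz kernels with constant $L$. Write $F,\hat F$ for the first-marginal cdfs of $\pi,\hat\pi^N$. For the second-coordinate term in \eqref{eq:KR}, insert $\pi_{\hat F^{-1}(u)}$ and use the Lipschitz property:
\[
\mathcal{W}\big(\pi_{F^{-1}(u)},\hat{\pi}^N_{\hat F^{-1}(u)}\big)\le L\,|F^{-1}(u)-\hat F^{-1}(u)|+\mathcal{W}\big(\pi_{\hat F^{-1}(u)},\hat{\pi}^N_{\hat F^{-1}(u)}\big).
\]
Integrating in $u$ gives
\[
\mathcal{KR}(\hat\pi^N,\pi)\le (1+L)\,\mathcal{W}(\pi_1,\hat\pi^N_1)+\int \mathcal{W}(\pi_x,\hat\pi^N_x)\,\hat\pi^N_1(\de x).
\]
For the first term we use $\mathcal{W}(\pi_1,\hat\pi^N_1)\le \mathcal{W}(\pi_1,\frac1N\sum_l\delta_{X_l})+1/(2n)$. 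The standard one-dimensional empirical rate then gives expectation at most $cN^{-1/2}+cN^{-1/3}$.

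The second term is a sum over bins $i$ of $\hat\pi^N_1(\{\varphi^N(S^n_i)\})$ times $\mathcal{W}(\pi_{\varphi^N(S_i^n)},\hat\pi^N_{\varphi^N(S_i^n)})$. Here $\hat\pi^N_{\varphi^N(S_i^n)}$ is the binned empirical law of the $Y_l$ with $X_l\in S_i^n$. Conditionally on the bin counts, those $Y_l$ are i.i.d.\ from the average kernel $\frac{1}{\pi_1(S_i^n)}\int_{S_i^n}\pi_x\,\pi_1(\de x)$. That average kernel is within $L/n$ of $\pi_{\varphi^N(S_i^n)}$ in $\mathcal{W}$ by the Lipschitz property and convexity of $\mathcal{W}$. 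The remaining terms are the one-dimensional empirical error $\lesssim (\#\text{points in bin})^{-1/2}$ plus the $Y$-binning error $1/(2n)$.

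\emph{Main obstacle.} I expect the hard step to be summing these bin-wise errors in expectation. This is precisely the estimate carried out in the proof of \cite[Theorem 1.5]{Backhoff-Bartl-Beiglboeck-Wiesel-2022}, and I would reuse it. By Jensen, $\sum_i \E\big[\hat\pi^N_1(S_i^n)\,(N\hat\pi^N_1(S^n_i))^{-1/2}\big]\le \sum_i \sqrt{\pi_1(S_i^n)/N}\le \sqrt{n/N}=N^{-1/3}$. Together with the $L/n$ and $1/(2n)$ terms this yields the overall $cN^{-1/3}$ bound.
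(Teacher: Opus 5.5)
Your proposal is correct and follows the paper's proof. The paper uses the triangle inequality for $\mathcal{KR}$ and a deterministic $O(N^{-1/3})$ bound on $\mathcal{KR}(\hat\pi^N_c,\hat\pi^N)$ via same-bin quantiles and bin-wise constant kernels, then transfers the Backhoff--Bartl--Beiglb\"ock--Wiesel argument to $\mathcal{KR}$. You carry that last step out explicitly, through the bound $(1+L)\mathcal{W}(\pi_1,\hat\pi^N_1)+\int\mathcal{W}(\pi_x,\hat\pi^N_x)\,\hat\pi^N_1(\de x)$ and the bin-wise Jensen/Cauchy--Schwarz sum, where the paper only cites it. Your one minor deviation is to get almost sure convergence of $\mathcal{KR}(\hat\pi^N,\pi)$ from \eqref{asconv_AEM} and Lemma \ref{lem:beiglboeck-pammer} instead of rerunning that argument, which is equally valid.
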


\begin{proof}
Using the triangle inequality for $\mathcal{KR}$ we obtain
\begin{align*}
    \mathcal{KR}(\hat{\pi}^N_c,\pi) \leq \mathcal{KR}(\hat{\pi}^N_c,\hat{\pi}^N) +\mathcal{KR}(\hat{\pi}^N,\pi).
\end{align*}
In order to estimate $\mathcal{KR}(\hat{\pi}^N_c,\hat{\pi}^N)$, we note that 
\begin{align*}
    \hat{\pi}^N := \sum_{i,j=1}^{N^{1/3}} \hat{\pi}^N(S_{ij}^n) \cdot \delta_{(\varphi^N(S_i^n),\varphi^N(S_j^n))} .
\end{align*}
Let now $(X^N,Y^N)\sim \hat{\pi}^N_c, (\widetilde X^N , \widetilde Y^N)\sim \hat{\pi}^N.$
Recalling $S_i^n= ((i-1)/n, i/n]$, $i=1, \dots,n$, $\varphi^N(S_i^n)\in S_i^n$ and \eqref{eq:def_cont}, note that 
\begin{align}\label{eq:exact}
F_{X^N}^{-1}(u) \in S_i \quad \Leftrightarrow \quad F_{\widetilde X^N}^{-1}(u) \in S_i
\end{align}
for all $u\in [0,1]$ and $i=1, \dots, n.$ Thus
%the coupling $$\gamma(\de x, \de \tilde x)= \sum_{i=1}^{N^{1/3}} \hat{\pi}^N( S_i^n\times [0,1]) \cdot (\delta_{\varphi^N(S_i^n)}\otimes \mathcal{U}(S_{i}))(\de x, \de \tilde x)$$
%and note that $\gamma \in \Pi(\hat{\pi}^N_1, (\hat{\pi}^N_c)_1)$. We can thus use $\gamma$ to estimate the adapted Wasserstein distance via 
\begin{align*}
    \mathcal{KR}(\hat{\pi}^N_c,\hat{\pi}^N)&= \int \big|F_{X^N}^{-1}(u)-F_{\widetilde X^N}^{-1}(u)\big| +\mathcal{W}\Big((\hat{\pi}^N_c)_{F_{X^N}^{-1}(u)}, (\hat{\pi}^N)_{F_{\widetilde X^N}^{-1}(u)}\Big)\,\de u\\
    &\stackrel{\eqref{eq:exact},\eqref{subcube_bound}}{\le} c N^{-1/3}  +\int \mathcal{W}((\hat{\pi}^N_c)_x, (\hat{\pi}^N)_{x})\, \hat{\pi}^N(\de x),
\end{align*}
where the second inequality follows from the fact that both $x\mapsto(\hat{\pi}^N_c)_x$ and $x\mapsto (\hat{\pi}^N)_{x}$ are constant on each $S_i^n,$ $i=1, \dots, n$ by \eqref{eq:adapted_empirical} and \eqref{eq:def_cont}. Next we note that $\mathcal{W}((\hat{\pi}^N_c)_x, (\hat{\pi}^N)_{x})\le cN^{-1/3}$ again by \eqref{subcube_bound}.
It thus remains to estimate $\mathcal{KR}(\hat{\pi}^N, \pi)$. This can be done by the  methods of \cite{Backhoff-Bartl-Beiglboeck-Wiesel-2022}, noting that \cite[Lemma 3.1, Lemma 5.1]{Backhoff-Bartl-Beiglboeck-Wiesel-2022} still hold when $\mathcal{AW}$ is replaced by $\mathcal{KR}.$ The remainder of the proof in \cite{Backhoff-Bartl-Beiglboeck-Wiesel-2022} remains unchanged. In particular this shows $\E[\mathcal{KR}(\hat{\pi}^N, \pi)]\le cN^{-1/3}$ holds if $\pi$ has Lipschitz kernels. The claim follows. 
\end{proof}

\subsection{Empirical convergence and rates}\label{sec:2}

As stated in \eqref{asconv_AEM_c} we have  $\mathcal{KR}(\hat\pi_c^N, \pi)\to 0$. Then by Proposition \ref{prop:comparison}
\begin{align}\label{eq:adapted_rearranged}
    \mathcal{KR}((\hat\pi^N_c)^\uparrow, \pi^\uparrow)\le \mathcal{KR}(\hat\pi^N_c, \pi) \to 0
\end{align}
where $(\hat\pi^N_c)^\uparrow$, $\pi^\uparrow$ are the increasing rearrangements of $\hat\pi^N_c$ and $\pi$ respectively. Similarly, 
\begin{align}\label{eq:adapted_rearranged2}
     \E[\mathcal{KR}((\hat\pi^N_c)^\uparrow, \pi^\uparrow)]\le \E[\mathcal{KR}(\hat\pi^N_c, \pi)] \le cN^{-1/3}
\end{align}
follows from \eqref{eq_rate_conv_c}, if $\pi$ has Lipschitz kernels \eqref{ass_Lip_kernel}. Next we define 
\begin{align}\label{def_Incr_Rearr_Adap_Emp_Cop}
    \hat{C}_N^\uparrow:= C_{(\hat{\pi}_c^N)^\uparrow} =C_{\hat{\pi}_c^N}^\uparrow,
\end{align}
recalling Lemma \ref{lem:commute}. Then we obtain the following result. 
%Equation \eqref{eq:adapted_rearranged} then implies the following.
\begin{theorem}\label{thm_conv_C_N}
The adapted copula estimator \(\hat{C}_N\) satisfies $\P$-almost surely:
    \begin{enumerate}[label = (\roman*)]
        \item \label{thm_conv_C_N1} \(\|\hat{C}_N -C_\pi\|_\infty\to 0\),
        \item \label{thm_conv_C_N3} \(\|\hat{C}_N^\uparrow- C^\uparrow_\pi\|_\infty\to 0\),
        \item \label{thm_conv_C_N2} \(d_1(\hat{C}_N^\uparrow, C_\pi^\uparrow)\to 0\),
    \end{enumerate}
If $y \mapsto F_Y(y)$ is Lipschitz continuous for $Y\sim \pi_2$, then we also have 
\begin{enumerate}[label = (iv)]
    \item \(d_1(\hat{C}_N ,C_\pi)\to 0\).
\end{enumerate}
\end{theorem}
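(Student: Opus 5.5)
We derive everything from the almost sure convergence $\mathcal{KR}(\hat\pi^N_c,\pi)\to 0$ in \eqref{asconv_AEM_c} and its consequence \eqref{eq:adapted_rearranged}. The plan is to transfer convergence from the couplings $\hat\pi^N_c,\pi$, whose marginals need not be uniform, to their copulas. Throughout we fix an $\omega$ in the full-measure set on which \eqref{asconv_AEM_c} holds.

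\emph{Step 1, part (i).} Since $\mathcal{W}\le\mathcal{KR}$ by \eqref{eq: general_inequalities}, we have $\hat\pi^N_c\to\pi$ weakly. Because $\pi\in\mathcal P_c([0,1]^2)$, its marginal cdfs $F_X,F_Y$ are continuous. Weak convergence of the marginals then gives uniform convergence of the marginal cdfs $\hat F_{1}\to F_X$ and $\hat F_2\to F_Y$ by P\'olya's theorem (Lemma \ref{lem:polya}). It also gives uniform convergence of the joint cdfs $\hat F\to F_\pi$, since the limit is continuous on the compact set $[0,1]^2$.

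Write $\hat F=\hat C_N(\hat F_1,\hat F_2)$. Using that copulas are $1$-Lipschitz in each coordinate, we obtain
\[
|\hat C_N(F_X(x),F_Y(y))-C_\pi(F_X(x),F_Y(y))|\le \|\hat F-F_\pi\|_\infty+\|\hat F_1-F_X\|_\infty+\|\hat F_2-F_Y\|_\infty .
\]
Since $F_X,F_Y$ are continuous and onto $[0,1]$, every $(u,v)$ is of the form $(F_X(x),F_Y(y))$, and (i) follows.

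\emph{Step 2, parts (ii) and (iii).} By Lemma \ref{lem:commute}, $\hat C_N^\uparrow=C_{\hat\pi^N_c}^\uparrow$ and $C_\pi^\uparrow=C_{\pi^\uparrow}$. The rearrangements $(\hat\pi^N_c)^\uparrow\to\pi^\uparrow$ converge in $\mathcal{KR}$ by \eqref{eq:adapted_rearranged}, hence weakly, and they keep the same marginals as $\hat\pi^N_c$ and $\pi$. Repeating Step 1 for these couplings gives (ii).

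For (iii), note that $\hat C_N^\uparrow$ and $C_\pi^\uparrow$ are SI copulas by Lemma \ref{lem_char_incRearr}\ref{lem_char_incRearr3}. Lemma \ref{lem:delta_1}, applied to the induced couplings in $\Pi^\uparrow(\mathcal U,\mathcal U)$, upgrades uniform convergence to $d_1$-convergence. Alternatively one can use Corollary \ref{cor:si}.

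\emph{Step 3, part (iv), the main obstacle.} Without the SI structure, uniform convergence of copulas does not imply $d_1$-convergence, so we must pass directly from $\mathcal{KR}$-convergence of $\hat\pi^N_c\to\pi$ to that of the copula couplings. The plan is to push forward by the marginal cdfs. Set $\hat\rho^N:=(\hat F_1,\hat F_2)_\#\hat\pi^N_c$ and $\rho:=(F_X,F_Y)_\#\pi$, the couplings induced by $\hat C_N$ and $C_\pi$. By Lemma \ref{lem:KR}, $d_1(\hat C_N,C_\pi)=\mathcal{KR}(\hat\rho^N,\rho)$. In the first coordinate, the quantile coupling is preserved under the monotone maps, so the conditional laws of $\hat\rho^N$ at level $u$ are $(\hat F_2)_\#(\hat\pi^N_c)_{\hat F_1^{-1}(u)}$, and similarly for $\rho$. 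We then bound
\[
\mathcal{W}\big((\hat F_2)_\#\mu,(F_Y)_\#\nu\big)\le \mathcal{W}\big((\hat F_2)_\#\mu,(F_Y)_\#\mu\big)+\mathcal{W}\big((F_Y)_\#\mu,(F_Y)_\#\nu\big)\le \|\hat F_2-F_Y\|_\infty+L\,\mathcal{W}(\mu,\nu),
\]
where the last inequality uses the Lipschitz constant $L$ of $F_Y$; this is exactly where the extra hypothesis enters. Integrating over $u$ gives
\[
d_1(\hat C_N,C_\pi)\le \|\hat F_2-F_Y\|_\infty+L\,\mathcal{KR}(\hat\pi^N_c,\pi)\to 0 .
\]
The delicate point is to justify the identification of the conditional kernels of the pushforward under $\hat F_1$. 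For this we plan to use that $\hat F_1$ is continuous and strictly increasing on the support pieces, so it maps quantile levels consistently and gives $\hat F_1(\hat F_1^{-1}(u))=u$, and that $F_X$ is continuous. No Lipschitz condition is needed in the first coordinate, because $\mathcal{KR}$ already couples the first coordinates through the common uniform level $u$.
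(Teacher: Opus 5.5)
Your proposal is correct and follows the paper's proof: (i)–(ii) come from weak convergence via $\mathcal{W}\le\mathcal{KR}$ and \eqref{eq:adapted_rearranged} plus P\'olya's lemma, (iii) from Lemma \ref{lem:delta_1} for SI copulas, and (iv) from Lemma \ref{lem:KR} together with the bound $d_1(\hat C_N,C_\pi)\le \|\hat F_2-F_Y\|_\infty+L\,\mathcal{KR}(\hat\pi^N_c,\pi)$. The paper obtains this last bound as \eqref{eq:easy} via Lemma \ref{lem:general_marginals}. The only difference is that you prove inline the copula part of Lemma \ref{lem:polya} and the kernel-identification and triangle-inequality argument of Lemma \ref{lem:general_marginals}, rather than citing them.
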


%\begin{remark}
In Section \ref{sec:comp_checkerboard} we discuss in more detail, how $\hat C_N$ compares to classical estimation techniques in dependence modeling and in particular to the checkerboard estimator.
%\end{remark}

\begin{proof}[Proof of Theorem \ref{thm_conv_C_N}.]
    Let us first point out that results from Section \ref{sec:comparison} are not directly applicable as $\hat{\pi}^N_c, (\hat{\pi}^N_c)^\uparrow\notin \Pi(\mathcal{U}, \mathcal{U}).$ 
    We first show $(i)$: equation \eqref{asconv_AEM_c} implies in particular $\mathcal{W}(\hat{\pi}_c^N,\pi)\to 0$ $\P$-almost surely. Since \(\hat{\pi}_c^N\) and \(\pi\) have continuous distribution functions, Lemma \ref{lem:polya} yields $(i)$. For $(ii)$ we argue as in $(i)$ noting that \eqref{eq:adapted_rearranged} implies \(\mathcal{W}( \hat{\pi}_c^{N\uparrow},\pi^\uparrow)\to 0.\) Statement $(iii)$ follows from Lemma \ref{lem:delta_1}, as $(\hat{\pi}_c^N)^\uparrow, \pi^\uparrow$ are SI. For $(iv)$, let $(X,Y)\sim \pi, (X^N, Y^N)\sim \hat{\pi}_c^N$ and recall that $$d_1(\hat{C}_N, C_\pi)= \mathcal{KR}((F_{X^N}, F_{Y^N})_{\#}\hat{\pi}_c^N, (F_X, F_Y)_{\#} \pi)$$ from Lemma \ref{lem:KR}. Lemma \ref{lem:general_marginals} yields
    \begin{align}\label{eq:easy} 
        \mathcal{KR}((F_{X^N}, F_{Y^N})_{\#}\hat{\pi}_c^N, (F_X, F_Y)_{\#} \pi) \le c\mathcal{KR}(\hat{\pi}_c^N, \pi) + \sup_{v\in [0,1]} |F_{Y^N}(v)-F_Y(v)|.
    \end{align}
    The first term in \eqref{eq:easy} converges to zero by  \eqref{asconv_AEM_c}. The second term converges to zero using \eqref{asconv_AEM_c}, \eqref{eq: general_inequalities} and Lemma \ref{lem:polya}.
\end{proof}

\begin{corollary}[Rates of convergence for \(\hat{C}_N\)]\label{thm_rates_C_N}
If $\pi$ has Lipschitz kernels \eqref{ass_Lip_kernel} and $y \mapsto F_Y(y)$ is Lipschitz continuous for $Y\sim \pi_2$, then 
\begin{align*}
\E[d_1(\hat{C}_N,C_\pi)] \le cN^{-1/3}.
\end{align*}
\end{corollary}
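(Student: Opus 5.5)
The plan is to follow the proof of part (iv) of Theorem \ref{thm_conv_C_N}, but now taking expectations. Let $(X,Y)\sim\pi$ and $(X^N,Y^N)\sim\hat\pi_c^N$. By Lemma \ref{lem:KR},
\[
d_1(\hat C_N,C_\pi)=\mathcal{KR}\big((F_{X^N},F_{Y^N})_\#\hat\pi_c^N,(F_X,F_Y)_\#\pi\big).
\]
Lemma \ref{lem:general_marginals} then gives the pathwise bound \eqref{eq:easy}:
\[
d_1(\hat C_N,C_\pi)\le c\,\mathcal{KR}(\hat\pi_c^N,\pi)+\sup_{v\in[0,1]}|F_{Y^N}(v)-F_Y(v)|.
\]
This bound uses the Lipschitz continuity of $F_Y$. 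Taking expectations, it suffices to bound the expectation of each term on the right by $cN^{-1/3}$.

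The first term is handled directly by \eqref{eq_rate_conv_c} in Theorem \ref{lem:triangle_est}. That result applies because $\pi$ has Lipschitz kernels, and it gives $\E[\mathcal{KR}(\hat\pi_c^N,\pi)]\le cN^{-1/3}$.

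The main work is the Kolmogorov term. It cannot be bounded through a Wasserstein distance, because the step from $\mathcal W$ to a sup-norm cdf bound loses a square root. Instead I will compare cdfs directly. $F_{Y^N}$ is the cdf of the second marginal of $\hat\pi_c^N$, which is the binned empirical measure of $Y_1,\dots,Y_N$ with mass spread uniformly on each bin $S_j^n$. At the grid points $v=j/n$, $F_{Y^N}(j/n)$ equals the empirical cdf $\hat F_N(j/n)$ of the $Y_l$. Between grid points both $F_{Y^N}$ and $F_Y$ are monotone. Hence for $v\in S_j^n$,
\[
|F_{Y^N}(v)-F_Y(v)|\le \max_{k\in\{j-1,j\}}|\hat F_N(k/n)-F_Y(k/n)|+|F_Y(j/n)-F_Y((j-1)/n)|.
\]
The last term is at most $L/n=LN^{-1/3}$ by the Lipschitz assumption on $F_Y$. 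Therefore
\[
\sup_v|F_{Y^N}(v)-F_Y(v)|\le \sup_v|\hat F_N(v)-F_Y(v)|+cN^{-1/3}.
\]
By the Dvoretzky–Kiefer–Wolfowitz inequality, or any standard Glivenko–Cantelli rate, $\E\sup_v|\hat F_N-F_Y|\le cN^{-1/2}\le cN^{-1/3}$.

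Combining the two estimates gives $\E[d_1(\hat C_N,C_\pi)]\le cN^{-1/3}$. The only delicate point is checking that the constant in \eqref{eq:easy} from Lemma \ref{lem:general_marginals} is deterministic, depending only on the Lipschitz constant of $F_Y$. It must not depend on the random marginals of $\hat\pi_c^N$, since otherwise the expectation cannot be taken termwise. If it did depend on $F_{X^N}$, I would use that $\pi_1$ is continuous and $F_{X^N}$ is piecewise linear on the grid, and redo the same interpolation argument for the first coordinate.
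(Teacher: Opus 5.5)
Your proof is correct and follows the paper's argument. Both use the pathwise bound \eqref{eq:easy} from Lemma \ref{lem:general_marginals}, then \eqref{eq_rate_conv_c} for the $\mathcal{KR}$ term, then DKW together with the Lipschitz property of $F_Y$ on bins of width $N^{-1/3}$ for the Kolmogorov term; your grid-point interpolation spells out the paper's intermediate comparison with $\varphi^N(Y)$. Your delicate point resolves favourably. Since $\mathcal{KR}$ is symmetric, you can apply Lemma \ref{lem:general_marginals} with $g=F_Y$ and $\tilde g=F_{Y^N}$, so the constant is the deterministic Lipschitz constant of $F_Y$, and $F_{X^N}$ never enters because the first coordinates are handled by the quantile coupling.
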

\begin{proof}
Recall from \eqref{eq_rate_conv_c} that $\E\left[\mathcal{KR}(\hat{\pi}^N_c,\pi)\right] \leq c N^{-1/3}$. Now let $(X,Y)\sim \pi, (X^N, Y^N)\sim \hat\pi^N_c$. From \eqref{eq:easy} we have
%\begin{align*}
%d_1(\hat{C}_N, C_{\pi})&=\int_{[0,1]^2} |\partial_1 C_{\pi}(u,v)-\partial_1 \hat{C}_N(u,v)|\,\de u \de v \\
%&=\int_{[0,1]^2} \big|F_{ Y| X= F_X^{-1}(u)}(v)-F_{Y^N|X^N=F_{X^N}^{-1}(u)}(F^{-1}_{Y^N}(v))\big|\,\de v \de u\\
%&\stackrel{\eqref{eq:easy}}{\le}  \mathcal{KR}(\hat\pi^N_c, \pi) +\sup_{v\in [0,1]} |F_{Y^N}(v)-v|\\
%&\stackrel{\eqref{eq_rate_conv_c}}{\le} cN^{-1/3} +\sup_{v\in [0,1]} |F_{Y^N}(v)-v|.
%\end{align*}
\begin{align*}
        d_1(\hat{C}_N, C_\pi)=\mathcal{KR}((F_{X^N}, F_{Y^N})_{\#}\hat{\pi}_c^N, (F_X, F_Y)_{\#} \pi) \le c\mathcal{KR}(\hat{\pi}_c^N, \pi) + \sup_{v\in [0,1]} |F_{Y^N}(v)-F_Y(v)|.
\end{align*}
Let $\widetilde Y^N\sim \varphi^N(Y)$. Applying now the Dvoretzky–Kiefer–Wolfowitz inequality, the Lipschitz property of $F_Y$ and \eqref{subcube_bound}, we conclude
\begin{align*}
    \E[d_1(C_{\pi}, \hat{C}_N)]&\le cN^{-1/3} +\E\Big[\sup_{v\in [0,1]} |F_{Y^N}(v)-F_Y(v)      |\Big]\\
    &\le cN^{-1/3} + \E\Big[\sup_{v\in [0,1]}|F_{Y^N}(v)-F_{\widetilde Y^N}(v)|\Big]  +  \sup_{v\in [0,1]}|F_{\widetilde Y^N}(v)-F_Y(v)| \\
    &\le cN^{-1/3}+ cN^{-1/2}+cN^{-1/3}\le c N^{-1/3},
\end{align*}
as claimed.
\end{proof}

\subsection{Comparison of $\hat{C}_N$ and checkerboard copula estimator} \label{sec:comp_checkerboard}
In this section, we assume $\pi \in \mathcal{P}_c(\R^2).$
In Theorem \ref{thm_conv_C_N} and Corollary \ref{thm_rates_C_N} we have established convergence and rates of the adapted copula estimator. The aim of this section is to compare this approach to the checkerboard copula estimator, and to find upper bounds for its convergence rate.

For this, let us first recall the setting of \cite[Section 3]{strothmann2022}. In particular, our aim is to define the checkerboard copula estimator $\hat{C}_N^{\#}$ with bandwidth $N_1=N_2=n$ (see \cite[(3.9)]{strothmann2022}) and relate it to an adapted empirical measure.

We proceed as follows: given the samples $(X_l,Y_l)_{l=1}^N$ it is natural to first define the $N$-checkerboard copula $\bar{C}_N:=C^{\#}[A_N]$, where
\begin{align*}
    a_{ij} = \begin{cases}
    1 & \exists\, l\in \{1,\dots, N\} \text{ such that } (\text{rank}(X_l), \text{rank}(Y_l))=(i,j),\\
    0 &\text{otherwise}.
    \end{cases}
\end{align*}
Formally, $\bar{C}_N$ can be obtained from the empirical copula $$\frac{1}{N}  \sum_{l=1}^N \mathbf{1}_{\big\{\frac{\text{rank}(X_l)}{N}\le u, \frac{\text{rank}(Y_l)}{N}\le v\big\}}$$
by uniformly distributing the mass at each point $(\text{rank}(X_l)/N, \text{rank}(Y_l)/N)$ on the cubes \(S_{ij}^N\).
As $n=N^{1/3}$ is assumed to be an integer, we obtain by \eqref{eq:CS_ij} that the $n$-checkerboard approximation of $\bar{C}_N$ is $\hat{C}^{\#}_N:=C^{\#}[A_n]$, an $n$-checkerboard copula, where \(A_n = (a_{ij})_{i,j=1}^n\) is given by
\begin{align}\label{eq:checkerboard_aij}
\begin{split}
    a_{ij} &= nV_{\bar C_N}(S_{ij}^n)\\
    &= n \frac{1}{N} |\{l\in \{1,\dots, N\}:\, (\text{rank}(X_l)/N, \text{rank}(Y_l)/N)\in S_{ij}^n \}|\\
    &=\frac{1}{n^2} |\{l\in\{1, \dots, n^3\}:\, (i-1)n^2< \text{rank}(X_l) \le in^2,\, (j-1)n^2< \text{rank}(Y_l) \le jn^2\}|.
\end{split}
\end{align}
In other words, the $n$-checkerboard approximation of $\bar{C}_N$ is the copula induced by the continuous adapted empirical measure of the observations
\begin{align*}
    \frac{(\text{rank}(X_l), \text{rank}(Y_l))}{N} =
    (F_{X^N}(X_l), F_{Y^N}(Y_l))  \qquad l=1, \dots, N,
\end{align*}
where $(X^N, Y^N)\sim \pi^N=1/N\sum_{l=1}^N \delta_{(X_l,Y_l)}.$ We call this measure the \emph{continuous empirical checkerboard measure}. It is formally defined as
\begin{align}\label{def_emp_checkerboard_measure}
    \hat{\pi}_c^{N,\#} &:= \sum_{i,j=1}^{n} \hat{\pi}^{N,\#}(S_{ij}^n) \cdot \cU^{S_{ij}^n},\\
    \nonumber\text{where} \qquad \hat{\pi}^{N,\#} &:= \frac 1 N \sum_{i=1}^N \delta_{(\varphi^N(\rank(X_i)/N),\varphi^N(\rank(Y_i)/N))}.
\end{align}
As \(n=N^{1/3}\) is assumed to be an integer, $\hat{\pi}_c^{N,\#}$ has uniform marginals and thus its distribution function is exactly $\hat{C}^{\#}_N$. Before deriving rates of convergence for the checkerboard estimator, we first recall strong consistency results for \(\hat{C}_N^\#\).

\begin{lemma}\label{lem_conv_C_N_check}
The checkerboard copula estimator \(\hat{C}_N^\#\) satisfies $\P$-almost surely:
    \begin{enumerate}[label = (\roman*)]
        \item \label{lem_conv_C_N_check1} \(\|\hat{C}_N^\# - C_\pi\|_\infty \to 0\),
        \item \label{lem_conv_C_N_check2} \(d_1(\hat{C}_N^\# ,C_\pi)\to 0\).
        \item \label{lem_conv_C_N_check3} \(\|(\hat{C}_N^\#)^\uparrow- C^\uparrow_\pi\|_\infty \to 0\),
        \item \label{lem_conv_C_N_check4} \(d_1((\hat{C}_N^\#)^\uparrow, C_\pi^\uparrow)\to 0\).
    \end{enumerate}
\end{lemma}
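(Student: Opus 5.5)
The plan is to reduce everything to $d_1$-convergence in \ref{lem_conv_C_N_check2} and then derive the other statements from the comparison results of Section \ref{sec:comparison}. The key observation is that $\hat{\pi}_c^{N,\#}\in\Pi(\cU,\cU)$, and since $C_\pi$ is invariant under the marginal transforms, $C_\pi$ is also the copula of $\pi^U:=(F_X,F_Y)_\#\pi\in\Pi(\cU,\cU)$. By Lemma \ref{lem:KR} we therefore have $d_1(\hat{C}_N^\#,C_\pi)=\mathcal{KR}(\hat{\pi}_c^{N,\#},\pi^U)$. Granting \ref{lem_conv_C_N_check2}, the rest follows quickly:
\begin{itemize}
\item \ref{lem_conv_C_N_check1} follows from \eqref{eq:l1_d1} together with the uniform Lipschitz property of copulas.
\item \ref{lem_conv_C_N_check4} follows from Corollary \ref{cor_incrRear}, which gives $d_1((\hat{C}_N^\#)^\uparrow,C_\pi^\uparrow)\le d_1(\hat{C}_N^\#,C_\pi)$, applied to couplings in $\Pi(\cU,\cU)$.
\item \ref{lem_conv_C_N_check3} then follows from \eqref{eq:l1_d1} again, or alternatively from Lemma \ref{lem:delta_1}.
\end{itemize}

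To prove \ref{lem_conv_C_N_check2}, I would compare $\hat{\pi}_c^{N,\#}$ with the continuous adapted empirical measure built from the \emph{true} probability-integral-transformed sample $(U_l,V_l):=(F_X(X_l),F_Y(Y_l))$. These points are i.i.d.\ from $\pi^U\in\Pi(\cU,\cU)\subset\cP_c([0,1]^2)$. Call this measure $\tilde\pi_c^N$ and set $\tilde C_N:=C_{\tilde\pi_c^N}$. Theorem \ref{thm_conv_C_N}(iv) applies to $\pi^U$, because its second marginal is uniform and hence has Lipschitz cdf. This gives $d_1(\tilde C_N,C_\pi)\to0$ almost surely. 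It therefore remains to show $d_1(\hat{C}_N^\#,\tilde C_N)\to0$ almost surely. Here the ranks satisfy $\rank(X_l)/N=F_{X^N}(X_l)$, and by Glivenko--Cantelli $\sup_l|F_{X^N}(X_l)-U_l|\le\|F_{X^N}-F_X\|_\infty\to0$, and similarly in the $y$-coordinate.

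The main obstacle is exactly this comparison. The $d_1$-metric is not weakly continuous, so closeness of the points alone is not enough: a point moving across a bin boundary changes the conditional law of a whole column. My plan is to handle this by counting. Let $\varepsilon_N:=\|F_{X^N}-F_X\|_\infty+\|F_{Y^N}-F_Y\|_\infty$. Since the points move by at most $\varepsilon_N$, only points within $\varepsilon_N$ of a bin boundary can change bins, and the fraction of such points is at most about $2n\varepsilon_N$ plus an empirical fluctuation term. The bound uses that the marginals of $\pi^U$ are uniform and a DKW/VC uniform bound over intervals. Consequently the two empirical bin-mass matrices differ in total variation by $O(n\varepsilon_N)+o(1)$.

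The difficulty is that $n\varepsilon_N\approx N^{1/3}\sqrt{\log\log N/N}\to0$ only thanks to the law of the iterated logarithm. This is fine almost surely, but it must be checked. Given this, I would bound $d_1$ by the conditional mass differences. Both $\partial_1$ of the two checkerboard-type copulas are column-wise averages of bin masses normalised by column masses. Summing $|\partial_1\hat C_N^\#-\partial_1\tilde C_N|$ over columns, weighted by column widths, gives at most a constant times the total variation of the mass matrices plus the differences of the column marginals. Both of these are $O(n\varepsilon_N)+o(1)\to0$.

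If this direct estimate proves too delicate, my fallback is to use Corollary \ref{cor:d1}: it suffices to show $\mathcal{AW}(\hat\pi_c^{N,\#},\tilde\pi_c^N)\to0$. That could be done with the coupling that pairs identical sample indices, plus a total-variation bound on the mislabelled fraction of mass, in the spirit of the lemma of \cite{acciaio2025estimating} recalled above.
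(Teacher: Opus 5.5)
Your proposal is correct. For (i), (iii) and (iv) it matches the paper, which deduces them from (ii) via Corollary~\ref{cor_incrRear} and the implications in Figures~\ref{fig:uu} and~\ref{fig:uparrow}. The difference is (ii): the paper does not prove it but cites \cite[Lemma 4.1]{Ansari-LFT-2023}.

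You instead derive (ii) inside the paper's own framework. You compare $\hat C_N^\#$ with the adapted copula estimator $\tilde C_N$ of the transformed sample $(F_X(X_l),F_Y(Y_l))$, which is i.i.d.\ from $\bar\pi=(F_X,F_Y)_\#\pi\in\Pi(\mathcal{U},\mathcal{U})$. This works. Only points whose transformed coordinate lies within $\varepsilon_N$ of a grid line can switch bins, so the two mass matrices differ by $O(n\varepsilon_N)$ in total variation. Moreover $n\varepsilon_N=O(N^{-1/6}\sqrt{\log\log N})\to0$ almost surely.

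One detail your direct $d_1$ estimate leaves implicit: $\tilde C_N$ lives on a non-uniform grid in copula coordinates, with column widths $p_i$ and row heights $q_j$. Its grid lines are displaced from $i/n$ by at most $\varepsilon_N$. Hence the region where the column indices disagree has measure at most $n\varepsilon_N$, and the row rescaling moves each conditional law by at most $\varepsilon_N$ in $\mathcal{W}$. So the bound still closes.

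Your fallback is in fact cleaner. Since $\hat\pi_c^{N,\#}$ and $\tilde\pi_c^N$ are piecewise uniform on the same squares $S^n_{ij}$, the total-variation bound recalled in Section~\ref{sec:comparison} gives $\mathcal{AW}(\hat\pi_c^{N,\#},\tilde\pi_c^N)\le c\,\mathrm{TV}=O(n\varepsilon_N)$. The triangle inequality and Corollary~\ref{cor:d1} then yield (ii), since both $\hat\pi_c^{N,\#}$ and $\bar\pi$ lie in $\Pi(\mathcal{U},\mathcal{U})$. This route needs only the $\mathcal{AW}$-consistency \eqref{asconv_AEM} plus the elementary bin-spreading estimate from the proof of Theorem~\ref{lem:triangle_est}. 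It does not need the $\mathcal{KR}$ extension of \cite{Backhoff-Bartl-Beiglboeck-Wiesel-2022} that Theorem~\ref{lem:triangle_est} only sketches.

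What your route buys is a self-contained proof and an explicit almost-sure bound $d_1(\hat C_N^\#,\tilde C_N)=O(n\varepsilon_N)$ between the two estimators. What it cannot give is a sharp rate. The total-variation step loses a factor $n$ and yields only $N^{-1/6}$ up to logarithms. That is why the paper's proof of Theorem~\ref{thm:rates} does not count bin switches. Instead it compares conditional laws in $\mathcal{W}$ within rank columns of fixed size $m=N^{2/3}$, at the cost of assuming Lipschitz kernels.
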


\begin{proof}
For the proof of \ref{lem_conv_C_N_check2}, see e.g.~\cite[Lemma 4.1]{Ansari-LFT-2023}. Then \ref{lem_conv_C_N_check4} follows from \ref{lem_conv_C_N_check2} using Corollary \ref{cor_incrRear}. Further, \ref{lem_conv_C_N_check2} implies \ref{lem_conv_C_N_check1} due to Figure \ref{fig:uu} and, similarly, \ref{lem_conv_C_N_check4} implies \ref{lem_conv_C_N_check3}.
\end{proof}

The following is a direct consequence of Lemma \ref{lem_conv_C_N_check} using the relations displayed in Figures \ref{fig:uu} and \ref{fig:uparrow}.

\begin{corollary}\label{cor:checkerboar}
    The continuous empirical checkerboard measure satisfies $\P$-almost surely: defining $\bar\pi=(F_X,F_Y)_{\#}\pi$ for $(X,Y)\sim \pi$ we have
    \begin{align*}
        \cIW(\hat{\pi}_c^{N,\#}, \bar\pi) &\to 0 , \quad &\cAW(\hat{\pi}_c^{N,\#}, \bar\pi) &\to 0, \quad \text{and} \quad &\mathcal{KR}(\hat{\pi}_c^{N,\#}, \bar\pi) &\to 0 \\
        \text{as well as} \quad \cIW((\hat{\pi}_c^{N,\#})^{\uparrow}, \bar\pi^\uparrow) &\to 0 , \quad &\cAW((\hat{\pi}_c^{N,\#})^\uparrow, \bar\pi^\uparrow) &\to 0, \quad \text{and} \quad &\mathcal{KR}((\hat{\pi}_c^{N,\#})^{\uparrow}, \bar\pi^\uparrow) &\to 0.
    \end{align*}
\end{corollary}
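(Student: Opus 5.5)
The plan is to reduce everything to statements about copulas on $\Pi(\mathcal{U},\mathcal{U})$ and then read off the six convergences from Lemma \ref{lem_conv_C_N_check} together with the comparison results of Section \ref{sec:comparison}. First I will note that both measures involved have uniform marginals. For $\hat{\pi}_c^{N,\#}$ this was remarked after \eqref{def_emp_checkerboard_measure}: since $n=N^{1/3}$ is an integer, each strip $S_i^n$ receives exactly $n^2$ of the ranks, so each row and column of $\hat{\pi}^{N,\#}$ has mass $1/n$. For $\bar\pi=(F_X,F_Y)_\#\pi$ it holds because $\pi\in\mathcal{P}_c(\R^2)$ has continuous marginals, so $F_X(X),F_Y(Y)\sim\mathcal{U}$. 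Moreover, the cdf of $\bar\pi$ is exactly $C_\pi$ by Sklar's theorem \eqref{eq:sklar}, and the cdf of $\hat{\pi}_c^{N,\#}$ is $\hat{C}_N^\#$. Hence $\bar\pi=\pi_{C_\pi}$ and $\hat{\pi}_c^{N,\#}=\pi_{\hat C_N^\#}$ are both in $\Pi(\mathcal{U},\mathcal{U})$.

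For the first line of the claim, I will apply Lemma \ref{lem:KR}, which gives
\begin{align*}
\mathcal{KR}(\hat{\pi}_c^{N,\#},\bar\pi)=d_1(\hat C_N^\#,C_\pi).
\end{align*}
By Lemma \ref{lem_conv_C_N_check}\ref{lem_conv_C_N_check2}, the right-hand side tends to $0$ almost surely. The chain \eqref{eq: general_inequalities}, namely $\mathcal{CW}\le\mathcal{AW}\le\mathcal{KR}$, then yields the convergence of $\cIW$ and $\cAW$ as well. All of these hold on the single almost sure event from Lemma \ref{lem_conv_C_N_check}.

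For the second line, I will use Lemma \ref{lem:commute} (or simply Definition \ref{def:inc_rea}(ii), since the measures are already in $\Pi(\mathcal{U},\mathcal{U})$). The rearrangements $(\hat{\pi}_c^{N,\#})^\uparrow$ and $\bar\pi^\uparrow$ keep uniform marginals, are SI, and have copulas $(\hat C_N^\#)^\uparrow$ and $C_{\bar\pi}^\uparrow=C_\pi^\uparrow$. Lemma \ref{lem:si} for SI measures then gives
\begin{align*}
\mathcal{CW}=\mathcal{AW}=\mathcal{KR}=d_1\big((\hat C_N^\#)^\uparrow,C_\pi^\uparrow\big).
\end{align*}
This tends to zero by Lemma \ref{lem_conv_C_N_check}\ref{lem_conv_C_N_check4}. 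Alternatively, Corollary \ref{cor_incrRear} bounds all three quantities by $d_1(\hat C_N^\#,C_\pi)$ directly.

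The only point needing care is the identification $C_{\bar\pi}=C_\pi$ together with $\bar\pi\in\Pi(\mathcal{U},\mathcal{U})$. This relies on continuity of the marginals of $\pi$, which is guaranteed by the standing assumption $\pi\in\mathcal{P}_c(\R^2)$. The rest is a direct chain of previously established equalities and inequalities.
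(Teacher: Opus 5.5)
Your proposal is correct and takes the same route as the paper, which derives the corollary directly from Lemma~\ref{lem_conv_C_N_check} together with the topology comparisons on $\Pi(\mathcal{U},\mathcal{U})$ and $\Pi^\uparrow(\mathcal{U},\mathcal{U})$ summarized in Figures~\ref{fig:uu} and~\ref{fig:uparrow}. You make those steps quantitative via Lemma~\ref{lem:KR}, the chain \eqref{eq: general_inequalities}, and Lemma~\ref{lem:si} (or Corollary~\ref{cor_incrRear}), and you spell out the identifications $\bar\pi=\pi_{C_\pi}$ and $\hat{\pi}_c^{N,\#}=\pi_{\hat C_N^\#}$ that the paper leaves implicit.
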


We now derive convergence rates for the checkerboard estimator $\hat{C}^{\#}_N$ and its associated coupling $\hat{\pi}_c^{N,\#}$ with respect to metrics that account for weak convergence of conditional distributions. To the best of our knowledge, these are the first such rates in the literature. In particular, our results resolve an open question of \cite[Section 5]{strothmann2022}.

\begin{theorem}\label{thm:rates}
    Defining $\bar \pi = (F_X,F_Y)_{\#}\pi$  for $(X,Y)\sim \pi\in \mathcal{P}_c(\R^2)$, assume that $\bar\pi$ has Lipschitz kernels. Then
    \begin{align}\label{eq:checkerboard_est}
        \E[\mathcal{CW}(\hat{\pi}_c^{N,\#}, \bar\pi)]\le \E[\mathcal{AW}(\hat{\pi}_c^{N,\#}, \bar\pi)]\le \E[\mathcal{KR}(\hat{\pi}_c^{N,\#}, \bar\pi)]\le cN^{-1/3}.
    \end{align}
    In particular,
    \begin{align*}
    \E[d_1(\hat{C}_N^{\#},C_\pi)] \le cN^{-1/3}.
    \end{align*}
\end{theorem}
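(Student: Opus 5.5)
The first two inequalities in \eqref{eq:checkerboard_est} are immediate from \eqref{eq: general_inequalities}, and the final claim follows from the last one. Indeed, both $\hat\pi_c^{N,\#}$ and $\bar\pi$ lie in $\Pi(\mathcal{U},\mathcal{U})$ and $C_{\bar\pi}=C_\pi$, so Lemma \ref{lem:KR} gives $d_1(\hat C_N^\#,C_\pi)=\mathcal{KR}(\hat\pi_c^{N,\#},\bar\pi)$. It therefore suffices to show $\E[\mathcal{KR}(\hat\pi_c^{N,\#},\bar\pi)]\le cN^{-1/3}$. The idea is to compare the rank-based checkerboard measure with the continuous adapted empirical measure built from the \emph{unobservable} pseudo-observations $(U_l,V_l):=(F_X(X_l),F_Y(Y_l))$. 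These are i.i.d.\ from $\bar\pi$, and $\bar\pi$ has Lipschitz kernels and uniform second marginal.

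Let $\tilde\pi_c^N$ denote the continuous adapted empirical measure \eqref{eq:def_cont} of $(U_l,V_l)_{l=1}^N$. By Theorem \ref{lem:triangle_est} applied to $\bar\pi\in\mathcal{P}_c([0,1]^2)$, we have $\E[\mathcal{KR}(\tilde\pi_c^N,\bar\pi)]\le cN^{-1/3}$. By the triangle inequality it remains to bound $\E[\mathcal{KR}(\hat\pi_c^{N,\#},\tilde\pi_c^N)]$. Since $F_X,F_Y$ are continuous, almost surely $\rank(X_l)=\rank(U_l)$, so $\rank(X_l)/N=F_{U^N}(U_l)$, where $F_{U^N}$ is the empirical cdf of the $U_l$; the same holds for $V$. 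Hence $\hat\pi_c^{N,\#}$ is exactly the continuous adapted empirical measure of $(F_{U^N}(U_l),F_{V^N}(V_l))$, i.e.\ of the image of the pseudo-sample under the marginal empirical cdfs. By DKW, $\E\sup_u|F_{U^N}(u)-u|\le cN^{-1/2}$, and similarly for $V$.

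The main step, and the main obstacle, is controlling how this perturbation of order $N^{-1/2}$ changes the bin assignments in the $\mathcal{KR}$ distance. The $\mathcal{KR}$ cost is not Lipschitz under moving points between bins, because conditional laws can jump when a point switches $x$-bins. I would proceed as follows. The two measures are both checkerboard measures on the same grid with cell masses $p_{ij}$ and $\hat p_{ij}$. The $x$-marginal of $\tilde\pi_c^N$ is piecewise uniform with column masses $p_i=\sum_j p_{ij}$, while $\hat\pi_c^{N,\#}$ has exact masses $1/n$. The first-coordinate quantile term of $\mathcal{KR}$ is therefore at most $\mathcal{W}$ between these marginals, bounded by $c\sup_u|F_{U^N}(u)-u|+cN^{-1/3}$. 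For the conditional term, within a quantile level $u$ I would couple column $i$ of one measure to column $i$ of the other whenever their quantile ranges overlap (off this set the cost is at most $1$ on a set of measure $\le c\sup|F_{U^N}-\mathrm{id}|$). This leaves
\[
\sum_i \frac1n\,\mathcal{W}\big(\hat\pi_c^{N,\#}(\cdot\mid S^n_i),\tilde\pi_c^N(\cdot\mid S^n_i)\big).
\]
The conditional laws differ only by the points whose $x$-bin or $y$-bin changes under the rank map. A point changes its $x$-bin only if $U_l$ lies within $\delta:=\sup|F_{U^N}-\mathrm{id}|$ of a grid line; there are about $n\cdot N\delta$ such points out of $N/n$ per column. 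A $y$-bin change moves mass by at most $1/n$ in $\mathcal{W}$, and also costs at most $\delta+1/n$ per point. This gives a total contribution of order $n\,\delta\cdot n\cdot(1/n)\cdot$const $+\ \delta+N^{-1/3}$, i.e.\ roughly $n\delta+N^{-1/3}$.

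Since $n\delta\approx N^{1/3}N^{-1/2}=N^{-1/6}$, this naive estimate is too weak. The count of boundary-crossing points should therefore be refined. The number of points moved across a fixed grid line $k/n$ equals $|N F_{U^N}(k/n)-Nk/n|$, which is of order $\sqrt N$, not $n\cdot N\delta$ summed crudely, and the relative change in a column of size $N/n$ is $\sqrt N\,n/N=N^{-1/6}$, times $\mathcal{W}$ displacement at most $1$, weighted by $1/n$ per column, summed over $n$ columns. This still gives $N^{-1/6}$. The fix I would try is to exploit the Lipschitz kernels of $\bar\pi$: moved points come from the adjacent column, whose conditional law differs by only $O(1/n)$ in $\mathcal{W}$. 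Swapping $m$ points between neighbouring columns of size $N/n$ therefore costs about $(mn/N)(1/n+\text{fluctuation})$. The fluctuation of the empirical conditional law of $m\sim\sqrt N$ points is controlled as in \cite{Backhoff-Bartl-Beiglboeck-Wiesel-2022} by $\mathcal{W}$ of the empirical law on $[0,1]$, of order $m^{-1/2}$ times the discrepancy. Altogether this should yield $O(N^{-1/3})$ in expectation, which combined with the first step gives the claim.
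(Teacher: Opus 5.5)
Your reductions are correct. The first two inequalities come from \eqref{eq: general_inequalities}. Since $C_{\bar\pi}=C_\pi$, Lemma \ref{lem:KR} gives $d_1(\hat C_N^\#,C_\pi)=\mathcal{KR}(\hat\pi_c^{N,\#},\bar\pi)$. Theorem \ref{lem:triangle_est} applied to $\bar\pi$ gives $\E[\mathcal{KR}(\tilde\pi_c^N,\bar\pi)]\le cN^{-1/3}$.

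The gap is the estimate everything rests on, $\E[\mathcal{KR}(\hat\pi_c^{N,\#},\tilde\pi_c^N)]\le cN^{-1/3}$, which you do not prove. Your own count shows that charging $O(1)$ to each observation whose $x$-bin changes under the rank map yields only $N^{-1/6}$. The repair in your last paragraph is a heuristic (``should yield''). It does not say how the two column laws, normalised by $N/n$ and by $Np_i$ points respectively, are coupled. Nor does it justify treating the $y$-values of the exchanged observations as independent draws from nearly equal laws.

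The same obstruction sits in a step you present as settled. In $\mathcal{KR}$ the first coordinates are coupled by quantiles. The levels $u$ at which column $i$ of $\hat\pi_c^{N,\#}$ is paired with a different column of $\tilde\pi_c^N$ form a set of measure essentially $\sum_{i=1}^{n-1}|F_{U^N}(i/n)-i/n|$. Its expectation is of order $nN^{-1/2}=N^{-1/6}$, not $c\sup_u|F_{U^N}(u)-u|$. Bounding the cost there by $1$ therefore again gives only $N^{-1/6}$.

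The missing idea, which is how the paper argues, is to compare the rank-based measure with $\bar\pi$ directly and never count bin crossings. First discretise: $\mathcal{KR}(\hat\pi_c^{N,\#},\bar\pi)\le\mathcal{KR}(\hat\pi^{N,\#},\bar\pi)+cN^{-1/3}$, as in Theorem \ref{lem:triangle_est}. Because ranks are used, the column $S_i^n\times[0,1]$ of $\hat\pi^{N,\#}$ contains exactly the $m:=N/n=N^{2/3}$ observations with $X$-ranks $(i-1)m+1,\dots,im$. Both first marginals therefore give mass $1/n$ to every column, so the quantile coupling pairs $u\in S_i^n$ with column $i$ and there is no mismatch set.

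For $u\in S_i^n$, bound $\mathcal{W}\big((\hat\pi^{N,\#})_u,\bar\pi_u\big)$ by the triangle inequality through $\frac1m\sum_j\bar\pi_{\bar U_j}$ and $\frac1m\sum_j\delta_{\varphi^N(\bar V_j)}$. The sums run over that rank block, and $(\bar U_j,\bar V_j)=(F_X(X_j),F_Y(Y_j))$ are your $(U_j,V_j)$.
\begin{itemize}
\item Lipschitz kernels and convexity of $\mathcal{W}$ bound the first piece by $\frac cm\sum_j|u-\bar U_j|$. This integrates over $u$ to at most $c/n+c\,\mathcal{W}\big(\mathcal{U},\frac1N\sum_{l=1}^N\delta_{\bar U_l}\big)$. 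So misassigning observations to columns costs only a quantile deviation of order $N^{-1/2}$, not $O(1)$ per observation.
\item Conditionally on the $X$-sample, the $\bar V_j$ are independent with laws $\bar\pi_{\bar U_j}$. Hence the second piece has conditional expectation at most $cm^{-1/2}+1/n$.
\item The third piece is at most $c/n+\sup_y|F_{Y^N}(y)-F_Y(y)|$, with $F_{Y^N}$ the empirical cdf of the $Y_l$, and is handled by DKW.
\end{itemize}
Summing gives $cN^{-1/3}$ without the intermediate measure $\tilde\pi_c^N$. Your detour through $\tilde\pi_c^N$ could perhaps be completed with these same two tools, but they already give the bound against $\bar\pi$ directly.
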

%
%We remark that, contrary to Theorem \ref{thm_rates_C_N}, these rates are for $\hat{C}_N^{\#}$, which is not necessarily SI in general.
%
%
\begin{remark}
\begin{enumerate}[label = (\alph*)]\label{rem_ACE_CCE}
    \item Both the adapted copula estimator $\hat{C}_N$ and the checkerboard estimator 
$\hat{C}_N^\#$ are copulas by construction. Thus, they can be used as 
plug-in estimators for dependence measures, as we will see in  
Sections~\ref{sec_rank_based_DM} and~\ref{sec_RDM} below. Due to 
Theorem~\ref{thm_conv_C_N} and Lemma~\ref{lem_conv_C_N_check}, $\hat{C}_N$ and 
$\hat{C}_N^\#$ are strongly consistent estimators of $C_\pi$. Further, they 
exhibit the same rates of convergence; see Theorems~\ref{thm_rates_C_N} 
and~\ref{thm:rates}. Notably, both estimators can be computed in quasi-linear time, i.e. in $O(N \log N)$ operations, 
allowing them to be evaluated in a fraction of a second for a sample 
size $N$ larger than a million.
    \item  Both $\hat{C}_N$ and $\hat{C}_N^\#$ are based on binning methods. For the adapted copula estimator, 
all $N$ observations are first binned into the subcubes of $[0,1]^2$, then mass is uniformly distributed within each subcube, and the 
resulting measure is subsequently transformed to \(\cU\)-marginals; see Algorithm \ref{alg:ACE}. This 
approach offers the advantage of requiring only a single parameter that 
determines the edge length of the subcube; typically, the edge length is 
chosen as $n = (\lfloor N^{1/3} \rfloor)^{-1}$; see \cite[Definition 1.2]{Backhoff-Bartl-Beiglboeck-Wiesel-2022}. In contrast, the checkerboard 
estimator first transforms the data to uniform marginals by considering standardized rank-transformed observations. These transformed observations 
are then distributed via binning onto an $n \times n$ checkerboard, where the 
mass within each subcube is uniformly distributed. To ensure that the 
resulting distribution has uniform marginals on $[0,1]$---and thus constitutes 
a valid copula---we have assumed that $n = N^{1/3}$ is an integer, so that 
every column and row contains exactly $n^2$ observations; see e.g.~Figure \ref{fig_comparison_AEC_checkCop}. In the general case, a good performance of the checkerboard estimator requires 
additional bandwidth parameters to partition $[0,1]^2$ into suitable rectangular 
bins; see \cite[Section~4]{strothmann2022}.
\item \label{rem_ACE_CCE2} While $\hat{C}_N$ and $\hat{C}_N^\#$ exhibit similar properties, an advantage of the adapted copula estimator lies in the fact that no additional bandwidth parameters are necessary.
An advantage of the checkerboard estimator is that it can directly be applied to samples drawn from general continuous distribution functions. In contrast, the adapted copula estimator works for any marginal transformation to a distribution with domain in \([0,1]^2\), e.g. for \((f,g)_\#\pi\) with \(f,g\colon \R\to (0,1)\) continuous and strictly increasing. 
\item The adapted copula estimator extends to more general settings as follows:
\begin{itemize}
    \item Our consistency results and rates for $\hat{\pi}^N$ and $\hat{\pi}_c^N$ hold for general $\pi \in \mathcal{P}_c(A\times B)$, where $A,B\subseteq \R$ are bounded sets and the sample complexity scales with $\mathrm{diam}(A)\vee \mathrm{diam}(B)$.
    \item One can also consider extensions to multivariate distributions $\pi \in \mathcal{P}([0,1]^k \times [0,1]^l).$ In this case, we still expect consistency to hold, while our rates are conjectured to be of order $N^{-1/(k+l)}$ for $k,l\ge 2$; see \cite{Backhoff-Bartl-Beiglboeck-Wiesel-2022} for the case $k=l=d.$
\end{itemize}
\end{enumerate}
\end{remark}

\begin{proof}[Proof of Theorem \ref{thm:rates}]
Recall that \(\{(X_l, Y_l)\}_{l=1}^N\) are i.i.d.~and let $(X^N, Y^N)\sim 1/N\sum \delta_{(X_l,Y_l)}.$ We first recall the Dvoretzky–Kiefer–Wolfowitz inequality, stating that
    \begin{align}\label{eq:DKW}
        \E\Big[\sup_{y\in \R} |F_{ Y^N}(y)-F_Y(y)|\Big]\le c N^{-1/2}.
    \end{align}
    To simplify notation, let us write
    $$ (U_l,V_l):=(F_{X^N}(X_l), F_{Y^N}(Y_l)), \qquad l=1, \dots, N.$$
    Using the same argument as in the proof of Theorem \ref{lem:triangle_est} with the samples $(X_l,Y_l)_{l=1}^N$ replaced by $(U_l,V_l)_{l=1}^N$, we have
    \begin{align*}
        \mathcal{KR}(\hat{\pi}^{N,\#}_c,\bar \pi)\le \mathcal{KR}(\hat{\pi}^{N, \#},\bar \pi) + cN^{-1/3},
    \end{align*}
    where $\hat{\pi}^{N, \#}$ denotes the (discontinuous) adapted empirical measure constructed from $(U_l,V_l)_{l=1}^N.$
    It is thus sufficient to estimate $\mathcal{KR}(\hat{\pi}^{N, \#},\bar\pi).$
    For this, note that the sample $(F_X(X_l),F_Y(Y_l))_{l=1}^N$ is different from $(U_l,V_l)_{l=1}^N.$ In particular, $(U_l,V_l)_{l=1}^N$ are not i.i.d.~(e.g.~as the sum $\sum U_l=(N+1)/2$ is almost surely deterministic). The proof will proceed by carefully arguing that $(U_l,V_l)_{l=1}^N$ can be replaced by 
    $$(\bar U_l, \bar V_l) :=(F_X(X_l),F_Y(Y_l))\qquad l=1, \dots, N$$ with negligible error. 
    For this, let us first note that $(\bar U_l, \bar V_l)_{l=1}^N$ are i.i.d.~with distribution $\overline{\pi}$. For notational simplicity, we now assume that $X_1 \le \dots \le X_N$. We then note that, by definition, exactly the $m:=N/n = N^{2/3}$ observations $(U_{(i-1)m+1}, \dots, U_{im})$ fall into each $S_i^n = ((i-1)/n, i/n]$, $i=1,\dots, n$, and thus, using the disintegration theorem,
    \begin{align*}
        \hat{\pi}^{N,\#} = \int_0^1 (\hat{\pi}^{N,\#})_{u}\, \hat{\pi}^{N,\#}_1(\de u)= \frac{1}{n} \sum_{i=1}^n \delta_{\varphi^N(S_i^n)} \otimes (\hat{\pi}^{N,\#})_{u},
    \end{align*}
    where for $u\in S_i^n$
    \begin{align*}
        (\hat{\pi}^{N,\#})_{u}:= \frac{1}{m} \sum_{j=(i-1)m+1}^{im} \delta_{\varphi^N(V_j)}.
    \end{align*}
    Next we compute
    \begin{align*}
        \mathcal{KR}(\hat{\pi}^{N, \#}, \bar \pi) = \int_0^1 |\varphi^N(u)-u|\,du + \sum_{i=1}^n \int_{S_i^n} \mathcal{W}( (\hat{\pi}^{N, \#})_u,\bar \pi_u)\,\de u.
    \end{align*}
    It is thus sufficient to bound $\mathcal{W}( (\hat{\pi}^{N, \#})_u,\bar \pi_u).$ For this we use the triangle inequality to obtain 
    \begin{align*}
        \mathcal{W}((\hat{\pi}^{N, \#})_u, \bar\pi_u) &\le  \mathcal{W}\Bigg(\bar \pi_u, \frac{1}{m} \sum_{j=(i-1)m+1}^{im}  \bar\pi_{\bar U_j}\Bigg) + \mathcal{W} \Bigg( \frac{1}{m} \sum_{j=(i-1)m+1}^{im} \bar\pi_{\bar U_j}, \frac{1}{m} \sum_{j=(i-1)m+1}^{im} \delta_{\varphi^N(\bar V_j)} \Bigg)\\
        &\quad +\mathcal{W}\Bigg( \frac{1}{m} \sum_{j=(i-1)m+1}^{im} \delta_{\varphi^N(\bar V_j)}, \frac{1}{m} \sum_{j=(i-1)m+1}^{im} \delta_{\varphi^N(V_j)} \Bigg).
    \end{align*}
    We now estimate each term separately. For this, 
    by convexity of $\mathcal{W}$, as $\bar\pi$ has Lipschitz kernels, we have
    \begin{align*}
        \mathcal{W}\Bigg(\bar \pi_u, \frac{1}{m} \sum_{j=(i-1)m+1}^{im} \bar\pi_{\bar U_j}\Bigg)
        &\le  \frac{c}{m} \sum_{j=(i-1)m+1}^{im} |u-\bar U_j| \\
        &\le \frac{c}{n}+ c\,n \int_{(i-1)/n}^{i/n} |v- F_{\bar U^N}^{-1}(v)|\,\de v, 
    \end{align*}
    where $\bar U^N\sim 1/N \sum_j \delta_{\bar U_j}$, so that 
    \begin{align*}
        \sum_{i=1}^n \int_{S_i^n} \mathcal{W}\Bigg(\bar\pi_u, \frac{1}{m} \sum_{j=(i-1)m+1}^{im} \bar\pi_{\bar U_j}\Bigg) \,\de u\le \frac{c}{n} + c\,\mathcal{W}(\mathcal{U}, F_X{}_{\#}\pi^N_1).
    \end{align*}
    Next we note that, given $(\bar U_{(i-1)m+1} , \dots, \bar U_{im})$, the random variables $(\bar V_{(i-1)m+1}, \dots, \bar V_{im})$ are independent with laws $(\bar \pi_{\bar U_{(i-1)m+1}}, \dots, \bar \pi_{\bar U_{im}})$. Using the variance bound 
    $\E[\cW(\mu, \nu)]\le \int_0^1 \sqrt{\text{Var}(F_\mu(t)-F_{\nu}(t))}\,\de t$ we thus find
    \begin{align*}
        \E\Bigg[\mathcal{W} \Bigg( \frac{1}{m} \sum_{j=(i-1)m+1}^{im} \bar\pi_{\bar U_j}, \frac{1}{m} \sum_{j=(i-1)m+1}^{im} \delta_{\varphi^N(\bar V_j)} \Bigg)\Bigg|\bar U_{(i-1)m+1} , \dots, \bar U_{im}\Bigg] \le  \frac{c}{\sqrt{m}} +\frac{1}{n} \le cN^{-1/3}.
    \end{align*}
    Lastly, 
    \begin{align*}
        \mathcal{W}\Bigg( \frac{1}{m} \sum_{j=(i-1)m+1}^{im} \delta_{\varphi^N(\bar V_j)}, \frac{1}{m} \sum_{j=(i-1)m+1}^{im} \delta_{\varphi^N(V_j)} \Bigg) 
        &\le  \frac{1}{m} \sum_{j=(i-1)m+1}^{im} |\varphi^N(\bar V_j)-\bar V_j|+|\bar V_j-V_j|+|V_j-\varphi^N(V_j)|\\
        &\stackrel{\eqref{subcube_bound}}{\le} \frac{c}{n} + \sup_{y\in \R}|F_{Y^N}(y)-F_Y(y)| + \frac c n.
    \end{align*}
    Combining all the above estimates with \eqref{eq:DKW}, we obtain
    \begin{align*}
        \E[\mathcal{KR}(\hat{\pi}^{N, \#}, \bar\pi)]\le cN^{-1/3}+cN^{-1/2}\le cN^{-1/3}
    \end{align*}
    as claimed. Recalling that $\hat{\pi}^{N, \#}_c, \bar \pi\in \Pi(\mathcal{U}, \mathcal{U})$ and using Lemma \ref{lem:KR}, the second claim follows.
\end{proof}

While similar in spirit, the estimators $\hat{C}_N$ and $\hat C^{\#}_N$ are clearly different.
Our adapted empirical copula estimator \(\hat{C}_N\) places the observations directly into the cubes $S_{ij}^n$, whereas the checkerboard estimator \(\hat{C}^\#_N\) distributes rank-transformed observations over the cubes; see the following example.

\begin{figure}[h!]
    \centering
    \begin{align*}
    (\hat{\pi}^N_c(S_{ij}^n))_{i,j=1}^n &= \frac{1}{n^3}
    \begin{pmatrix}
        n^3 & 0 & \cdots & 0\\
        0 & 0 & \cdots & 0 \\
        \vdots & \vdots & \ddots & \vdots \\
        0 & 0 & \cdots & 0
    \end{pmatrix}
    \quad &&\text{and}& \quad
    (\hat{\pi}^{N,\#}_c(S_{ij}^n))_{i,j=1}^n &= \frac{1}{n^3}
    \begin{pmatrix}
        n^2 & 0 & \cdots & 0 \\
        0 & n^2 & \cdots & 0 \\
        \vdots & \vdots & \ddots & \vdots \\
        0 & 0 & \cdots & n^2
    \end{pmatrix}\\
    A_n &= \frac{1}{n^2}
    \begin{pmatrix}
        n & n & \cdots & n\\
        n & n & \cdots & n \\
        \vdots & \vdots & \ddots & \vdots \\
        n & n & \cdots & n
    \end{pmatrix}
    \quad &&\text{and} &\hspace{1.6cm} 
    A_n^{\#} &= \frac{1}{n^2}
    \begin{pmatrix}
        n^2 & 0 & \cdots & 0 \\
        0 & n^2 & \cdots & 0 \\
        \vdots & \vdots & \ddots & \vdots \\
        0 & 0 & \cdots & n^2
    \end{pmatrix}
    \end{align*}
    \caption{Representation of estimated distributions (top row) and their associated checkerboard copulas (bottom row) for Example \ref{ex:checkerboard}. The left column corresponds to the adapted empirical copula estimator, the right column to the checkerboard estimator. By convention, each row/column of the doubly stochastic matrices \(A_n\) and \(A_n^\#\) sums up to \(1\), whereas the densities of \(\hat{\pi}_c^N\) and \(\hat{\pi}^{N,\#}_c\) integrate to \(1\).}
    \label{fig_comparison_AEC_checkCop}
\end{figure}

\begin{example}[\(\hat{C}_N \ne \hat{C}_N^\#\)]\label{ex:checkerboard}
For \(\pi\in \Pi(\cU,\cU)\), assume that $X_1, \dots, X_N \in (0,1/n]$ and $Y_1, \dots, Y_N\in (0,1/n]$ as well as \(X_1 < X_2 < \cdots < X_N\) and \(Y_1 < Y_2 < \cdots < Y_N\).  Since all observations fall into the subcube \(S^n_{11} = (0,1/n]\times (0,1/n]\), we have $\hat{\pi}^N_c=\mathcal{U}^{(0,1/n]\times(0,1/n]}.$ Hence, the adapted empirical copula estimator is the independence copula, i.e. 
\begin{align}\label{ex:comp_check_adap_est1}
    \hat{C}_N(u,v) = uv = C^\perp(u,v).
\end{align}
In contrast, the checkerboard estimator $\hat{C}^\#_N$ is constructed via the ranks of the observations. Since \(\text{rank}(X_i) = i = \text{rank}(Y_i)\) for all \(i\in \{1,\ldots,N\}\), the doubly stochastic matrix \(A_n^\#\) constructed via \eqref{eq:checkerboard_aij} is diagonal with \(a_{ij} = 1\) if \(i=j\) and \(a_{ij} = 0\) otherwise; see Figure \ref{fig_comparison_AEC_checkCop}.
Then, a straightforward calculation shows that the associated checkerboard copula defined by \eqref{def:check_cop} is given as
\begin{align}\label{ex:comp_check_adap_est2} 
\begin{split}
    \hat{C}^\#_N(u,v) 
    %&= \begin{cases}
     %   \frac{k-1}{n} + (\frac{k}{n}-\frac{k-1}{n})\, C^\perp\left(\frac{u-\frac{k-1}{n}}{\frac{k}{n}-\frac{k-1}{n}},\frac{v-\frac{k-1}{n}}{\frac{k}{n}-\frac{k-1}{n}}\right), &\text{if } (u,v)\in S_{k,k},\\
    %    \min\{u,v\}, &\text{else}.
    %\end{cases} \\
    &= \begin{cases}
        \frac{k-1}{n} + n (u-\frac{k-1}{n})(v-\frac{k-1}{n}), &\text{if } (u,v)\in S_{k,k}^n,\\
        \min\{u,v\}, &\text{else}.
    \end{cases}
\end{split}
\end{align}
Obviously, for \(N\geq 8\) (i.e.~\(n = N^{1/3} > 1\)), the estimators in \eqref{ex:comp_check_adap_est1} and \eqref{ex:comp_check_adap_est2} differ. Note that \(\hat{C}_N\) and \(\hat{C}_N^\#\) are both SI in this example and thus \(\hat{C}_N = (\hat{C}_N)^\uparrow\) as well as \(\hat{C}_N^\# = (\hat{C}_N^\#)^\uparrow\).
%one  since \(\hat{A}_n\) is a diagonal matrix, the checkerboard estimator is given by a so-called ordinal sum of the independence copula \(C^\perp\) with respect to the intervals \(((\tfrac{k-1}{n},\tfrac{k}{n}])_{1\leq k \leq n}\) (see e.g. \cite[Definition 3.1.8]{fdsempi2016}), that is, 
\end{example}

\section{Rank-based dependence measures}\label{sec_rank_based_DM}

Certainly the most prominent dependence measure that satisfies Axioms \ref{axiom1}--\ref{axiom3} is the population version of \emph{Chatterjee's rank correlation} \(\xi\). It is defined for $\pi \in \mathcal{P}(\R^k \times \R)$ and \((X,Y) \sim \pi\) by
\begin{align}\label{def_xi}
\begin{split}
    \xi(\pi) := &\frac{\int |\P(Y\geq y\mid X = x) - \P(Y\geq y) |^2 \, \pi_2(\de y) \pi_1(\de x)}{\int  |\mathbf{1}_{\{z\geq y\}} - \P(Y\geq y) |^2 \, \pi_2(\de z) \pi_2(\de y) }; \\
    %= & \frac{\int \int \left(\bar{F}_{\pi_x} (F_{\nu}^{-1}(v)) - \bar{F}_{\nu}(F_{\nu}(v)\right)^2 \de v \de \mu(x)}{\int_{\R} \Var(\1_{\{Y \geq y\}}) \de \nu(y)} =: \xi(\pi);
\end{split}
    \end{align}
see \cite{chatterjee2021,gamboa2020} and the bivariate versions in \cite{chatterjee2020,siburg2013}. 
%Here, \(\bar{F}_{\pi_x}(y)=P(Y\geq y\mid X=x)\) and \(\bar{F}_\nu(y) = P(Y\geq y)\) denote the survival function of \(\pi_x\) and \(\nu\), respectively.
The quantity \(\xi\) has attracted a lot of attention in recent years. It admits a simple and elegant nearest neighbor-based estimator with asymptotic theory \cite{dette2023boot,han2022limit,lin2023boosting,shi2022power,shi2024azadkia} and it has important applications such as model-free feed forward feature selection and variable selection; see \cite{ansari2023MFOCI,chatterjee2021,deb2020b}. 

In this section, we consider dependence measures that are constructed via conditional survival functions, extending Chatterjee's \(\xi\) in \eqref{def_xi}. As above, we consider laws $\pi\in\mathcal{P}(\R^{k}\times \R)$, i.e.~if $(X,Y)\sim \pi$, then $Y$ is a one-dimensional random variable.
For a function \(\varphi:\R\to \R\), that is convex and strictly convex at \(0\) with \(\varphi(0) = 0\), we consider the functional
\begin{align}\label{def:T_phi}
\begin{split}
\xi_\varphi(\pi) :=   &\frac{\int  \varphi\big( \P(Y\geq y \mid X=x) - \P(Y\geq y)\big) \, \pi_2(\de y)\,\pi_1(\de x)}{\int \varphi\big( \mathbf{1}_{\{z\ge y\}}- \P(Y\geq y)\big) \,\pi_2(\de z)\,\pi_2(\de y)} \\
=&\frac{\int \int_0^1 \varphi\big( \bar{F}_{Y|X=x}(F_Y^{-1}(u))-\bar{F}_Y(F_Y^{-1}(u))\big) \,\de u\,\pi_1(\de x)}{\int \varphi\big( \mathbf{1}_{\{z\ge y\}}- \bar{F}_Y(y)\big) \,\pi_2(\de z)\,\pi_2(\de y)}.
\end{split}
\end{align}
Here, \(z\mapsto \bar{F}_Z(z) := P(Z\geq z)\) denotes the survival function of a random variable \(Z\). Obviously, \(\bar{F}(z) = 1- F_Z(z) + P(Z=z)\) and, if \(F_Z\) is continuous then \(\bar{F}_Z(z) = 1 - F_Z(z)\) for all \(z\).
Since \(\pi\) is assumed to be non-degenerate, the denominator of \(\xi_\varphi(\pi)\) is positive so that $\xi_\varphi(\pi)$ is well-defined.
The functional $\xi_\varphi$ is a dependence measure that satisfies Axioms \ref{axiom1}--\ref{axiom3}, see \cite[Theorem 4.1]{Ansari-Fuchs-2026} for a variant based on conditional distribution functions.  For \(\varphi(x) = x^2\),
\(\xi_\varphi\) coincides with Chatterjee's rank correlation \(\xi\) in \eqref{def_xi}.
We also remark that, for continuous \(y\mapsto F_Y(y)\), the denominator of \(\xi_\varphi(\pi)\) reduces to
\begin{align}\nonumber
    \int \varphi\left(\1_{\{z\geq y\}} - \bar{F}_Y(y)\right) \, \pi_2(\de z) \, \pi_2(\de y) &= \int \varphi\big(1 - \bar{F}_Y(y)\big) \bar{F}_Y(y) + \varphi\big(0 - \bar{F}_Y(y)\big) F_Y(y) \, \pi_2(\de y) \\
    \label{eq_denom_xi_phi}&= \int_0^1 \varphi(u)\, (1-u) + \varphi(u-1)\,u \, \de u.
\end{align}
Recalling that $\varphi$ is convex, its restriction $\varphi:[-1,1]\to \R$ is $L$-Lipschitz. We will make use of this fact throughout this section. 

The following result is an application of the triangle inequality and shows that $\xi_\varphi$ is naturally controlled by $\mathcal{CW}.$ For notational simplicity we define 
\begin{align*}
    D(\pi, \tilde\pi)&:= \int \int_0^1 \varphi\big(\bar{F}_{Y|X=x}(F_Y^{-1}(u))-\bar{F}_Y(F_Y^{-1}(u))\big)\,\de u\,\pi_1(\de x) \\
    &\quad -\int \int_0^1 \varphi\big({\bar{F}}_{\tilde Y|\tilde X=x}(F_{\tilde Y}^{-1}(u))-\bar{F}_{\tilde Y}(F_{\tilde Y}^{-1}(u))\big)\,\de u\,\tilde\pi_1(\de x)
\end{align*}
for $\pi, \tilde\pi\in \mathcal{P}(\R^k\times \R)$ and $(X,Y)\sim \pi, (\tilde X, \tilde Y)\sim \tilde \pi$.

\begin{lemma}\label{lem:estimate}
We have
\begin{align*}
| D(\pi, \tilde\pi)| &\le L \big[\mathcal{CW} \big((x,F_Y)_{\#}\pi, (x,F_{\widetilde Y})_{\#}\tilde\pi\big) + \mathcal{W}\big(F_Y{}_\#\pi_2,F_{\widetilde Y}{}_\#\tilde\pi_2\big)\big]\\
&\le 2L \cdot \mathcal{CW} \big((x,F_Y)_{\#}\pi, (x,F_{\widetilde Y})_{\#}\tilde\pi \big).
\end{align*}
\end{lemma}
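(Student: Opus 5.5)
The plan is to rewrite both integrands in $D(\pi,\tilde\pi)$ in terms of the rank-transformed couplings $\rho:=(x,F_Y)_{\#}\pi$ and $\tilde\rho:=(x,F_{\widetilde Y})_{\#}\tilde\pi$. Then we bound the difference by a coupling argument in the first coordinate, and use that $\varphi$ is $L$-Lipschitz on $[-1,1]$.

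\textbf{Step 1: rewriting the integrands.} Write $(X,V):=(X,F_Y(Y))\sim\rho$, and note that $\rho_1=\pi_1$. For $u\in(0,1)$ set $y=F_Y^{-1}(u)$. Then $\{Y\ge y\}$ and $\{V\ge F_Y(y)\}$ agree up to null sets. This uses $\{Y\ge F_Y^{-1}(u)\}=\{F_Y(Y)\ge u\}$ a.s. for continuous $F_Y$; for general $F_Y$ one may need to be careful, and I would work with the convention making $F_Y(F_Y^{-1}(u))=u$ or argue via Lebesgue-a.e.\ $u$. It follows that
\[
\bar F_{Y|X=x}(F_Y^{-1}(u))=\rho_x([u,1]),\qquad \bar F_Y(F_Y^{-1}(u))=\rho_2([u,1])
\]
for a.e.\ $u$. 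Hence
\[
\int\int_0^1\varphi\big(\bar F_{Y|X=x}(F_Y^{-1}(u))-\bar F_Y(F_Y^{-1}(u))\big)\,\de u\,\pi_1(\de x)=\int\int_0^1\varphi\big(\rho_x([u,1])-\rho_2([u,1])\big)\,\de u\,\rho_1(\de x),
\]
and the analogous identity holds for $\tilde\pi$ and $\tilde\rho$.

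\textbf{Step 2: coupling the first coordinates.} Take any $\gamma\in\Pi(\rho_1,\tilde\rho_1)$. Integrating the first term against $\gamma(\de x,\de\tilde x)$ and the second term against the same $\gamma$ (in the variable $\tilde x$) expresses $D$ as a single $\gamma$-integral of a difference. All arguments of $\varphi$ lie in $[-1,1]$, so the Lipschitz bound gives
\[
|D(\pi,\tilde\pi)|\le L\int\int_0^1\big|\rho_x([u,1])-\tilde\rho_{\tilde x}([u,1])\big|+\big|\rho_2([u,1])-\tilde\rho_2([u,1])\big|\,\de u\,\gamma(\de x,\de\tilde x).
\]
Both measures live on $[0,1]$, and for a.e.\ $u$ the survival values $\mu([u,1])$ equal $1-F_\mu(u^-)$. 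So by Lemma \ref{lem:L1_integral} the inner integrals equal $\mathcal W(\rho_x,\tilde\rho_{\tilde x})$ and $\mathcal W(\rho_2,\tilde\rho_2)$ respectively. Taking the infimum over $\gamma$ yields the first inequality, since $\rho_2=F_Y{}_\#\pi_2$.

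\textbf{Step 3: the second inequality.} This follows immediately from Lemma \ref{lem:second} applied to $\rho,\tilde\rho$, which gives $\mathcal W(\rho_2,\tilde\rho_2)\le\mathcal{CW}(\rho,\tilde\rho)$.

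\textbf{Main obstacle.} The main obstacle is Step 1's identification of survival functions at quantiles with those of the rank-transformed kernel when $F_Y$ has atoms, together with choosing versions of the kernels $\rho_x$ consistently. I would handle this by noting that only Lebesgue-a.e.\ $u$ matter, and that the disintegration of $\rho$ is $(F_Y)_\#\pi_x$.
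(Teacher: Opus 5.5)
Your proposal is correct and follows essentially the same route as the paper: for each pair $(x,\tilde x)$, bound the difference using that $\varphi$ is $L$-Lipschitz on $[-1,1]$ together with the triangle inequality. Identify the resulting $L^1$ distances of survival functions with $\mathcal{W}((F_Y)_{\#}\pi_x,(F_{\widetilde Y})_{\#}\tilde\pi_{\tilde x})$ and $\mathcal{W}((F_Y)_{\#}\pi_2,(F_{\widetilde Y})_{\#}\tilde\pi_2)$, integrate against an arbitrary $\gamma\in\Pi(\pi_1,\tilde\pi_1)$, take the infimum, and finish with Lemma \ref{lem:second}.

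The obstacle you flag about atoms is not real. The Galois relation $F_Y(y)\ge u \iff y\ge F_Y^{-1}(u)$ holds for every $u\in(0,1)$ and every distribution function, so $\{Y\ge F_Y^{-1}(u)\}=\{F_Y(Y)\ge u\}$ exactly; this is what the paper uses. In particular you should not invoke a convention like $F_Y(F_Y^{-1}(u))=u$, which is false when $F_Y$ has atoms.
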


\begin{proof}
We first note that for any $x,\tilde x\in \R^k,$
\begin{align*}
&\int_{0}^1 \varphi\big(\bar{F}_{Y|X=x}(F_Y^{-1}(u))-\bar{F}_Y(F_Y^{-1}(u))\big)\,\de u-\int_{0}^1 \varphi\big(\bar{F}_{\widetilde Y|\widetilde X=\tilde x}(F_{\widetilde Y}^{-1}(u))-\bar{F}_{\widetilde Y}(F_{\widetilde Y}^{-1}(u))\big)\,\de u \\
&\le L \int_{0}^1 \Big|\bar{F}_{Y|X=x}(F_Y^{-1}(u)) - \bar{F}_Y(F_Y^{-1}(u)) + \bar{F}_{\widetilde Y}(F_{\widetilde Y}^{-1}(u)) - \bar{F}_{\widetilde Y|\widetilde X=\tilde x}(F_{\widetilde Y}^{-1}(u))\Big| \,\de u \\
&\le  L \big[\mathcal{W} ({F_{Y}}_{\#} \pi_{x}, F{_{\widetilde Y}}_{\#} \tilde\pi_{\tilde x}) + \cW(F_Y{}_\#\pi_2,F_{\widetilde Y}{}_\#\tilde\pi_2)\big],
\end{align*}
where we use for the last equality that \(F(y)\geq u\) if and only if \(y\geq F^{-1}(u)\) for all \(u\in (0,1)\), \(y\in \R\), and for any distribution function \(F\).
Thus, for any $\gamma\in \Pi(\pi_1, \tilde\pi_1)$ we have 
\begin{align*}
|D(\pi, \tilde\pi)| \le L \bigg[\int \mathcal{W} ({F_{Y}}_{\#} \pi_x, F{_{\widetilde Y}}_{\#} \tilde\pi_{\tilde x})\,\gamma(\de x, \de \tilde x) + \cW(F_Y{}_\#\pi_2,F_{\widetilde Y}{}_\#\tilde\pi_2)\bigg].
\end{align*}
Taking the infimum over $\gamma \in \Pi(\pi_1, \tilde\pi_1)$ and recalling Lemma \ref{lem:second} concludes the proof.
\end{proof}

A similar result can be obtained for \emph{measures of sensitivity} \cite{Ansari-LFT-2023} of the form
\begin{align}
    \Lambda_{\varphi}(\pi) = \frac{\int \int_{0}^1 \varphi\big(\bar{F}_{Y|X=x}(F_Y^{-1}(u))-\bar{F}_{Y|X=x'}(F_Y^{-1}(u))\big)\,\de u\,\pi_1(\de x)\, \pi_1(\de x')}{\int  \varphi\big( \mathbf{1}_{\{z\ge y\}}-\mathbf{1}_{\{z'\ge y\}}\big)\,\pi_2(\de y)\,\pi_2(\de z)\,\pi_2(\de z')},
\end{align}
where \(\Lambda_\varphi\) satisfies Axioms \ref{axiom1}--\ref{axiom3} whenever \(\varphi\colon \R\to \R\) is convex and strictly convex at \(0\) with \(\varphi(0) = 0\);  see \cite{Ansari-Fuchs-2026} for a variant based on conditional distribution functions. 
In fact, following the same arguments as in the proof of Lemma \ref{lem:estimate}, we have the following: 

\begin{corollary}\label{lem:estimate2}
For $\pi, \tilde\pi\in \mathcal{P}(\R^k\times \R)$ and $(X,Y)\sim \pi, (\tilde X, \tilde Y)\sim \tilde \pi$ we have
\begin{align*}
&\bigg|\int  \varphi\big(\bar{F}_{Y|X=x}(F_Y^{-1}(u))-\bar{F}_{Y|X=x'}(F_Y^{-1}(u))\big)\,\de u\,\pi_1(\de x)\, \pi_1(\de x') \\
&\qquad-\int  \varphi\big(\bar{F}_{\widetilde Y|\widetilde X=\tilde x}(F_{\widetilde Y}^{-1}(u))-\bar{F}_{\widetilde Y|\widetilde X=\tilde x'}(F_{\widetilde Y}^{-1}(u))\big)\,\de u\,\tilde\pi_1(\de \tilde x)\, \tilde\pi_1(\de \tilde{x}') \bigg| \\
&\qquad\qquad\le 2L\cdot \mathcal{CW} \big((x,F_Y)_{\#}\pi, (x,F_{\widetilde Y}) _{\#}\tilde\pi \big).
\end{align*}
\end{corollary}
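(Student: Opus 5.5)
The plan is to follow the proof of Lemma \ref{lem:estimate}, with a coupling of pairs replacing the coupling of single points. For fixed $x,x'$ set
\begin{align*}
g(x,x') := \int_0^1 \varphi\big(\bar{F}_{Y|X=x}(F_Y^{-1}(u))-\bar{F}_{Y|X=x'}(F_Y^{-1}(u))\big)\,\de u,
\end{align*}
and define $\tilde g(\tilde x,\tilde x')$ analogously for $\tilde\pi$. The arguments of $\varphi$ lie in $[-1,1]$, where $\varphi$ is $L$-Lipschitz. Hence
\begin{align*}
|g(x,x')-\tilde g(\tilde x,\tilde x')| \le L\int_0^1 \big|\bar{F}_{Y|X=x}(F_Y^{-1}(u))-\bar{F}_{\widetilde Y|\widetilde X=\tilde x}(F_{\widetilde Y}^{-1}(u))\big|\,\de u + L\int_0^1 \big|\bar{F}_{Y|X=x'}(F_Y^{-1}(u))-\bar{F}_{\widetilde Y|\widetilde X=\tilde x'}(F_{\widetilde Y}^{-1}(u))\big|\,\de u .
\end{align*}
This uses the triangle inequality, splitting the difference of differences into a term in $(x,\tilde x)$ and a term in $(x',\tilde x')$.

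Each of these two integrals is then bounded by a Wasserstein distance, exactly as in the proof of Lemma \ref{lem:estimate}. By the equivalence $F(y)\ge u \Leftrightarrow y\ge F^{-1}(u)$, the function $u\mapsto \bar F_{Y|X=x}(F_Y^{-1}(u))$ equals $1$ minus the (left-limit) cdf of ${F_Y}_\#\pi_x$ at $u$ for a.e.\ $u$. The $L^1$-representation of $\mathcal{W}$ on $[0,1]$ (Lemma \ref{lem:L1_integral}) then gives
\begin{align*}
|g(x,x')-\tilde g(\tilde x,\tilde x')|\le L\big[\mathcal{W}({F_Y}_\#\pi_x,{F_{\widetilde Y}}_\#\tilde\pi_{\tilde x})+\mathcal{W}({F_Y}_\#\pi_{x'},{F_{\widetilde Y}}_\#\tilde\pi_{\tilde x'})\big].
\end{align*}
Note that $({F_Y}_\#\pi_x)$ is a version of the kernel of $(x,F_Y)_\#\pi$, and similarly for $\tilde\pi$.

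Next, take any $\gamma\in\Pi(\pi_1,\tilde\pi_1)$ and integrate against the product coupling $\gamma\otimes\gamma$ of $(x,\tilde x)$ and $(x',\tilde x')$. Its marginal on $(x,x')$ is $\pi_1\otimes\pi_1$ and its marginal on $(\tilde x,\tilde x')$ is $\tilde\pi_1\otimes\tilde\pi_1$. Therefore the difference of the two double integrals in the statement equals $\int (g-\tilde g)\,\de(\gamma\otimes\gamma)$. The two terms of the bound integrate separately, each giving $L\int \mathcal{W}({F_Y}_\#\pi_x,{F_{\widetilde Y}}_\#\tilde\pi_{\tilde x})\,\gamma(\de x,\de\tilde x)$, for a total of $2L$ times this integral. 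Taking the infimum over $\gamma$ yields $2L\cdot\mathcal{CW}((x,F_Y)_\#\pi,(x,F_{\widetilde Y})_\#\tilde\pi)$. Unlike in Lemma \ref{lem:estimate}, no marginal term appears, so Lemma \ref{lem:second} is not even needed.

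The main point of care is the identification of the integral in $u$ with the Wasserstein distance when $F_Y$ is not continuous, since $\bar F$ includes atoms. I would handle it as in the proof of Lemma \ref{lem:estimate}: the survival function evaluated at $F^{-1}(u)$ coincides for a.e.\ $u$ with $\P(F_Y(Y)\ge u\mid X=x)$, and the left- and right-continuous versions differ only on a countable set. The product-coupling step is elementary.
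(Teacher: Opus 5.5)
Your proposal is correct, and it is essentially the argument the paper intends when it says the corollary follows ``the same arguments as in the proof of Lemma \ref{lem:estimate}'': you bound the integrands pointwise by two conditional Wasserstein terms via the Lipschitz property of $\varphi$ and the quantile/$L^1$ identification, then integrate against $\gamma\otimes\gamma$ and optimize over $\gamma$. The only difference from Lemma \ref{lem:estimate} is where the factor $2$ comes from: here it arises from the two conditional terms rather than from a marginal term, so, as you note, Lemma \ref{lem:second} is not needed.
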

Lemma \ref{lem:estimate} and Corollary \ref{lem:estimate2} imply the following.
\begin{theorem}[\(\cIW\)-continuity of \(\xi_\varphi\) and \(\Lambda_\varphi\)]\label{thm:continuity}
Let $(\pi^n)$ be a sequence of probability measures in $\mathcal{P}(\R^k\times \R)$ and let $\pi\in \mathcal{P}(\R^k\times \R)$. Take $(X^n,Y^n)\sim \pi^n$, $(X,Y)\sim \pi$ and define $\tilde\pi^n:= (x,F_{Y^n})_\#\pi^n$ and $\tilde\pi:= (x,F_Y)_\#\pi$. Then we have
\begin{align*}
\mathcal{CW}(\tilde\pi^n, \tilde\pi)\to 0 \quad \Rightarrow\quad  \xi_\varphi(\pi^n) \to \xi_\varphi(\pi) \quad and \quad \Lambda_\varphi(\pi^n) \to \Lambda_\varphi(\pi).
\end{align*}
\end{theorem}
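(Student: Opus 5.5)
The plan is to treat numerators and denominators of \(\xi_\varphi\) and \(\Lambda_\varphi\) separately. For \(\xi_\varphi\), write \(\xi_\varphi(\pi^n) = N_n/M_n\) and \(\xi_\varphi(\pi) = N/M\). The numerators satisfy \(|N_n - N| = |D(\pi^n,\pi)|\). By Lemma \ref{lem:estimate}, with \(\tilde\pi^n = (x,F_{Y^n})_\#\pi^n\) and \(\tilde\pi = (x,F_Y)_\#\pi\), this gives
\begin{align*}
|N_n-N| \le 2L\cdot \mathcal{CW}(\tilde\pi^n,\tilde\pi) \to 0.
\end{align*}
For \(\Lambda_\varphi\) the same bound on the numerators follows from Corollary \ref{lem:estimate2}.

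The denominators are the main point. If the marginals \(F_{Y^n}\) and \(F_Y\) are continuous, then by \eqref{eq_denom_xi_phi} both denominators of \(\xi_\varphi\) equal the constant \(\int_0^1 \varphi(u)(1-u)+\varphi(u-1)u\,\de u\). This constant is strictly positive by strict convexity of \(\varphi\) at \(0\) and \(\varphi(0)=0\). The same reasoning works for \(\Lambda_\varphi\): with continuous marginals the denominator depends only on the law of \(\1_{\{z\ge y\}}-\1_{\{z'\ge y\}}\) for \(y,z,z'\) i.i.d.\ uniform after transformation. That denominator is \(\tfrac13(\varphi(1)+\varphi(-1))>0\), independent of \(n\). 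In this case the quotient converges because the numerators converge and the denominators are one fixed positive constant.

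For general, possibly discontinuous, marginals I would express the denominator through the transformed measures as well. Rewrite \(\int\varphi(\1_{\{z\ge y\}}-\bar F_Y(y))\,\pi_2(\de z)\pi_2(\de y)\) in terms of \(F_Y{}_\#\pi_2\), using \(F(y)\ge u \Leftrightarrow y\ge F^{-1}(u)\) as in the proof of Lemma \ref{lem:estimate}. Then the Lipschitz bound for \(\varphi\) on \([-1,1]\) controls the difference of denominators by \(c\,\mathcal{W}(F_{Y^n}{}_\#\pi^n_2, F_Y{}_\#\pi_2)\). By Lemma \ref{lem:second} this is at most \(c\,\mathcal{CW}(\tilde\pi^n,\tilde\pi)\to 0\). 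Positivity of the limiting denominator follows from the standing assumption that \(\pi_2\) is non-degenerate. Hence \(M_n\to M>0\), and the quotient converges.

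I expect this last step to be the main obstacle: showing that the denominators converge, or are exactly constant, without continuity of the marginals. That needs care because \(\bar F\) and \(F\) differ at atoms. If it becomes delicate, I would restrict to the continuous-marginal case the paper mainly uses, where the denominators are literally constant and the result reduces immediately to Lemma \ref{lem:estimate} and Corollary \ref{lem:estimate2}.
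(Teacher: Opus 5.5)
Your proposal is correct and follows essentially the paper's route: numerators via Lemma~\ref{lem:estimate} and Corollary~\ref{lem:estimate2}, and denominators via Lipschitz dependence on the survival function of \(F_Y{}_\#\pi_2\) together with \(\cW(F_{Y^n}{}_\#\pi^n_2, F_Y{}_\#\pi_2)\le \cIW(\tilde\pi^n,\tilde\pi)\) from Lemma~\ref{lem:second}. The paper first integrates out \(z\), writing the inner integral as \(f(\bar F_{Y^n}(y))\) with \(f(s)=s\varphi(1-s)+(1-s)\varphi(-s)\) Lipschitz; this also covers the change of integrating measure, which a bare Lipschitz bound on \(\varphi\) does not (you would additionally need boundedness of \(\varphi\) on \([-1,1]\)), and there is a harmless slip in that the continuous-marginal \(\Lambda_\varphi\) denominator is \(\tfrac16(\varphi(1)+\varphi(-1))\), not \(\tfrac13(\varphi(1)+\varphi(-1))\).
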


\begin{proof}
    By Lemma \ref{lem:estimate}, the numerator of \(\xi_\varphi(\pi^n)\) converges to the numerator of \(\xi_\varphi(\pi)\). For the denominators, we first note that 
    \begin{align*}
        \int \varphi\big(\mathbf{1}_{\{z\ge y\}} -\bar{F}_{Y^n}(y)\big)\, \pi^n_2(dz)= \bar{F}_{Y^n}(y)\, \varphi(1-\bar{F}_{Y^n}(y)) + (1- \bar{F}_{Y^n}(y)) \,\varphi(-\bar{F}_{Y^n}(y)) =: f(\bar{F}_{Y^n}(y)).
    \end{align*}
    As $f$ is Lipschitz, the claim follows from $\mathcal{W}(F_{Y^n}{}_{\#}\pi_2^n, F_Y{}_{\#}\pi_2)\to 0$. 
    The statement for \(\Lambda_\varphi\) follows similarly with Corollary \ref{lem:estimate2} noting that the denominator 
    \begin{align*}
        &\int \varphi\left( \1_{\{z\geq y\}} - \1_{\{z'\geq y\}}\right)\, \pi^n_2(\de z) \,\pi^n_2 (\de z') \\
        & = \left[\varphi(-1) + \varphi(1)\right] \bar{F}_{Y^n}(y)(1-\bar{F}_{Y^n}(y)) + \varphi(0) [\bar{F}_{Y_n}(y)^2+(1-\bar{F}_{Y_n}(y))^2]
    \end{align*}
    is Lipschitz in \(\bar{F}_{Y^n}(y)\).
\end{proof}
If the cdf \(y\mapsto F_Y(y)\) is continuous, then  $
\mathcal{CW}(\pi,\pi^n)\to 0$ implies \(\mathcal{CW}(\tilde\pi^n,\tilde\pi)\to 0\), as the following result states.
\begin{corollary} \label{cor:rearr}
In the setting of Theorem \ref{thm:continuity}, let $y\mapsto F_Y(y)$ be continuous. Then we have
\begin{align*}
\mathcal{CW}(\pi^n, \pi)\to 0 \quad \Rightarrow\quad \mathcal{CW}(\tilde{\pi}^n, \tilde{\pi})\to 0 \quad \Rightarrow\quad  \xi_\varphi(\pi^n) \to \xi_\varphi(\pi) \quad \text{and} \quad \Lambda_\varphi(\pi^n) \to \Lambda_\varphi(\pi).
\end{align*}
\end{corollary}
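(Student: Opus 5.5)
The second implication is Theorem \ref{thm:continuity}, so only $\mathcal{CW}(\pi^n,\pi)\to 0 \Rightarrow \mathcal{CW}(\tilde\pi^n,\tilde\pi)\to 0$ needs proof. Fix a coupling $\gamma\in\Pi(\pi^n_1,\pi_1)$; it is also a coupling of the first marginals of $\tilde\pi^n$ and $\tilde\pi$, since the push-forward by $(x,F_{Y})$ leaves the first coordinate unchanged. The kernels transform as $\tilde\pi^n_x = F_{Y^n}{}_\#\pi^n_x$ and $\tilde\pi_{\tilde x}=F_Y{}_\#\pi_{\tilde x}$. The triangle inequality then gives
\begin{align*}
\mathcal{W}(F_{Y^n}{}_\#\pi^n_x, F_Y{}_\#\pi_{\tilde x}) \le \mathcal{W}(F_{Y^n}{}_\#\pi^n_x, F_{Y}{}_\#\pi^n_x) + \mathcal{W}(F_{Y}{}_\#\pi^n_x, F_Y{}_\#\pi_{\tilde x}).
\end{align*}
The first term is at most $\int |F_{Y^n}(y)-F_Y(y)|\,\pi^n_x(\de y)\le \sup_y|F_{Y^n}(y)-F_Y(y)|=:\delta_n$, uniformly in $x$. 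Integrating against $\gamma$ and taking the infimum therefore reduces everything to two tasks: showing $\delta_n\to0$, and controlling the second term by a quantity that vanishes when $\int \mathcal{W}(\pi^n_x,\pi_{\tilde x})\,\gamma$ is small.

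For $\delta_n$, Lemma \ref{lem:second} gives $\mathcal{W}(\pi^n_2,\pi_2)\le \mathcal{CW}(\pi^n,\pi)\to0$, so $\pi^n_2\to\pi_2$ weakly. Since $F_Y$ is continuous, Pólya's theorem (Lemma \ref{lem:polya}) yields $\delta_n\to 0$.

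The second term is the main obstacle, because $F_Y$ is only continuous, not Lipschitz, so $\mathcal{W}(F_Y{}_\#\mu,F_Y{}_\#\nu)\le c\,\mathcal{W}(\mu,\nu)$ is not available. Instead I will use uniform continuity after truncation. Tightness of $\{\pi^n_2\}\cup\{\pi_2\}$ gives, for $\varepsilon>0$, a compact interval $K$ with $\pi^n_2(K^c),\pi_2(K^c)<\varepsilon$. On a slightly larger compact set $F_Y$ has a modulus of continuity $\omega$, and outside it the increments of $F_Y$ are at most $\varepsilon$ (near the tails $F_Y$ is close to $0$ or $1$). Take an optimal coupling $\rho$ of $(\pi^n_x,\pi_{\tilde x})$, use that $F_Y$ takes values in $[0,1]$, and apply Markov's inequality on $\{|y-\tilde y|>\eta\}$. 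This gives
\begin{align*}
\mathcal{W}(F_Y{}_\#\pi^n_x,F_Y{}_\#\pi_{\tilde x})\le \omega(\eta)+2\varepsilon + \eta^{-1}\mathcal{W}(\pi^n_x,\pi_{\tilde x}) + \pi^n_x(K^c)+\pi_{\tilde x}(K^c).
\end{align*}
Integrating over $\gamma$ turns the last two terms into $\pi^n_2(K^c)+\pi_2(K^c)<2\varepsilon$, and the Wasserstein term becomes $\eta^{-1}\int\mathcal{W}(\pi^n_x,\pi_{\tilde x})\,\gamma$.

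Now choose $\gamma$ nearly optimal for $\mathcal{CW}(\pi^n,\pi)$. This yields $\limsup_n \mathcal{CW}(\tilde\pi^n,\tilde\pi)\le \omega(\eta)+4\varepsilon$. Letting $\eta\downarrow0$ and then $\varepsilon\downarrow 0$ shows $\mathcal{CW}(\tilde\pi^n,\tilde\pi)\to0$, and Theorem \ref{thm:continuity} completes the chain.
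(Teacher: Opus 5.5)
Your proof is correct and follows the paper's route. You use the same triangle inequality through the intermediate measure $(x,F_Y)_\#\pi^n$, and you control the first term by $\sup_y|F_{Y^n}(y)-F_Y(y)|\to 0$ via Lemma \ref{lem:second} and P\'olya's lemma, exactly as the paper does.

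The only difference is the last step. The paper handles the second term by approximating the uniformly continuous $F_Y$ with Lipschitz functions, whereas you write out an equivalent explicit modulus-of-continuity plus Markov estimate. Your truncation to $K$, and the tightness it needs, is redundant: a continuous cdf is already uniformly continuous on all of $\R$.
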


\begin{proof}
First recall from Lemma \ref{lem:second} that \(\cIW(\pi^n,\pi)\to 0\) implies \(\cW(\pi^n_2,\pi_2)\to 0\). Then, continuity of \(y\mapsto F_Y(y)\) and Polya's lemma together with $\mathcal{W}(\pi_2^n, \pi_2)\to 0$ imply $\sup_{y\in \R}|F_{Y^n}(y)-F_Y(y)|\to 0$. Next, by the triangle inequality we note that
\begin{align}\label{eq:IW1}
\begin{split}
\mathcal{CW}\big((x,F_{Y^n}){}_\#\pi^n, (x,F_{Y}){}_\#\pi\big)
&\le \mathcal{CW}\big( (x,F_{Y^n})_{\#}\pi^n,(x,F_{Y}){}_\#\pi^n\big) 
+ \mathcal{CW} \big((x,F_{Y}){}_\#\pi^n, (x,F_{Y}){}_\#\pi\big)\\
&\le \int \mathcal{W}(F_{Y^n}{}_\#\pi^n_{x}, F_{Y}{}_\# \pi^n_{x})\,\pi^n_1(\de x)
+ \mathcal{CW} \big((x,F_{Y}){}_\#\pi^n, (x,F_{Y}{})_\#\pi\big)\\
&\le \sup_{y\in \R} |F_{Y^n}(y)-F_Y(y)|  + \mathcal{CW} \big((x,F_{Y}){}_\#\pi^n, (x,F_{Y}){}_\#\pi\big).
\end{split}
\end{align}
The first term converges to zero, as shown above. For the second term we note that
$\mathcal{CW} (\pi^n,\pi)\to 0$ by assumption, so that the claim follows for Lipschitz continuous $F_Y$, noting that $((x,F_{Y})_{\#}\pi^n)_x=F_{Y}{}_\#\pi^n_x$. The general case follows from a standard approximation result of continuous functions by Lipschitz functions, as $y \mapsto F_Y(y)$ is uniformly continuous.
\end{proof}

Consider now the estimators
\begin{align}
    \hat{\xi}_\varphi^N := \xi_\varphi(\hat{\pi}^{N}_c) \quad \text{and} \quad \hat{\Lambda}_\varphi^N := \Lambda_\varphi(\hat{\pi}^N_c)
\end{align}
based on the continuous adapted empirical measure $\hat{\pi}^N_c$, as well as the checkerboard estimators
\begin{align}
    \hat{\xi}_\varphi^{N,\#} := \xi_\varphi(\hat{\pi}^{N,\#}) \quad and \quad \hat{\Lambda}_\varphi^{N,\#} := \Lambda_\varphi(\hat{\pi}^{N,\#}).
\end{align}
The following result follows from Theorem \ref{thm:continuity} together with \eqref{asconv_AEM} and \eqref{eq: general_inequalities}.

\begin{corollary}[Strong consistency]\label{cor:strong_consis}
Take $(X,Y)\sim \pi\in \mathcal{P}_c([0,1]^2)$. Then we have \(\hat{\xi}_\varphi^N \to \xi_\varphi(\pi)\) and \(\hat{\Lambda}_\varphi^N \to \Lambda_\varphi(\pi)\), as well as \(\hat{\xi}_\varphi^{N,\#} \to \xi_\varphi(\pi)\) and \(\hat{\Lambda}_\varphi^{N,\#} \to \Lambda_\varphi(\pi)\) \(\P\)-almost surely as \(N\to \infty\).
\end{corollary}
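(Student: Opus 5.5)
The plan is to reduce everything to Theorem~\ref{thm:continuity}: it suffices to show, $\P$-almost surely, that $\mathcal{CW}(\tilde\pi^N,\tilde\pi)\to 0$, where $\tilde\pi^N$ denotes $(x,F_{Y^N})_\#$ applied to the respective estimator and $\tilde\pi=(x,F_Y)_\#\pi$. Since $\pi\in\mathcal{P}_c([0,1]^2)$, the cdf $F_Y$ is continuous, so Corollary~\ref{cor:rearr} applies. It is therefore enough to prove $\mathcal{CW}(\hat\pi^N_c,\pi)\to0$, respectively the analogous statement for the checkerboard measure.

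\emph{Adapted estimators.} By \eqref{asconv_AEM_c} in Theorem~\ref{lem:triangle_est}, which rests on \eqref{asconv_AEM}, we have $\mathcal{KR}(\hat\pi^N_c,\pi)\to0$ almost surely. The chain \eqref{eq: general_inequalities} gives $\mathcal{CW}\le\mathcal{AW}\le\mathcal{KR}$, so $\mathcal{CW}(\hat\pi^N_c,\pi)\to0$ almost surely. Corollary~\ref{cor:rearr} then yields $\hat\xi^N_\varphi\to\xi_\varphi(\pi)$ and $\hat\Lambda^N_\varphi\to\Lambda_\varphi(\pi)$.

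\emph{Checkerboard estimators.} Here I would not go through $\pi$ directly, because the checkerboard measure lives on rank-transformed data. Instead I apply Theorem~\ref{thm:continuity} with the reference measure $\pi$ and the sequence given by the estimator. The estimator has uniform marginals, so its $F_{Y^N}$ is the identity on $[0,1]$ and $\tilde\pi^N$ is the estimator itself. On the other side, $(x,F_Y)_\#\pi$ and $\bar\pi=(F_X,F_Y)_\#\pi$ differ only by the transformation $F_X$ in the first coordinate. This relabelling of $x$ leaves $\mathcal{CW}$ unchanged, since $\mathcal{CW}$ only sees the law of the kernels $(x\mapsto\pi_x)_\#\pi_1$ and $F_X$ is a bijection $\pi_1$-a.s. for continuous $F_X$.

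Hence it suffices to show $\mathcal{CW}(\text{estimator},\bar\pi)\to0$ almost surely. For $\hat\pi^{N,\#}_c$ this is exactly Corollary~\ref{cor:checkerboar}. The estimators in the statement use the discrete measure $\hat\pi^{N,\#}$ rather than $\hat\pi^{N,\#}_c$, and this is the main obstacle. I would bound $\mathcal{KR}(\hat\pi^{N,\#},\hat\pi^{N,\#}_c)\le cN^{-1/3}$ exactly as in the proof of Theorem~\ref{lem:triangle_est}. The remaining issue is that $\hat\pi^{N,\#}$ has a discontinuous second marginal, so $F_{Y^N}$ is a step function rather than the identity. I would handle this with the estimate \eqref{eq:IW1}: it controls the change caused by replacing the marginal cdf by $\sup_y|F_{Y^N}(y)-y|\le c/n$, which holds because each row of bins carries mass exactly $1/n$. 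Combining these pieces gives $\mathcal{CW}(\tilde\pi^N,\tilde\pi)\to0$ almost surely, and Theorem~\ref{thm:continuity} then concludes the proof.
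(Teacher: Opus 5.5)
Your proposal is correct. For the adapted estimators it is exactly the paper's argument: $\mathcal{CW}(\hat\pi^N_c,\pi)\le\mathcal{KR}(\hat\pi^N_c,\pi)\to 0$ almost surely by Theorem~\ref{lem:triangle_est} and \eqref{eq: general_inequalities}, and then Corollary~\ref{cor:rearr}. The paper's proof stops there and leaves the checkerboard estimators implicit, so your second half completes the paper's argument rather than replacing it.

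You rightly note that Corollary~\ref{cor:rearr} cannot be applied with reference measure $\pi$ in the checkerboard case. The reason is that $\hat\pi^{N,\#}_c$ converges to $\bar\pi=(F_X,F_Y)_\#\pi$, and $\mathcal{CW}(\bar\pi,\pi)\neq 0$ in general; for instance, for $\pi=\pi_1\otimes\pi_2$ with non-uniform $\pi_2$ it equals $\mathcal{W}(\mathcal{U},\pi_2)$. Your fix is sound. You use $\mathcal{CW}(\bar\pi,(x,F_Y)_\#\pi)=0$, which holds because $F_X$ is injective off a $\pi_1$-null set, so both measures have the same law of kernels. You combine this with Corollary~\ref{cor:checkerboar} and Theorem~\ref{thm:continuity}. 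Equivalently, one could apply Corollary~\ref{cor:rearr} with reference $\bar\pi$ and use the rank invariance $\xi_\varphi(\bar\pi)=\xi_\varphi(\pi)$, which rests on the same relabeling fact.

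Your extra step for the atomic $\hat\pi^{N,\#}$ also goes through. This is the literal definition of $\hat\xi^{N,\#}_\varphi$ in the paper, although the parallel with $\hat\xi^N_\varphi=\xi_\varphi(\hat\pi^N_c)$ suggests $\hat\pi^{N,\#}_c$ was intended. The triangle inequality for the pseudometric $\mathcal{CW}$ gives
\[
\mathcal{CW}(\tilde\pi^N,\tilde\pi)\le \sup_{y\in[0,1]}|F_{Y^N}(y)-y|+\mathcal{KR}(\hat\pi^{N,\#},\hat\pi^{N,\#}_c)+\mathcal{CW}(\hat\pi^{N,\#}_c,\bar\pi)+\mathcal{CW}(\bar\pi,\tilde\pi).
\]
Here the first two terms are $O(N^{-1/3})$, the third tends to zero almost surely, and the last vanishes. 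Finally, Theorem~\ref{thm:continuity} is stated for arbitrary laws in $\mathcal{P}(\R^k\times\R)$, and its proof (via Lemma~\ref{lem:estimate} and the denominator argument) only uses $F(y)\ge u\Leftrightarrow y\ge F^{-1}(u)$. Applying it to the atomic $\hat\pi^{N,\#}$ is therefore legitimate.
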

\begin{proof}
    The claim then follows from Corollary \ref{cor:rearr} recalling  $\mathcal{CW}( \hat{\pi}^N_c, \pi)\to 0$ as a consequence of Theorem \ref{lem:triangle_est} and \eqref{eq: general_inequalities}.
\end{proof}

Finally, we obtain the following rates for plug-in estimators of dependence measures based on Section \ref{sec:estimation}.

\begin{theorem}[Rates of convergence]\label{cor:rates}
Assume that $\pi\in \mathcal{P}_c([0,1]^2)$ has Lipschitz kernels and that $y\mapsto F_Y(y)$ is Lipschitz. Then we have
\begin{align*}
    \E\big[|\hat{\xi}_\varphi^N - \xi_\varphi(\pi)|\big] &\le cN^{-1/3} \quad &&\text{and} \quad &\E\big[|\hat{\Lambda}_\varphi^N - \Lambda_\varphi(\pi)|\big] &\le cN^{-1/3} \\
    \text{as well as} \qquad \E\big[|\hat{\xi}_\varphi^{N,\#} - \xi_\varphi(\pi)|\big] &\le cN^{-1/3} \quad &&\text{and} \quad &\E\big[|\hat{\Lambda}_\varphi^{N,\#} - \Lambda_\varphi(\pi)|\big] &\le cN^{-1/3}.
\end{align*}
\end{theorem}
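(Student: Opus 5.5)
The plan is to reduce everything to the $\mathcal{CW}$-Lipschitz bounds of Lemma \ref{lem:estimate} and Corollary \ref{lem:estimate2}, and then apply the copula-level rates from Corollary \ref{thm_rates_C_N} and Theorem \ref{thm:rates}.

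\textbf{Step 1: pass to copulas.} Each functional only depends on $\pi$ through $\tilde\pi := (x,F_Y)_\#\pi$, and in fact only through the copula $C_\pi$. The reason is that the conditional survival functions are evaluated at $F_Y^{-1}(u)$, and replacing $x$ by $F_X(x)$ merely reparametrizes the integral against $\pi_1$. So $\xi_\varphi(\pi)=\xi_\varphi(\pi_{C_\pi})$, where $\pi_{C_\pi}\in\Pi(\cU,\cU)$ is the measure induced by $C_\pi$. The same holds for the plug-ins with $\hat C_N$, $\hat C_N^\#$, and for $\Lambda_\varphi$.

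\textbf{Step 2: numerators.} For two couplings $\rho,\tilde\rho\in\Pi(\cU,\cU)$ we have $F_Y=\mathrm{id}$. Lemma \ref{lem:estimate} then gives
\[
|D(\rho,\tilde\rho)|\le 2L\,\mathcal{CW}(\rho,\tilde\rho),
\]
and by \eqref{eq: general_inequalities} and Lemma \ref{lem:KR},
\[
\mathcal{CW}(\rho,\tilde\rho)\le \mathcal{KR}(\rho,\tilde\rho)=d_1(C_\rho,C_{\tilde\rho}).
\]
Corollary \ref{lem:estimate2} gives the same bound for the numerator of $\Lambda_\varphi$.

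\textbf{Step 3: denominators.} By \eqref{eq_denom_xi_phi}, the denominator of $\xi_\varphi$ for continuous marginals is the constant $\int_0^1\varphi(u)(1-u)+\varphi(u-1)u\,\de u$. This constant is positive because $\varphi$ is strictly convex at $0$ with $\varphi(0)=0$, so it is nonnegative and not identically zero near $0$. Hence the denominator is identical for estimator and truth. The denominator of $\Lambda_\varphi$ is likewise a fixed positive constant once $F_Y$ is continuous. Every estimator here has continuous, indeed uniform, marginals after passing to copulas. Combining Steps 1–3,
\[
|\hat\xi_\varphi^N-\xi_\varphi(\pi)|\le c\, d_1(\hat C_N,C_\pi),
\]
and analogously for $\hat\xi_\varphi^{N,\#}$ and both $\Lambda$ estimators.

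\textbf{Step 4: rates.} Taking expectations, Corollary \ref{thm_rates_C_N} gives $\E[d_1(\hat C_N,C_\pi)]\le cN^{-1/3}$. This uses the hypotheses that $\pi$ has Lipschitz kernels and that $F_Y$ is Lipschitz. For the checkerboard estimators, Theorem \ref{thm:rates} requires that $\bar\pi=(F_X,F_Y)_\#\pi$ has Lipschitz kernels. This follows because $\bar\pi_u=(F_Y)_\#\pi_{F_X^{-1}(u)}$. Since $F_Y$ is Lipschitz, the map $\mu\mapsto (F_Y)_\#\mu$ is Lipschitz in $\mathcal W$. It remains to control $u\mapsto F_X^{-1}(u)$.

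\textbf{Main obstacle.} The map $u\mapsto F_X^{-1}(u)$ is Lipschitz only if the density of $\pi_1$ is bounded below. I would try to avoid that extra hypothesis by bounding the $\mathcal{CW}$ term directly: estimate $\int|u-\bar U_j|$ in the proof of Theorem \ref{thm:rates} through the kernel in the $x$ variable, using $\mathcal{KR}$ with the original first coordinate. Failing that, I would note that a lower bound on the marginal density is implicit, or interpret the statement with $\bar\pi$ having Lipschitz kernels as in Theorem \ref{thm:rates}. Either way, I expect this verification of the Lipschitz-kernel hypothesis for $\bar\pi$ to be the main difficulty; the rest is immediate from the earlier results.
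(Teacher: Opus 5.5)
Your treatment of $\hat\xi_\varphi^N$ and $\hat\Lambda_\varphi^N$ is correct and is essentially the paper's argument. The paper bounds $|D(\pi,\hat\pi^N_c)|$ by $2L\,\mathcal{CW}$ of the $F_Y$-transformed measures, splits off $\sup_y|F_{Y^N}(y)-F_Y(y)|$ via \eqref{eq:IW1}, and finishes with $\mathcal{CW}\le\mathcal{KR}$, Theorem \ref{lem:triangle_est} and DKW. You instead pass to copulas first and cite Corollary \ref{thm_rates_C_N}, whose proof through \eqref{eq:easy} uses exactly these ingredients. Two small points:
\begin{itemize}
\item The justification in Step 3 is off. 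Convexity, strict convexity at $0$ and $\varphi(0)=0$ do not make $\varphi$ nonnegative (take $\varphi(x)=x+x^2$). Positivity of the denominator comes instead from $(1-u)\varphi(u)+u\varphi(u-1)>\varphi(0)=0$ for $u\in(0,1)$.
\item Step 3 reads the checkerboard plug-in as $\xi_\varphi(\hat\pi^{N,\#}_c)$, i.e.\ through $\hat C_N^\#$, rather than the discrete $\hat\pi^{N,\#}$ written in its definition. That is surely the intended reading, but it is what makes the denominators coincide.
\end{itemize}

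The checkerboard half is left open, and the obstacle you identify is genuine: the hypotheses do not imply that $\bar\pi$ has Lipschitz kernels. Take $X$ uniform on $[0,\tfrac13]\cup[\tfrac23,1]$ and $Y=\tfrac12X+\tfrac12W$ with $W\sim\mathcal{U}$ independent. Then $\pi_x=\mathcal{U}^{[x/2,\,x/2+1/2]}$ is $\tfrac12$-Lipschitz in $\mathcal{W}$ and $Y$ has density at most $2$. But $\bar\pi_u=(F_Y)_\#\pi_{F_X^{-1}(u)}$ jumps at $u=\tfrac12$, where $F_X^{-1}$ jumps from $\tfrac13$ to $\tfrac23$. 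The paper's proof only says the checkerboard rates ``follow similarly'', presumably via Theorem \ref{thm:rates}, so it does not address this either.

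Your option (b) would change the statement, but option (a) works and is short. In the proof of Theorem \ref{thm:rates}, kernel regularity enters only the first of the three terms. Work there with the version $\bar\pi_v:=(F_Y)_\#\pi_{F_X^{-1}(v)}$ and use $F_X^{-1}(F_X(X_j))=X_j$ a.s. to replace $c|u-\bar U_j|$ by
\[
\mathcal{W}(\bar\pi_u,\bar\pi_{\bar U_j})=\mathcal{W}\big((F_Y)_\#\pi_{F_X^{-1}(u)},(F_Y)_\#\pi_{X_j}\big)\le c\,|F_X^{-1}(u)-X_j| .
\]
Then insert the empirical quantile function $F_{X^N}^{-1}$. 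With the ordering $X_1\le\dots\le X_N$ used there, for $u\in S_i^n$ both $F_{X^N}^{-1}(u)=X_{\lceil Nu\rceil}$ and all $X_j$ with $(i-1)m<j\le im$ lie in $[X_{(i-1)m+1},X_{im}]$. These intervals have total length at most $1$ because the data lie in $[0,1]$. Hence
\[
\sum_{i=1}^n\int_{S_i^n}\frac1m\sum_{j=(i-1)m+1}^{im}|F_X^{-1}(u)-X_j|\,\de u\le\int_0^1|F_X^{-1}(u)-F_{X^N}^{-1}(u)|\,\de u+\frac1n=\mathcal{W}(\pi_1,\pi_1^N)+\frac1n ,
\]
whose expectation is at most $cN^{-1/2}+N^{-1/3}$. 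The other two terms never use kernel regularity. So $\E[\mathcal{KR}(\hat\pi^{N,\#}_c,\bar\pi)]\le cN^{-1/3}$ holds under the stated hypotheses, and your Steps 1--3 then finish the proof. This uses only that $F_X^{-1}$ is monotone and bounded, not a lower bound on the density of $\pi_1$.
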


\begin{proof}
Let $(X,Y)\sim \pi$ and $(X^N,Y^N)\sim \hat{\pi}^N_c$. For the numerator of \(\xi_\varphi\),
    note that Lemma \ref{lem:estimate} yields
    \begin{align*}
        |D(\pi, \hat{\pi}^N_c)| \le 2L \cdot \mathcal{CW} \big((x,F_{Y^N}(y))_{\#}\hat{\pi}^N_c ,(x,F_Y(y))_{\#}\pi\big).
    \end{align*}
    Next, recall that $\pi \in \mathcal{P}_c([0,1]^2)$ and $F_Y$ is Lipschitz, so \eqref{eq:IW1} gives
    \begin{align*}
        \mathcal{CW} \big((x,F_{Y^N}(y))_{\#}\hat{\pi}^N_c, (x,F_Y(y))_{\#}\pi \big) &\le \sup_{y\in \R} |F_{Y^N}(y)-F_Y(y)| +\mathcal{CW}((x,F_Y)_{\#}\hat{\pi}^N_c, (x,F_Y)_{\#}\pi)\\
        &\le \sup_{y\in \R} |F_{Y^N}(y)-F_Y(y)| +\mathcal{CW}(\hat{\pi}^N_c, \pi).
    \end{align*}
    As in the proof of Corollary \ref{thm_rates_C_N} we can now use the DKW inequality combined with \eqref{subcube_bound} to obtain $\E[\sup_{y\in \R} |F_{Y^N}(y)-F_Y(y)|]\le cN^{-1/3}$. Lastly, from \eqref{eq: general_inequalities} we obtain $\mathcal{CW}\le \mathcal{KR}$ and $\E[\mathcal{KR}(\pi, \hat{\pi}^N_c)]\le cN^{-1/3}$ follows from Theorem \ref{lem:triangle_est}. This yields
    \begin{align*}
        \E \big[|D(\pi,\hat{\pi}^N_c)|\big] \leq c N^{-1/3}.
    \end{align*}
    Since \(Y\) and \(Y^N\) have a continuous distribution function \(\P\)-almost surely, the denominators of \(\xi_\varphi(\pi)\) and \(\xi_\varphi(\hat{\pi}^N)\) coincide; see \eqref{eq_denom_xi_phi}. This concludes the proof for the continuous adapted empirical measure. The rates based on the empirical checkerboard measure $\hat{\pi}^{N,\#}$ follow similarly.
\end{proof}

By Proposition \ref{prop:comparison} the conclusions of Corollary \ref{cor:strong_consis} and Theorem \ref{cor:rates} remain unchanged if one replaces $\hat{\pi}^N_c$ by $(\hat{\pi}^N_c)^\uparrow$.

\section{Rearranged dependence measures}\label{sec_RDM}

Rearranged dependence measures \cite{strothmann2022} are defined via increasing rearranged copulas applied to a functional \(\eta\) that satisfies axioms \ref{axiom1}--\ref{axiom3} on the class of SI copulas. More
precisely, we assume that \(\eta\colon \cC^\uparrow \to \R\) fulfills
\begin{enumerate}[label = (C\arabic*)]
    \item \label{axiom_eta1} \(\eta(C)\in [0,1]\) for all \(C\in \cC^\uparrow\),
    \item \label{axiom_eta2} \(\eta(C) = 0 \text{ if and only if } C = C^\perp\),
    \item \label{axiom_eta3} \(\eta(C) = 1 \text{ if and only if } C=C^+\).
\end{enumerate}
Estimating rearranged dependence measures also requires continuity of \(\eta\) with respect to pointwise convergence of copulas, i.e.
\begin{enumerate}[label = (C\arabic*)]
\setcounter{enumi}{3}
    \item \label{axiom_eta4} For \(C_n,C\in \cC^\uparrow,\) \(C_n(u,v) \to C(u,v)\) for all \((u,v)\in [0,1]^2\) implies \(\eta(C_n) \to \eta(C)\).
\end{enumerate}
Popular examples of functionals $\eta$ that satisfy the properties \ref{axiom_eta1}--\ref{axiom_eta4} are Spearman's rho, Kendall's tau or Schweizer-Wolff measures; see \cite{strothmann2022}. Further examples are standardized linear functionals of the form \(\pi\mapsto\int g \,\de \pi\) where \(g\colon [0,1]^2\to \R\) is supermodular.\footnote{A function \(g\colon [0,1]^2\to \R\) is said to be supermodular if \(g(u) + g(v) \leq g(u\wedge v) + g(u\vee v)\) for all \(u,v\in [0,1]^2\) where \(\wedge\) and \(\vee\) denote the componentwise minimum and maximum, respectively.}

\begin{definition}\label{def_RDM}
Assume that \(\eta\) satisfies properties \ref{axiom_eta1}--\ref{axiom_eta3} on the class of SI copulas. Then the \emph{rearranged dependence measure} \(\sR_\eta\) induced by $\eta$ is defined as
\begin{align}\label{def_rdm}
    \sR_\eta(\pi) := \eta(C_\pi^\uparrow)\qquad\text{ for all }\pi \in \mathcal{P}_c(\R^2).
\end{align}
\end{definition}

Recall from Lemma \ref{lem_char_incRearr} that \(C_\pi^\uparrow\) is SI, and \(C_\pi^\uparrow = C^\perp\) if and only if \(\pi = \pi_1 \otimes \pi_2\). Further, \(C_\pi^\uparrow = C^+\) if and only if \(\pi = (x,f(x))_\#\pi_1\) for some measurable function \(f\colon \R \to \R\). Hence, \(\sR_\eta\) is a dependence measure that characterizes independence and perfect dependence on the whole class \(\cP_c(\R^2)\) of bivariate distributions having a continuous distribution function:

\begin{lemma}[{\cite[Theorem 2.4]{strothmann2022}}]
    The rearranged dependence measure \(\sR_\eta\) in \eqref{def_rdm} satisfies the axioms \ref{axiom1}--\ref{axiom3} on \(\cP_c(\R^2)\).
\end{lemma}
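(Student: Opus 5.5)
The plan is to verify the three axioms one at a time, reducing each to properties of the increasing rearrangement collected in Lemma \ref{lem_char_incRearr} together with the assumptions \ref{axiom_eta1}--\ref{axiom_eta3} on \(\eta\). The key structural fact is that \(\sR_\eta(\pi) = \eta(C_\pi^\uparrow)\) is well defined for every \(\pi\in\cP_c(\R^2)\). Sklar's theorem \eqref{eq:sklar} gives a unique copula \(C_\pi\), and the rearrangement \(C_\pi^\uparrow\) from Definition \ref{def:inc_rea} is again a copula. By Lemma \ref{lem_char_incRearr}\,\ref{lem_char_incRearr3}, \(C_\pi^\uparrow\) is SI, so it lies in \(\cC^\uparrow\), the domain of \(\eta\).

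Axiom \ref{axiom1} then follows directly from \ref{axiom_eta1}, since \(C_\pi^\uparrow\in\cC^\uparrow\). For Axiom \ref{axiom2}, property \ref{axiom_eta2} gives \(\sR_\eta(\pi)=0\) if and only if \(C_\pi^\uparrow = C^\perp\). By Lemma \ref{lem_char_incRearr}\,\ref{lem_char_incRearr1}, this holds if and only if \(\pi=\pi_1\otimes\pi_2\). For Axiom \ref{axiom3}, property \ref{axiom_eta3} gives \(\sR_\eta(\pi)=1\) if and only if \(C_\pi^\uparrow=C^+\). By Lemma \ref{lem_char_incRearr}\,\ref{lem_char_incRearr2}, this is equivalent to \(\pi=(x,f(x))_\#\pi_1\) for some measurable \(f\). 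Chaining these equivalences gives the claim.

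The substantive work therefore sits inside Lemma \ref{lem_char_incRearr}, which may be assumed here. If I had to justify it, the main obstacle would be part \ref{lem_char_incRearr2}.

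\emph{Part \ref{lem_char_incRearr1}.} The rearrangement preserves the law of \(F_{Y|X}(y)\) for every \(y\). Hence this random variable is a.s.\ constant, equal to \(F_Y(y)\), exactly when the same holds after rearranging. That is, \(\pi\) is a product if and only if \(\pi^\uparrow\) is. For \(\pi^\uparrow\) this is the same as \(C_\pi^\uparrow=C^\perp\), using Lemma \ref{lem:commute}.

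\emph{Part \ref{lem_char_incRearr2}.} Perfect dependence means \(F_{Y|X}(y)\in\{0,1\}\) a.s.\ for all \(y\). This property is again determined by the laws of \(F_{Y|X}(y)\), so it transfers to \(\pi^\uparrow\). The remaining step is to show that an SI coupling whose kernels are Dirac masses must have an increasing transport map. With continuous marginals, that forces the copula to be \(C^+\).

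\emph{Part \ref{lem_char_incRearr3}.} This follows from the SI property by the standard positive-quadrant-dependence bounds.

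Within this proof the only care needed is the domain check: \(\eta\) is only defined on \(\cC^\uparrow\), and \(C_\pi^\uparrow\) must be a genuine SI copula for each \(\pi\in\cP_c(\R^2)\). This is guaranteed by Definition \ref{def:inc_rea} and Lemma \ref{lem_char_incRearr}\,\ref{lem_char_incRearr3}.
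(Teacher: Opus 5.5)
Your proposal is correct and follows the same route as the paper, which justifies the lemma in the paragraph preceding it. There, $C_\pi^\uparrow$ being SI places it in the domain of $\eta$, and the zero/one characterizations come from chaining \ref{axiom_eta1}--\ref{axiom_eta3} with Lemma~\ref{lem_char_incRearr}\,\ref{lem_char_incRearr1}--\ref{lem_char_incRearr3}; your optional sketches of that lemma also match the appendix proof (transferring the laws of $F_{Y|X}(y)$, and Dirac kernels together with SI forcing $C^+$).
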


A natural estimator for the rearranged copula \(C^\uparrow_\pi\) is the increasing rearranged copula $(\hat{C}^\#_{N})^\uparrow$ of a checkerboard approximation of \(C_\pi\), see Section \ref{sec:comp_checkerboard}. 
More precisely, if \(\eta\) satisfies \ref{axiom_eta1}--\ref{axiom_eta4}, then 
\begin{align}
    \hat{\sR}_{\eta}^{N,\#} := \eta((\hat{C}_N^{\#})^\uparrow),
\end{align}
is a strongly consistent estimator of the rearranged dependence measure \(\sR_\eta(\pi)\) in \eqref{def_rdm}; see \cite[Theorem 3.3]{strothmann2022}. 

In this paper, we also consider an alternative estimator based on the continuous adapted empirical measure $\hat{\pi}^N_c$ in \eqref{eq:def_cont}. Let $\pi \in \mathcal{P}_c([0,1]^2)$.
Recall that  
\begin{align}
    \hat{C}_N^\uparrow = C_{\hat{\pi}_c^N}^\uparrow
\end{align}
denotes the increasing rearranged copula associated with \(\hat{\pi}_c^N\); see \eqref{def_Incr_Rearr_Adap_Emp_Cop}. Note that \(\hat{C}_N^\uparrow\) and \((\hat{C}_N^{\#})^\uparrow\) are different estimators of \(C^\uparrow_\pi\), as discussed in Example \ref{ex:checkerboard}.
Based on the increasing rearranged adapted empirical copula, we propose
\begin{align}\label{est_RDM}
    \hat{\sR}_\eta^N := \eta(\hat{C}_N^\uparrow)
\end{align}
as an estimator for \(\sR_\eta(\pi)\). 
Due to Theorem \ref{thm_conv_C_N}\,$(iii)$, we have \(\|\hat{C}_N^\uparrow- C^\uparrow_\pi\|_\infty \to 0\) \(\P\)-almost surely. Since \(\eta\) satisfies properties \ref{axiom_eta1}--\ref{axiom_eta4}, we obtain the following consistency result for our estimator.

\begin{corollary}[Strong consistency of \(\hat{\sR}_\eta^N\)]\label{cor_consistency_rdm}
    For the estimator in \eqref{est_RDM} of the rearranged dependence measure \eqref{def_rdm}, we have
    \begin{align}
        \hat{\sR}_\eta^N \to \sR_\eta(\pi) \qquad \P\text{-almost surely}.
    \end{align}
\end{corollary}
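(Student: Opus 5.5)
This is a direct combination of the uniform consistency of the rearranged adapted copula estimator with the continuity axiom \ref{axiom_eta4}. The argument works pathwise on a single full-measure event.

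First, I apply Theorem \ref{thm_conv_C_N}\,\ref{thm_conv_C_N3}. It gives an event $\Omega_0$ with $\P(\Omega_0)=1$ on which $\|\hat{C}_N^\uparrow - C_\pi^\uparrow\|_\infty \to 0$. Recall how that statement was obtained:
\begin{itemize}
\item \eqref{asconv_AEM_c} and Proposition \ref{prop:comparison} give $\mathcal{KR}((\hat\pi^N_c)^\uparrow,\pi^\uparrow)\to 0$, as in \eqref{eq:adapted_rearranged};
\item by \eqref{eq: general_inequalities} this yields $\mathcal{W}((\hat\pi^N_c)^\uparrow,\pi^\uparrow)\to 0$;
\item since both measures have continuous distribution functions, Lemma \ref{lem:polya} upgrades this to uniform convergence of the copulas, using Lemma \ref{lem:commute} to identify $C_{(\hat\pi^N_c)^\uparrow}=\hat C_N^\uparrow$.
\end{itemize}
Uniform convergence in particular implies pointwise convergence $\hat{C}_N^\uparrow(u,v)\to C_\pi^\uparrow(u,v)$ for every $(u,v)\in[0,1]^2$.

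Second, I check that all copulas involved lie in the domain $\cC^\uparrow$ of $\eta$. By Lemma \ref{lem_char_incRearr}\,\ref{lem_char_incRearr3}, $C_\pi^\uparrow$ is SI. The same lemma applied to $\hat\pi^N_c\in\cP_c(\R^2)$ shows that $\hat C_N^\uparrow = C^\uparrow_{\hat\pi^N_c}$ is SI. Here $\hat\pi^N_c$ has continuous cdf because it is a finite mixture of uniform distributions on squares.

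Third, on $\Omega_0$ I apply \ref{axiom_eta4} to the sequence $C_N := \hat C_N^\uparrow$ and the limit $C := C_\pi^\uparrow$. This gives $\eta(\hat C_N^\uparrow)\to\eta(C_\pi^\uparrow)$, that is, $\hat\sR^N_\eta\to\sR_\eta(\pi)$ by \eqref{def_rdm} and \eqref{est_RDM}. Since $\P(\Omega_0)=1$, this is almost sure convergence.

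The main point requiring care is that Theorem \ref{thm_conv_C_N} applies even though $\hat\pi^N_c\notin\Pi(\cU,\cU)$. That issue was already handled in its proof, where Lemma \ref{lem:polya} was used rather than Corollary \ref{cor:si}. So I expect no real obstacle beyond citing it and verifying the SI membership.
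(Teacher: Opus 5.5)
Your proposal is correct and follows the paper's argument: Theorem \ref{thm_conv_C_N}\,\ref{thm_conv_C_N3} gives almost sure uniform convergence \(\hat C_N^\uparrow \to C_\pi^\uparrow\), which implies pointwise convergence, and property \ref{axiom_eta4} then yields \(\eta(\hat C_N^\uparrow)\to\eta(C_\pi^\uparrow)\). The paper leaves implicit that both copulas lie in \(\cC^\uparrow\); your explicit check of this via Lemma \ref{lem_char_incRearr}\,\ref{lem_char_incRearr3} is correct.
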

The following result gives convergence speeds for plug-in estimators based on the adapted copula estimator and the checkerboard copula estimator. These rates of convergence resolve an open question in \cite[Section 5]{strothmann2022}.
\begin{theorem}\label{cor:rates_rear}
Assume that $\eta$ satisfies  \ref{axiom_eta1}--\ref{axiom_eta3} and the Lipschitz property
\begin{align}\label{eq:Lipschitz_copula}
    |\eta(C)-\eta(\tilde C)| \le L\cdot d_1(C, \tilde C) \qquad \text{ for all }C, \tilde C\in \mathcal{C}^\uparrow.
\end{align}
If \(\pi\in \cP_c([0,1]^2)\) has Lipschitz kernels \eqref{ass_Lip_kernel} and $y \mapsto F_Y(y)$ is Lipschitz continuous for $Y\sim \pi_2$, then
\begin{align*}
    \E[|\hat{\sR}^N_\eta - \sR_\eta(\pi)|] \le c N^{-1/3}.
\end{align*}
If \(\pi\in \cP_c(\R^2)\) and $\bar\pi = (F_X, F_Y)_{\#}\pi$ has Lipschitz kernels, then 
\begin{align*}
    \E[|\hat{\sR}^{N,\#}_\eta - \sR_\eta(\pi)|] \le c N^{-1/3}.
\end{align*}
\end{theorem}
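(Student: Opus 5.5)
The plan is to reduce both claims to the $d_1$-rates already established for the copula estimators (Corollary \ref{thm_rates_C_N} and Theorem \ref{thm:rates}), using the Lipschitz property \eqref{eq:Lipschitz_copula} and the contraction of $d_1$ under increasing rearrangement (Corollary \ref{cor_incrRear}).

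\emph{Adapted estimator.} By Lemma \ref{lem_char_incRearr}\,\ref{lem_char_incRearr3}, both $\hat C_N^\uparrow$ and $C_\pi^\uparrow$ lie in $\cC^\uparrow$. Hence \eqref{eq:Lipschitz_copula} applies and gives
\begin{align*}
|\hat{\sR}^N_\eta - \sR_\eta(\pi)| = |\eta(\hat C_N^\uparrow)-\eta(C_\pi^\uparrow)| \le L\, d_1(\hat C_N^\uparrow, C_\pi^\uparrow).
\end{align*}
To pass from the rearranged copulas to the copulas themselves, let $\rho:=\pi_{\hat C_N}$ and $\tilde\rho:=\pi_{C_\pi}$ be the induced couplings in $\Pi(\cU,\cU)$. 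By Definition \ref{def:inc_rea}(ii), the copula of $\rho^\uparrow$ is $\hat C_N^\uparrow$, and by Lemma \ref{lem:commute} this agrees with $\hat C_N^\uparrow = C^\uparrow_{\hat\pi_c^N}$ as defined in \eqref{def_Incr_Rearr_Adap_Emp_Cop}. Applying Corollary \ref{cor_incrRear} to $\rho,\tilde\rho$ then yields
\begin{align*}
d_1(\hat C_N^\uparrow, C_\pi^\uparrow)\le d_1(\hat C_N, C_\pi).
\end{align*}
Taking expectations, Corollary \ref{thm_rates_C_N}, which holds under exactly the stated Lipschitz-kernel and Lipschitz-$F_Y$ assumptions, gives $\E[d_1(\hat C_N,C_\pi)]\le cN^{-1/3}$. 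The first bound follows.

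\emph{Checkerboard estimator.} The argument is the same. Both $(\hat C_N^\#)^\uparrow$ and $C_\pi^\uparrow$ are SI, so
\begin{align*}
|\hat{\sR}^{N,\#}_\eta-\sR_\eta(\pi)|\le L\, d_1((\hat C_N^\#)^\uparrow, C_\pi^\uparrow)\le L\, d_1(\hat C_N^\#, C_\pi),
\end{align*}
where the second inequality is again Corollary \ref{cor_incrRear}, applied to $\hat\pi_c^{N,\#}$ and $\bar\pi$. Both lie in $\Pi(\cU,\cU)$, and their copulas are $\hat C_N^\#$ and $C_{\bar\pi}=C_\pi$; the latter holds because $\pi\in\cP_c(\R^2)$ has continuous marginals, so $(F_X,F_Y)$ leaves the copula unchanged. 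Theorem \ref{thm:rates}, under the assumption that $\bar\pi$ has Lipschitz kernels, gives $\E[d_1(\hat C_N^\#,C_\pi)]\le cN^{-1/3}$.

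\emph{Expected obstacle.} Everything above is bookkeeping except one point: the rearrangements are taken at the level of copulas, whereas Corollary \ref{cor_incrRear} is stated for couplings in $\Pi(\cU,\cU)$. For the adapted estimator the relevant measure $\hat\pi_c^N$ does not itself have uniform marginals, so one must work with $\pi_{\hat C_N}$ rather than $\hat\pi_c^N$ and use Lemma \ref{lem:commute} to identify $C_{(\pi_{\hat C_N})^\uparrow}$ with $\hat C_N^\uparrow$. I expect this identification to be routine, since $\hat\pi_c^N\in\cP_c$ and rearrangement preserves marginals. Measurability of the estimators in $N$ is implicit in the cited rate results.
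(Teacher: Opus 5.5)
Your proposal is correct and follows the paper's proof. The chain is the same: the Lipschitz bound for $\eta$ on SI copulas, then the $d_1$-contraction under increasing rearrangement from Corollary \ref{cor_incrRear}, then the $d_1$-rates of Corollary \ref{thm_rates_C_N} and Theorem \ref{thm:rates}. You also make explicit a step the paper leaves implicit: because $\hat\pi_c^N$ does not have uniform marginals, you pass through the induced coupling $\pi_{\hat C_N}\in\Pi(\cU,\cU)$, and you check that $C_{\bar\pi}=C_\pi$ in the checkerboard case.
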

\begin{proof}
    Using \eqref{eq:Lipschitz_copula} and Corollaries \ref{thm_rates_C_N} and \ref{cor_incrRear} we have 
    \begin{align*}
        \E[|\hat{\sR}^N_\eta - \sR_\eta(\pi)|] \le L\cdot  \E[d_1(C_\pi^\uparrow, \hat{C}_N^\uparrow)] \le c N^{-1/3}
    \end{align*}
    as claimed. The second claim follows similarly from Theorem \ref{thm:rates}.
\end{proof}
\begin{remark}
    In the case of the adapted copula estimator in Theorem \ref{cor:rates_rear}, one can perform a marginal transformation to ensure that \(\tilde \pi = (f,g)_\# \pi\in \cP_c([0,1]^2)\); see the discussion in Remark \ref{rem_ACE_CCE}\,\ref{rem_ACE_CCE2}.
\end{remark}

Let us mention two examples of dependence measures, that satisfy \eqref{eq:Lipschitz_copula}: the first one is Spearman's \(\varrho\), which can be expressed as
\begin{align}
    \varrho(C) = 12 \int_0^1\int_0^1 C(u,v) \,\de u \,\de v - 3;
\end{align}
see e.g.~\cite[Theorem 5.1.6]{Nelsen-2006}.

\begin{corollary}[Spearman's rho]\label{cor_Spearman}
Assume that \(\pi\) has Lipschitz kernels and that $y \mapsto F_Y(y)$ is Lipschitz continuous for $Y\sim \pi_2$.
    Then 
\begin{align*}
    \E[|\hat{\sR}^N_\varrho - \sR_\varrho(\pi)|] \le cN^{-1/3} \quad \text{and} \quad \E[|\hat{\sR}^{N,\#}_\varrho - \sR_\varrho(\pi)|] \le cN^{-1/3}§
\end{align*}    
\end{corollary}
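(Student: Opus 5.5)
The plan is to derive the corollary directly from Theorem \ref{cor:rates_rear}. Theorem \ref{cor:rates_rear} needs three things about $\eta=\varrho$ on $\mathcal{C}^\uparrow$: properties \ref{axiom_eta1}--\ref{axiom_eta3}, and the Lipschitz bound \eqref{eq:Lipschitz_copula} with respect to $d_1$. Once these are in place, both rate statements follow from the two parts of that theorem.

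\textbf{Axioms.} By Lemma \ref{lem_char_incRearr}\,\ref{lem_char_incRearr3}, every $C\in\mathcal{C}^\uparrow$ satisfies $C^\perp\le C\le C^+$ pointwise. Hence
\[
0=\varrho(C^\perp)\le\varrho(C)\le\varrho(C^+)=1 ,
\]
which gives \ref{axiom_eta1}.

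For \ref{axiom_eta2}, suppose $\varrho(C)=0$. Then $\int_{[0,1]^2}(C-C^\perp)\,\de u\,\de v=0$ with a nonnegative, continuous integrand, so $C=C^\perp$. The argument for \ref{axiom_eta3} is the same, using $C^+-C\ge 0$. These facts are also recorded in \cite{strothmann2022}, so I would cite that reference and only sketch the argument above.

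\textbf{Lipschitz property.} This is immediate from \eqref{eq:l1_d1}:
\[
|\varrho(C)-\varrho(\tilde C)|\le 12\int_{[0,1]^2}|C-\tilde C|\,\de u\,\de v=12\|C-\tilde C\|_1\le 12\, d_1(C,\tilde C).
\]
So \eqref{eq:Lipschitz_copula} holds with $L=12$, in fact on all copulas and not only on SI ones.

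\textbf{Applying the theorem.} The first part of Theorem \ref{cor:rates_rear} assumes $\pi\in\cP_c([0,1]^2)$ with Lipschitz kernels and Lipschitz $F_Y$. This is exactly the corollary's hypothesis, and it gives the rate for $\hat{\sR}^N_\varrho$.

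The second part needs $\bar\pi=(F_X,F_Y)_\#\pi$ to have Lipschitz kernels. I expect this transfer to be the only real obstacle. The conditional law of $\bar\pi$ is $\bar\pi_u=(F_Y)_\#\pi_{F_X^{-1}(u)}$. Since $F_Y$ is Lipschitz, $\mathcal{W}(\bar\pi_u,\bar\pi_{u'})\le c\,\mathcal{W}(\pi_{F_X^{-1}(u)},\pi_{F_X^{-1}(u')})\le c\,|F_X^{-1}(u)-F_X^{-1}(u')|$. Bounding this by $c|u-u'|$ requires $F_X^{-1}$ to be Lipschitz, i.e. the density of $X$ bounded below. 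Two ways to handle this:
\begin{itemize}
\item Read the corollary's hypothesis ``$\pi$ has Lipschitz kernels'' for the checkerboard part as referring to $\bar\pi$. This is natural, because the checkerboard estimator is rank-based and invariant under marginal transformations, so one may assume $\pi\in\Pi(\cU,\cU)$ from the start and then $\bar\pi=\pi$.
\item Otherwise, add a mild assumption that $F_X^{-1}$ is Lipschitz.
\end{itemize}
With either choice, Theorem \ref{cor:rates_rear} yields $\E[|\hat{\sR}^{N,\#}_\varrho-\sR_\varrho(\pi)|]\le cN^{-1/3}$.
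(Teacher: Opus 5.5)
Your proposal is correct and follows the paper's proof. The paper likewise uses $|\varrho(C)-\varrho(\tilde C)|\le 12\|C-\tilde C\|_1\le 12\, d_1(C,\tilde C)$ from \eqref{eq:l1_d1} and then invokes Theorem \ref{cor:rates_rear}, taking \ref{axiom_eta1}--\ref{axiom_eta3} for $\varrho$ from \cite{strothmann2022} rather than verifying them as you do. Your caveat about the checkerboard part is legitimate: the paper's one-line proof tacitly reads the hypothesis as Lipschitz kernels for $\bar\pi=(F_X,F_Y)_\#\pi$ (automatic when $\pi\in\Pi(\cU,\cU)$), and your rank-invariance argument is the right justification for that reading.
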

\begin{proof}
For any two copulas $C, \tilde C$ we have
\begin{align*}
|\varrho(C) - \varrho(\tilde C) | &= 12\,  \Big| \int_{[0,1]^2} \big[C(u,v) - \tilde C(u,v) \big]\, \de u \,\de v \Big| \le 12 \|C-\tilde C\|_1 
\stackrel{\eqref{eq:l1_d1}}{\le} 12 d_1(C, \tilde C).
\end{align*}
Now the result follows from Theorem \ref{cor:rates_rear}.
\end{proof}
\begin{figure}[t]
    \centering
    \includegraphics[width=0.95\linewidth]{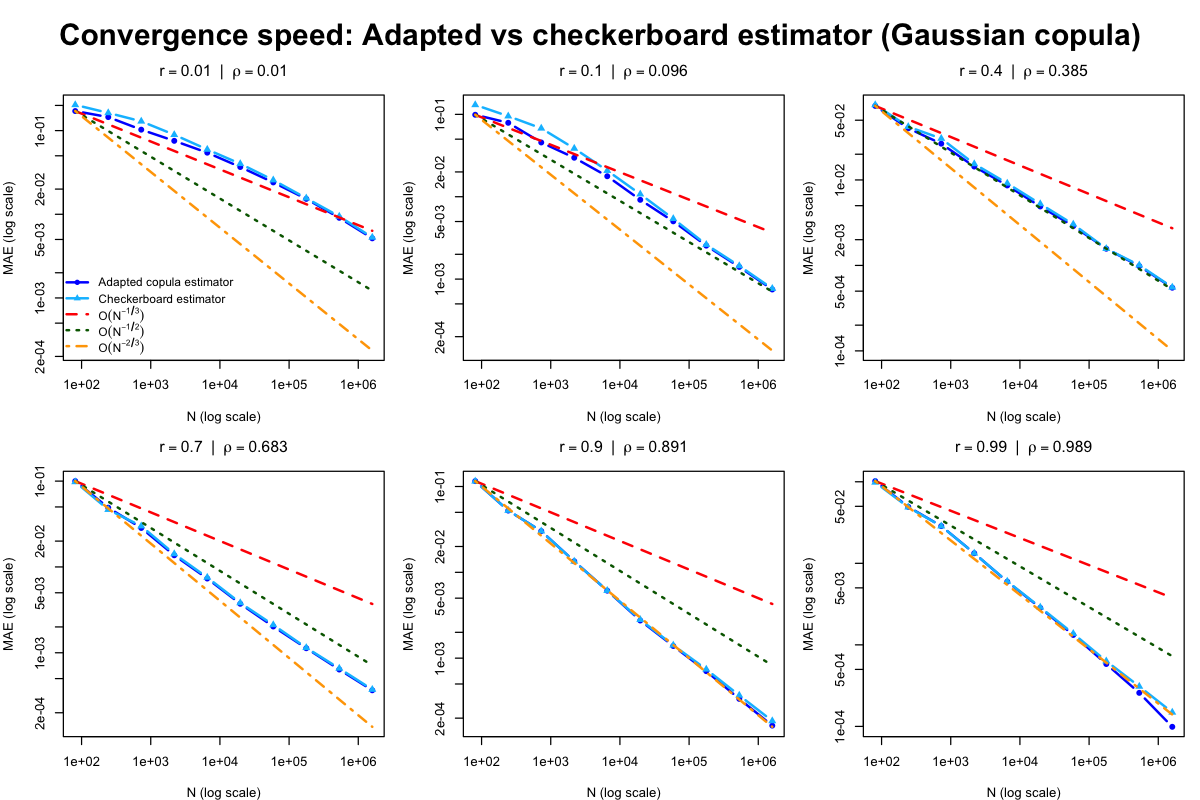}
    \caption{Average rate of convergence of the adapted copula-based estimator \(\hat{\sR}_\varrho = \varrho(\hat{C}_N^\uparrow)\) (dark blue) and the checkerboard copula-based estimator \(\hat{\sR}^\#_\varrho = \varrho((\hat{C}_N^\#)^\uparrow)\) (light blue) to \(\sR_\varrho(\pi) = 6/\pi \arcsin(r/2)\) for samples of size \(N \in \{3^4,3^5,\ldots,3^{13}\}\) from a Gaussian copula \(C_\pi\) with parameter \(r \in \{0.01, 0.1, 0.4, 0.7, 0.9, 0.99\}\). Errors are based on \(500\) runs.
    }
    \label{fig:rates_1}
\end{figure}

\begin{figure}[t]
    \centering
    \includegraphics[width=0.95\linewidth]{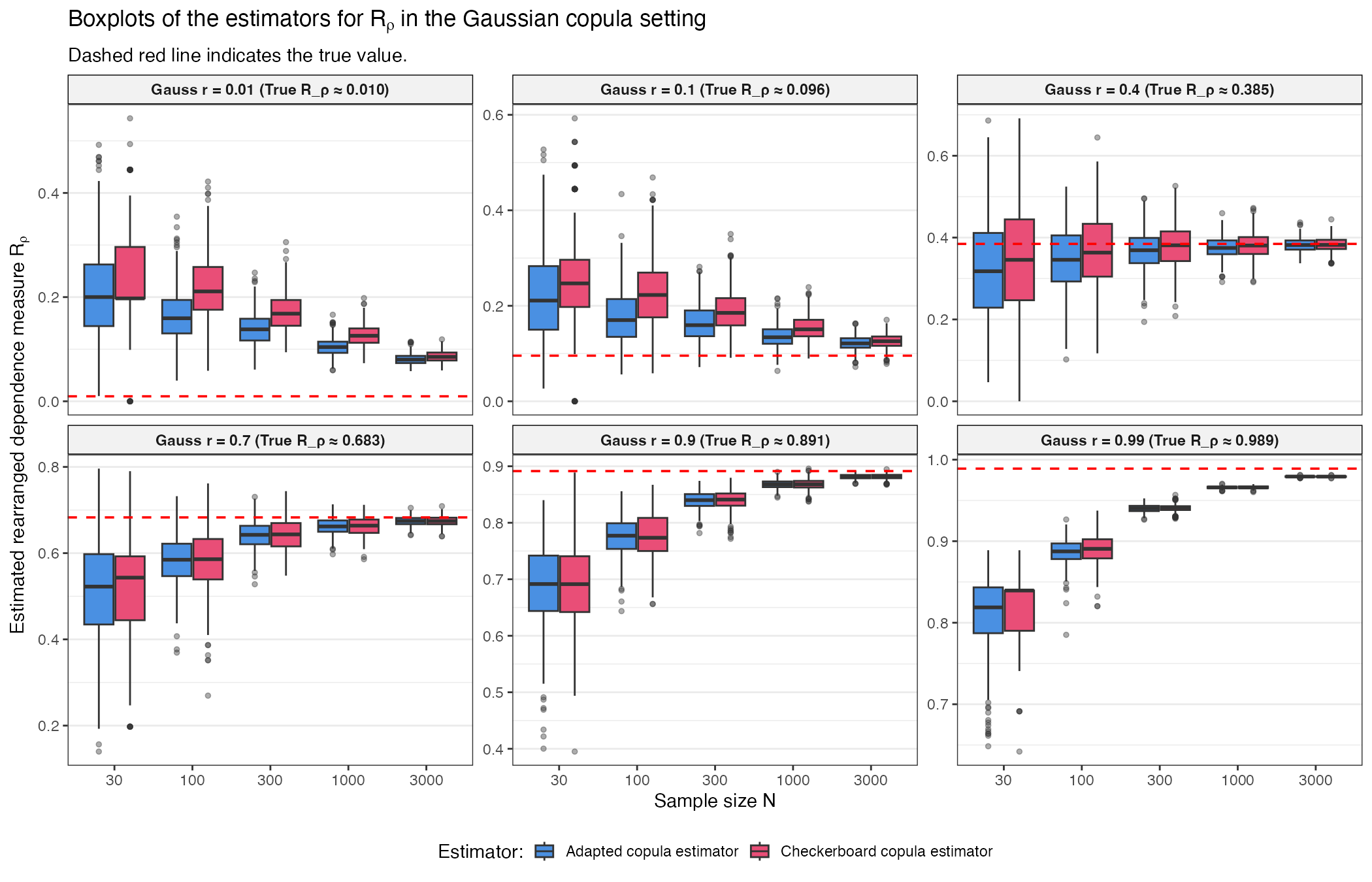}
    \caption{Comparison of the adapted copula-based estimator \(\hat{\sR}_\varrho = \varrho(\hat{C}_N^\uparrow)\) and the checkerboard copula-based estimator \(\hat{\sR}^\#_\varrho = \varrho((\hat{C}_N^\#)^\uparrow)\) for \(\sR_\varrho(\pi)\) in the Gaussian copula setting (i.e. \(C_\pi\) is a Gaussian copula) with parameter \(r\in \{0.01, 0.1, 0.4, 0.7, 0.9, 0.99\}\) for sample sizes \(N\in \{30,100,300,1000,3000\}\). Each boxplot is based on \(500\) runs.
    }
    
    \label{fig:boxplots_1}
\end{figure}
\begin{example}
    Figures~\ref{fig:rates_1} and~\ref{fig:boxplots_1} illustrate the rates of 
convergence for the estimators $\hat{\sR}_\varrho^N$ and 
$\hat{\sR}_\varrho^{N,\#}$ from Corollary~\ref{cor_Spearman} within 
the Gaussian copula setting. More precisely, $\pi$ has a Gaussian 
copula with parameter $r$, for which the rearranged dependence measure admits the 
closed-form expression $\sR_\varrho(\pi) = 6/\pi \arcsin(r/2)$. Figure~\ref{fig:rates_2} illustrates 
the rates of convergence for copulas of $U$-shaped data. In this second 
setting, we assume that $(X,Y)\sim \pi$ follows the model $Y = X^2 + r\varepsilon$, 
where $X \sim \mathcal{U}^{[-1,1]}$ is independent of $\varepsilon \sim \mathcal{N}(0,1)$. 
In both settings, the adapted and the checkerboard copula-based estimators 
perform very similarly, exhibiting convergence rates of at least $O(N^{-1/3})$ 
and often even faster, depending on the strength of functional dependence. We conjecture, that in general the rates vary between \(N^{-1/3}\) and \(N^{-2/3}\), where \(N^{-1/3}\) is optimal for independence, and \(N^{-2/3}\) for complete dependence. We leave this for future research.
\end{example}

Our second example is the \emph{Schweizer-Wolff measure}  \(\sigma_p\), which is defined as
\begin{align}
    \sigma_p(C) := \frac{\lVert C - C^\perp \rVert_p}{\lVert C^+ - C^\perp \rVert_p}
\end{align}
for $p\ge 1$; see \cite{Schweizer-Wolff-1981}. Here, \(\lVert\cdot\rVert_p\) denotes the \(L^p\)-norm with respect to the Lebesgue measure on \(\R^2\). As shown in \cite[Example 2.6]{strothmann2022}, \(\sigma_p\) is a measure of association that satisfies properties \ref{axiom_eta1}--\ref{axiom_eta4}.

\begin{figure}[t]
    \centering
    \includegraphics[width=0.95\linewidth]{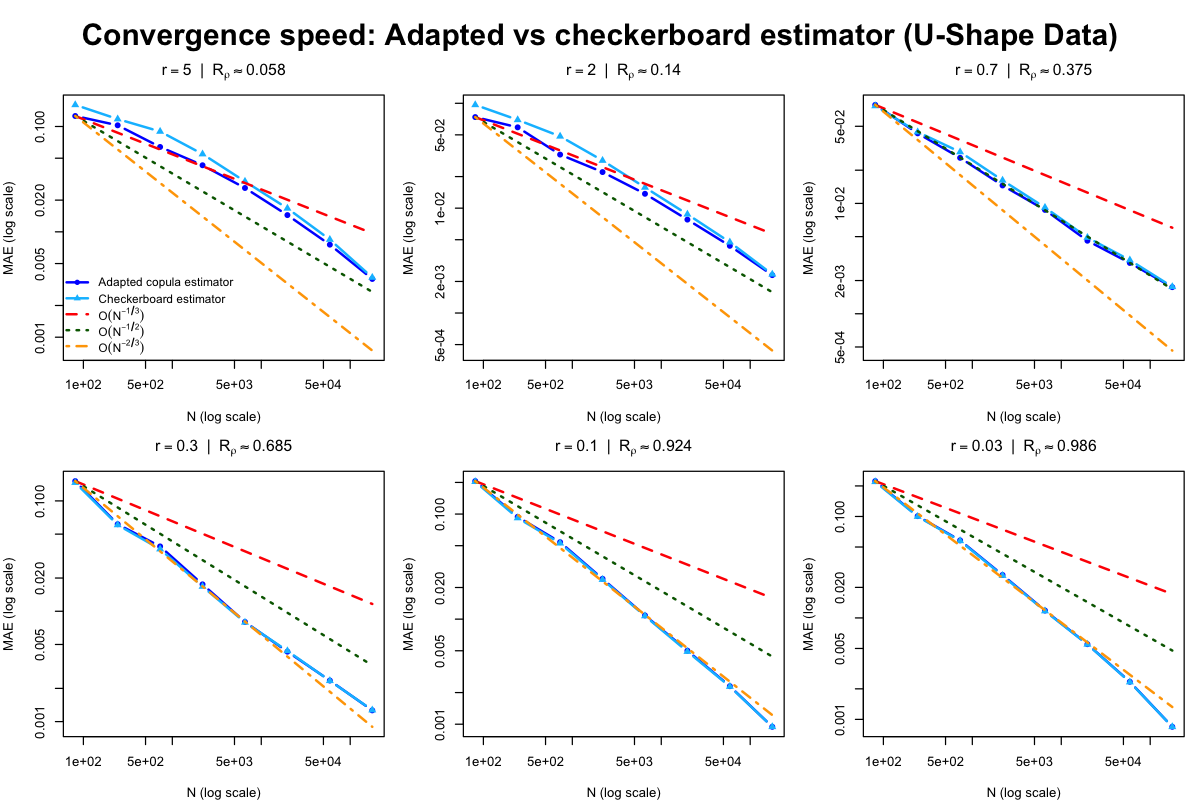}
    \caption{Average rate of convergence of the adapted copula-based estimator \(\hat{\sR}_\varrho = \varrho(\hat{C}_N^\uparrow)\) (dark blue) and the checkerboard copula-based estimator \(\hat{\sR}^\#_\varrho = \varrho((\hat{C}_N^\#)^\uparrow)\) (light blue) for \(\sR_\varrho(\pi)\) for samples of size \(N \in \{3^4,3^5,\ldots,3^{11}\}\) from the copula \(C_\pi\) where \((X,Y)\sim \pi\) with \(Y = X^2 + r\varepsilon\) has \(U\)-shape: \(X\) is uniform on \([-1,1]\) and independent of standard normal \(\varepsilon\), and  \(r \in \{5,2,0.7,0.3,0.1,0.03\}\). The approximate value of \(\sR_\varrho(\pi)\) is based on a sample of size \(1.000.000\).
    }
    \label{fig:rates_2}
\end{figure}

\begin{corollary}[Schweizer-Wolff measure]\label{cor_schweizer_wolff}
Assume that \(\pi\) has Lipschitz kernels and $y \mapsto F_Y(y)$ is Lipschitz continuous for $Y\sim \pi_2$.
Then $$\E[|\hat{\sR}^N_{\sigma_p} - \sR_{\sigma_p}(\pi)|] \le c N^{-1/(3p)}.$$
\end{corollary}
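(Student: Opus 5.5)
The plan is to reduce the claim to Theorem \ref{cor:rates_rear}'s proof strategy. The difference is that $\sigma_p$ is not Lipschitz in $d_1$; instead I will show it is Hölder continuous of order $1/p$ with respect to $d_1$ on $\cC^\uparrow$.

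\textbf{Step 1: reverse triangle inequality.} For any two copulas $C,\tilde C$,
\begin{align*}
|\sigma_p(C)-\sigma_p(\tilde C)| \le \frac{\big|\lVert C-C^\perp\rVert_p - \lVert \tilde C - C^\perp\rVert_p\big|}{\lVert C^+-C^\perp\rVert_p} \le \frac{\lVert C-\tilde C\rVert_p}{\lVert C^+-C^\perp\rVert_p}.
\end{align*}
The denominator is a fixed positive constant.

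\textbf{Step 2: interpolation.} Since copulas take values in $[0,1]$, we have $|C-\tilde C|\le 1$ pointwise. Hence
\begin{align*}
\lVert C-\tilde C\rVert_p^p=\int_{[0,1]^2}|C-\tilde C|^p\,\de u\,\de v\le \lVert C-\tilde C\rVert_1 .
\end{align*}
Combining this with \eqref{eq:l1_d1} gives $\lVert C-\tilde C\rVert_p\le d_1(C,\tilde C)^{1/p}$. Therefore
\begin{align*}
|\sigma_p(C)-\sigma_p(\tilde C)|\le c\, d_1(C,\tilde C)^{1/p}.
\end{align*}

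\textbf{Step 3: plug in the estimator.} Apply this bound with $C=\hat C_N^\uparrow$ and $\tilde C=C_\pi^\uparrow$, recalling that $\sR_{\sigma_p}(\pi)=\sigma_p(C_\pi^\uparrow)$. By Corollary \ref{cor_incrRear}, applied to the $\Pi(\cU,\cU)$ representatives of the copulas via Lemma \ref{lem:commute}, we have $d_1(\hat C_N^\uparrow,C_\pi^\uparrow)\le d_1(\hat C_N,C_\pi)$. Since $t\mapsto t^{1/p}$ is concave for $p\ge1$, Jensen's inequality gives
\begin{align*}
\E\big[|\hat{\sR}^N_{\sigma_p}-\sR_{\sigma_p}(\pi)|\big]\le c\,\E\big[d_1(\hat C_N,C_\pi)\big]^{1/p}\le c\,(N^{-1/3})^{1/p}=cN^{-1/(3p)}.
\end{align*}
The last inequality is Corollary \ref{thm_rates_C_N}, which applies because of the Lipschitz-kernel assumption and the Lipschitz continuity of $F_Y$.

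\textbf{Main obstacle.} The only delicate point is the application of Corollary \ref{cor_incrRear}. That corollary is formulated for couplings in $\Pi(\cU,\cU)$, whereas $\hat\pi_c^N\notin\Pi(\cU,\cU)$. To handle this, I pass to the copula-induced couplings $\pi_{\hat C_N}$ and $\pi_{C_\pi}$, both of which lie in $\Pi(\cU,\cU)$. Lemma \ref{lem:commute} ensures that rearranging commutes with taking copulas, which justifies this identification. Everything else is routine.
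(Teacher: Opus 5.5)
Your proposal is correct and follows essentially the same argument as the paper. It uses the reverse triangle inequality, the bound $\lVert C-\tilde C\rVert_p\le \lVert C-\tilde C\rVert_1^{1/p}\le d_1(C,\tilde C)^{1/p}$ based on $0\le C,\tilde C\le 1$, and then Corollaries \ref{thm_rates_C_N} and \ref{cor_incrRear} together with Jensen's inequality. Your step of passing to the copula-induced couplings in $\Pi(\cU,\cU)$ before applying Corollary \ref{cor_incrRear} spells out a point the paper leaves implicit.
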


\begin{proof}
For any two copulas $C, \tilde C$ we have 
\begin{align}
    \big|\big\|C-C^\perp\|_p -\|\tilde C- C^\perp\|_p\big|\big| \le \|C-\tilde C\|_p\le (\|C-\tilde C\|_1)^{1/p}\stackrel{\eqref{eq:l1_d1}}{\le} d_1(C, \tilde C)^{1/p},
\end{align}
where we use for the second inequality that $0\le C(u,v), \tilde C(u,v) \le 1$ for all $u,v\in [0,1]^2$. The claim now follows from Corollaries \ref{thm_rates_C_N}, \ref{cor_incrRear} and Jensen's inequality.
\end{proof}

%\textcolor{red}{Should we add a remark, that we are not sure about Kendall's tau but think it's unlikely to hold using the same methodolgy? Or should we not mention it at all?}

\section*{Acknowledgement}

The first author was funded in whole by the Austrian Science Fund (FWF) 
{[10.55776/PAT1669224]} project \emph{SORT: Stochastic orders for functional dependence}. The second author would like to thank NNF and Villum Fonden for their support.

\bibliographystyle{apalike} % Style BST file (imsart-number.bst or imsart-nameyear.bst)
\bibliography{ConditionalWassersteinDistance.bib}

\appendix
\section{Auxiliary results}

\begin{lemma}[{\cite[Corollary 4.2.3]{fdsempi2016}}] \label{lem:polya}
Let $\pi^n,\pi \in \mathcal{P}(\R^{k}\times \R^l)$ have cdfs denoted by $H^n,H$. If $(\pi^n)$ converges weakly to $\pi$ and $H$ is continuous, then $\|H^n-H\|_\infty \to 0$.
Furthermore, if $k=l=1$ and $H^n$ are continuous cdfs, then also the induced copulas  $(C_{\pi^n})$ converge to the induced copula $C_{\pi}$ uniformly.
\end{lemma}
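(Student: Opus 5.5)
The first claim, uniform convergence of the cdfs $H^n\to H$, is the classical P\'olya argument. I would prove it in two steps. First, weak convergence together with continuity of $H$ gives pointwise convergence: every point is a continuity point of $H$, so the boundary of each orthant $(-\infty,z]$ is $\pi$-null and the portmanteau theorem applies. Second, I upgrade pointwise to uniform convergence through a finite grid. Fix $\varepsilon>0$ and choose $R$ so large that every marginal of $\pi$ puts mass at most $\varepsilon$ outside $[-R,R]$. The marginal cdfs of $\pi^n$ converge pointwise at $\pm R$ by the first step, so the same bound holds, up to $2\varepsilon$, for all large $n$.

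On the compact box $[-R,R]^{k+l}$, the function $H$ is uniformly continuous. I then choose a finite grid $G$ such that any two neighbouring grid points $g\le g'$ satisfy $H(g')-H(g)\le\varepsilon$, and I add the points at $\pm R$ in each coordinate. Take any $z$ and sandwich it between grid points $g\le z\le g'$, using the coordinate values $\pm R$ when $z$ leaves the box. Monotonicity of cdfs then gives
\[
H^n(g)-H(g')\le H^n(z)-H(z)\le H^n(g')-H(g).
\]
Each side differs from $\pm\bigl(H^n(g)-H(g)\bigr)$ or $\pm\bigl(H^n(g')-H(g')\bigr)$ by at most $c\varepsilon$. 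Points outside the box are handled with the tail bound, since there $H^n$ and $H$ are close to a lower-dimensional marginal cdf or to $0$ or $1$, up to $c\varepsilon$. Finitely many grid points converge, so $\limsup_n\|H^n-H\|_\infty\le c\varepsilon$.

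For the copula statement with $k=l=1$, I use Sklar's theorem \eqref{eq:sklar} with continuous marginals. It gives $C_{\pi^n}(u,v)=H^n\bigl(F_{X^n}^{-1}(u),F_{Y^n}^{-1}(v)\bigr)$ for $u,v\in(0,1)$, and the same for $\pi$. I expand
\[
|C_{\pi^n}(u,v)-C_\pi(u,v)|\le \|H^n-H\|_\infty+\bigl|H(F_{X^n}^{-1}(u),F_{Y^n}^{-1}(v))-C_\pi(u,v)\bigr|.
\]
The second term equals $\bigl|C_\pi\bigl(F_X(F_{X^n}^{-1}(u)),F_Y(F_{Y^n}^{-1}(v))\bigr)-C_\pi(u,v)\bigr|$. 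By the Lipschitz property of copulas this is at most $|F_X(F_{X^n}^{-1}(u))-u|+|F_Y(F_{Y^n}^{-1}(v))-v|$. Since $F_{X^n}$ is continuous, $F_{X^n}(F_{X^n}^{-1}(u))=u$, so
\[
|F_X(F_{X^n}^{-1}(u))-u|=|F_X(F_{X^n}^{-1}(u))-F_{X^n}(F_{X^n}^{-1}(u))|\le\|F_{X^n}-F_X\|_\infty .
\]
That quantity tends to zero by the first part applied to the marginals. The same holds for $Y$, which gives the uniform convergence of the copulas.

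The main obstacle is the uniform bound in the first part when the support is unbounded in several dimensions. The tail truncation has to be done carefully, so that $H^n(z)$ is controlled when some coordinates of $z$ are large and others are not. I would handle this by reducing to the projected cdf, using the bound $|H^n(z)-H^n(z\wedge R)|\le\sum_i\pi^n_i\bigl((R,\infty)\bigr)$.
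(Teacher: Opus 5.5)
Your proof is correct. The paper gives no argument of its own for this lemma; it simply imports it from \cite[Corollary 4.2.3]{fdsempi2016}. Your write-up is a self-contained version of the standard argument behind that reference: portmanteau for pointwise convergence, then a finite grid on $[-R,R]^{k+l}$ plus marginal tail control to upgrade to uniform convergence, then Sklar's representation $C_{\pi^n}(u,v)=H^n(F_{X^n}^{-1}(u),F_{Y^n}^{-1}(v))$ together with the $1$-Lipschitz property of copulas.

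Two small points are worth making explicit.
\begin{itemize}
\item You use that continuity of a joint cdf forces continuity of its marginals. If $\pi_i$ had an atom at $a$, then $H$ would jump across $\{z_i=a\}$ once the other coordinates are large. You need this both for the convergence of the marginal cdfs at $\pm R$ and for the identity $F_{X^n}(F_{X^n}^{-1}(u))=u$.
\item You need not rerun the first part on the marginals. Letting $y\to\infty$ gives $\|F_{X^n}-F_X\|_\infty\le\|H^n-H\|_\infty$, and likewise for $Y$. Your decomposition therefore yields the quantitative bound $\|C_{\pi^n}-C_\pi\|_\infty\le 3\|H^n-H\|_\infty$, which is slightly more than the qualitative statement the paper cites.
\end{itemize}
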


\begin{lemma}[{\cite[Remark 2.19\,(iii)]{Villani-2003}}]\label{lem:L1_integral}
    For distribution functions \(F\) and \(G\) on \(\R\), it is
    \begin{align*}%\label{eq_lem_L1_integral}
        \int_0^1 \left|F^{-1}(u) - G^{-1}(u)\right| \de u = \int_\R \left| F(x) - G(x) \right|\de x.
    \end{align*}
\end{lemma}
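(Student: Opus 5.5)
The statement to prove is the identity $\int_0^1 |F^{-1}(u)-G^{-1}(u)|\,\de u=\int_\R |F(x)-G(x)|\,\de x$ of Lemma \ref{lem:L1_integral}. I would prove it as a layer-cake computation: write both sides as the Lebesgue measure of the same planar set, once sliced horizontally and once vertically. The key object is the region between the graphs of $F$ and $G$,
\begin{align*}
A:=\{(x,u)\in \R\times(0,1):\ \min\{F(x),G(x)\}< u\le \max\{F(x),G(x)\}\}.
\end{align*}
Slicing at fixed $x$ gives a vertical section of length $|F(x)-G(x)|$. By Tonelli, the Lebesgue measure of $A$ is therefore the right-hand side.

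For the left-hand side I would use the Galois relation for the left-continuous quantile function already invoked in Lemma \ref{lem:estimate}: for $u\in(0,1)$ and $x\in\R$, $F(x)\ge u$ if and only if $x\ge F^{-1}(u)$. Hence $F(x)<u\le G(x)$ holds if and only if $G^{-1}(u)\le x<F^{-1}(u)$. Symmetrically, $G(x)<u\le F(x)$ holds if and only if $F^{-1}(u)\le x<G^{-1}(u)$. So for each fixed $u\in(0,1)$, the horizontal section of $A$ is the half-open interval between $F^{-1}(u)$ and $G^{-1}(u)$, of length $|F^{-1}(u)-G^{-1}(u)|$. A second application of Tonelli then shows that the measure of $A$ equals the left-hand side.

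Measurability of $A$ is immediate, since $F$ and $G$ are Borel functions and $A$ is defined by inequalities between Borel functions of $(x,u)$. Both sides may equal $+\infty$, for instance without first moments, and the identity still holds in $[0,\infty]$ because Tonelli applies to nonnegative integrands. The one step that needs care is the case-by-case verification of the section identity. Writing the union of the two disjoint events $\{F<u\le G\}$ and $\{G<u\le F\}$ as a single interval in $x$ depends on the equivalence $F(x)\ge u \Leftrightarrow x\ge F^{-1}(u)$, which holds precisely because $F^{-1}$ is taken left-continuous and $F$ right-continuous. If $F^{-1}(u)$ and $G^{-1}(u)$ coincide, both events are empty and the section has length $0$, as required.
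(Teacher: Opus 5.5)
Your proof is correct and complete. For $u\in(0,1)$, the quantile $F^{-1}(u)=\inf\{x:F(x)\ge u\}$ is finite and $\{F\ge u\}=[F^{-1}(u),\infty)$ by right-continuity, so each horizontal section of $A$ is exactly $[G^{-1}(u),F^{-1}(u))$ or $[F^{-1}(u),G^{-1}(u))$. Each vertical section has length $|F(x)-G(x)|$, since intersecting with $(0,1)$ removes at most the point $u=1$, and Tonelli then gives the identity in $[0,\infty]$. The paper does not prove this lemma; it only cites Villani (Remark 2.19\,(iii)). Your Fubini computation of the area between the graphs of $F$ and $G$ is the standard justification of that cited fact.
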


\begin{lemma}\label{lem:KR_estimate_y}
    Let $\pi \in \mathcal{P}([0,1]^2)$, let $f:[0,1]\to [0,1]$ and define $\tilde\pi=(x,f(y))_{\#}\pi$. Then 
    \begin{align*}
        \mathcal{KR}(\pi, \tilde \pi) \le \sup_{x\in [0,1]} |f(x)-x|.
    \end{align*}
\end{lemma}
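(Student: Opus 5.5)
The plan is to write down the Knothe--Rosenblatt coupling of $\pi$ and $\tilde\pi$ explicitly and observe that the first-coordinate term vanishes. Let $(X,Y)\sim\pi$ and set $\widetilde X:=X$, $\widetilde Y:=f(Y)$, so that $(\widetilde X,\widetilde Y)\sim\tilde\pi$. Both laws have the same first marginal, hence $F_X^{-1}=F_{\widetilde X}^{-1}$. The first integral in \eqref{eq:KR} is therefore zero, and
\begin{align*}
\mathcal{KR}(\pi,\tilde\pi)=\int_0^1 \mathcal{W}\big(\pi_{F_X^{-1}(u)},\tilde\pi_{F_X^{-1}(u)}\big)\,\de u .
\end{align*}
Here I use the expression of the inner $\de v$-integral as a one-dimensional Wasserstein distance between the conditional laws, which is the quantile representation in \eqref{eq:AW_simple}.

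The next step is to identify the kernel of $\tilde\pi$. Since $\tilde\pi=(x,f(y))_\#\pi$, the disintegration \eqref{eq:disintegration} gives $\tilde\pi(A\times B)=\int_A \pi_x(f^{-1}(B))\,\pi_1(\de x)$. Thus $\tilde\pi_x=f_\#\pi_x$ is a valid version of the kernel, with equality for $\pi_1$-a.e.\ $x$. Since $u\mapsto F_X^{-1}(u)$ pushes $\mathcal U$ to $\pi_1$, changing the kernel on a $\pi_1$-null set does not affect the integral. This leaves us to bound $\mathcal{W}(\pi_x,f_\#\pi_x)$ for each fixed $x$.

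For fixed $x$, take the coupling $(\mathrm{id},f)_\#\pi_x\in\Pi(\pi_x,f_\#\pi_x)$. This gives
\begin{align*}
\mathcal{W}(\pi_x,f_\#\pi_x)\le \int |y-f(y)|\,\pi_x(\de y)\le \sup_{y\in[0,1]}|f(y)-y| .
\end{align*}
Integrating over $u$ then yields the claim.

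The main obstacle is the measurability of $f$, which is implicit in the statement. I will assume $f$ is Borel so that the pushforwards make sense. A second point is that \eqref{eq:KR} uses the quantile coupling, so the kernel version evaluated along $F_X^{-1}(u)$ matters. The a.e.\ argument above handles this, because the Knothe--Rosenblatt value does not depend on the choice of kernel version up to $\pi_1$-null sets.
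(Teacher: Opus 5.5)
Your proof is correct and follows the same route as the paper. Equal first marginals make the first term of $\mathcal{KR}$ vanish, the kernel of $\tilde\pi$ is $f_\#\pi_x$, and the coupling $(\mathrm{id},f)_\#\pi_x$ bounds each conditional Wasserstein distance by $\sup|f(y)-y|$. Your extra remarks are details the paper leaves implicit: Borel measurability of $f$, and the fact that the kernel version only matters up to null sets of $(F_X^{-1})_\#\mathcal{U}=\pi_1$.
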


\begin{proof}
    Let $(X,Y)\sim \pi$ and $(\widetilde X, \widetilde Y)\sim \tilde \pi$. By definition we have $X\stackrel{d}{=}\widetilde X$ and $(\widetilde Y| \widetilde X=x)\stackrel{d}{=} (f(Y)|X=x)$. Thus we note that
    \begin{align}\label{eq:KR_split}
    \begin{split}
         \mathcal{KR}(\pi, \tilde\pi) &= \int_0^1 |F_X^{-1}(u) - F_{\widetilde X}^{-1}(u)| + \int_0^1 |F^{-1}_{Y|X=F_{X}^{-1}(u)}(v)-F^{-1}_{\widetilde Y|\widetilde X= F_{\widetilde X}^{-1}(u)}(v)|\,\de v \de u\\
         &= \int_{[0,1]^2} |F^{-1}_{Y|X=F_{X}^{-1}(u)}(v)-F^{-1}_{\widetilde Y|\widetilde X= F_{X}^{-1}(u)}(v)|\,\de v \de u\\
         &= \int_0^1 \mathcal{W}(\pi_{F_X^{-1}(u)}, \tilde \pi_{F_{X}^{-1}(u)})\,\de u\\
         &= \int_0^1 \mathcal{W}(\pi_{F_X^{-1}(u)},f_{\#}\pi_{F_{ X}^{-1}(u)})\,\de u\\
         &\le \sup_{x\in [0,1]} |f(x)-x|.
    \end{split}
    \end{align}
This shows the claim.
\end{proof}

\begin{lemma}\label{lem:general_marginals}
Let $g, \tilde g :\R\to \R$ be Borel measurable, and assume that there exists a modulus of continuity $\omega_g$ for $g$ such that 
\begin{align*}
   |g(y)- g(\tilde y)|\le \omega_g(|y-\tilde y|)
\end{align*}
for all $y,\tilde y\in \R$. Then for $\pi, \tilde \pi \in \mathcal{P}(\R\times \R)$ with continuous first marginals we have for $X\sim \pi_1, \widetilde X\sim \tilde \pi_1$,
\begin{align}\label{eq:kr_app1}
\begin{split}
    \mathcal{KR}((F_X,g)_{\#}\pi, (F_{\widetilde X}, \tilde g)_{\#} \tilde\pi) &\le  \int_0^1\inf_{\gamma_u \in  \Pi( \pi_{F_X^{-1}(u)},  \tilde \pi_{F_{\tilde X}^{-1}(u)}) } \int \omega_g( |y-\tilde y|)\,\gamma_u(\de y, \de \tilde y)\, \de u +\|g- \tilde g\|_\infty.
\end{split}
\end{align}
If furthermore $\omega_g$ is concave, then 
\begin{align}\label{eq:kr_app2}
\begin{split}
    \mathcal{KR}((F_X,g)_{\#}\pi, (F_{\widetilde  X}, \tilde g)_{\#} \tilde\pi) &\le \omega_g  \Big( \int_0^1 \mathcal{W}\big( \pi_{F_X^{-1}(u)},  \tilde \pi_{F_{\tilde X}^{-1}(u)}  \big)\,\de u\Big) + \|g- \tilde g\|_\infty\\
    &\le \omega_g\big(\mathcal{KR}(\pi, \tilde \pi)\big) + \|g- \tilde g\|_\infty.
\end{split}
\end{align}
\end{lemma}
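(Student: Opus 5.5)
The plan is to work directly with the Knothe--Rosenblatt representation \eqref{eq:KR} and to split off the effect of the first and the second coordinate map. Set $\mu:=(F_X,g)_{\#}\pi$ and $\tilde\mu:=(F_{\widetilde X},\tilde g)_{\#}\tilde\pi$.

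\emph{First marginals.} Since $\pi_1$ and $\tilde\pi_1$ are continuous, $F_X(X)$ and $F_{\widetilde X}(\widetilde X)$ are both uniform. Hence the first summand of $\mathcal{KR}(\mu,\tilde\mu)$ vanishes, and both quantile functions of the first marginals are the identity on $(0,1)$.

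\emph{Conditional laws.} Because $F_X$ is a measurable function of $X$ that is injective $\pi_1$-a.s. up to flat pieces (which carry no mass), the conditional law of $g(Y)$ given $F_X(X)=u$ equals $g_{\#}\pi_{F_X^{-1}(u)}$ for a.e.\ $u$. The analogous statement holds for $\tilde\mu$ with $\tilde g$ and $\tilde\pi_{F_{\widetilde X}^{-1}(u)}$. Therefore
\begin{align*}
\mathcal{KR}(\mu,\tilde\mu)=\int_0^1 \mathcal{W}\big(g_{\#}\pi_{F_X^{-1}(u)},\,\tilde g_{\#}\tilde\pi_{F_{\widetilde X}^{-1}(u)}\big)\,\de u .
\end{align*}
This identification is the only delicate point. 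I would justify it by checking the disintegration identity $\int_A \mu_u(B)\,\de u=\mu(A\times B)$ via the change of variables $u=F_X(x)$, using that $F_X^{-1}(F_X(X))=X$ almost surely.

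\emph{Splitting the integrand.} For fixed $u$, apply the triangle inequality:
\begin{align*}
\mathcal{W}\big(g_{\#}\pi_{F_X^{-1}(u)},\tilde g_{\#}\tilde\pi_{F_{\widetilde X}^{-1}(u)}\big)\le \mathcal{W}\big(g_{\#}\pi_{F_X^{-1}(u)},g_{\#}\tilde\pi_{F_{\widetilde X}^{-1}(u)}\big)+\mathcal{W}\big(g_{\#}\tilde\pi_{F_{\widetilde X}^{-1}(u)},\tilde g_{\#}\tilde\pi_{F_{\widetilde X}^{-1}(u)}\big).
\end{align*}
For the second term, use the coupling $(g,\tilde g)_{\#}\tilde\pi_{\cdot}$; it is bounded by $\|g-\tilde g\|_\infty$. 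For the first term, push any $\gamma_u\in\Pi(\pi_{F_X^{-1}(u)},\tilde\pi_{F_{\widetilde X}^{-1}(u)})$ forward under $(g,g)$. This gives the bound $\int|g(y)-g(\tilde y)|\,\de\gamma_u\le\int\omega_g(|y-\tilde y|)\,\de\gamma_u$, and taking the infimum over $\gamma_u$ and integrating in $u$ yields \eqref{eq:kr_app1}. A measurable selection issue appears only if one wants the infimum inside the integral to be measurable; I would sidestep it by bounding with an arbitrary measurable family of couplings, for instance the quantile couplings.

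\emph{Concave case.} If $\omega_g$ is concave, apply Jensen twice. For each $u$, take $\gamma_u$ to be the $\mathcal{W}$-optimal coupling (in one dimension this is the quantile coupling, which is measurable in $u$); then the inner integral is at most $\omega_g(\mathcal{W}(\pi_{F_X^{-1}(u)},\tilde\pi_{F_{\widetilde X}^{-1}(u)}))$. Jensen in $u$ then gives the first line of \eqref{eq:kr_app2}. The second line follows because $\omega_g$ is nondecreasing (a concave modulus with $\omega_g(0)\ge0$ can be taken nondecreasing) and because $\int_0^1\mathcal{W}(\cdots)\,\de u$ is exactly the second summand of $\mathcal{KR}(\pi,\tilde\pi)$, hence at most $\mathcal{KR}(\pi,\tilde\pi)$.
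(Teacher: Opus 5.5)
Your proposal is correct and follows the paper's proof essentially step for step. You identify the kernels of $(F_X,g)_{\#}\pi$ and $(F_{\widetilde X},\tilde g)_{\#}\tilde\pi$ as $g_{\#}\pi_{F_X^{-1}(u)}$ and $\tilde g_{\#}\tilde\pi_{F_{\widetilde X}^{-1}(u)}$, split with the triangle inequality, bound the $g$-versus-$\tilde g$ term by $\|g-\tilde g\|_\infty$ and the other term by pushing couplings through $(g,g)$, and apply Jensen for concave $\omega_g$. Your extra details fill in what the paper leaves implicit: the check via $F_X^{-1}(F_X(X))=X$ a.s., and monotonicity of $\omega_g$, which is automatic for a nonnegative concave function on $[0,\infty)$.

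One small caveat: your measurability ``sidestep'' via a fixed family such as the quantile couplings only yields a weaker bound than \eqref{eq:kr_app1}. Integrating the pointwise inequality directly (as an upper integral if need be) gives the stated bound.
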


\begin{proof}
    As $({F_X})_{\#}\pi_1= (F_{\tilde X})_{\#}\tilde \pi_1 =\mathcal{U}$ and $F_X, F_{\tilde X}$ are continuous, we obtain 
    \begin{align*}
        \big(F_X,g)_{\#}\pi\big)_u = g_{\#}\pi_{F_X^{-1}(u)}, \quad \big(F_{\tilde X},\tilde g)_{\#}\tilde \pi\big)_u =  \tilde g_{\#}\tilde \pi_{F_{\tilde X}^{-1}(u)}
    \end{align*}
    for $\mathcal{U}$-a.e.~$u\in [0,1].$
    Thus, from the definition
    \begin{align*}
        \mathcal{KR}((F_X,g)_{\#}\pi, (F_{\tilde X}, \tilde g)_{\#} \tilde\pi) &= \int_0^1 \mathcal{W}\big(g_{\#} \pi_{F_X^{-1}(u)}, \tilde g_{\#} \tilde \pi_{F_{\tilde X}^{-1}(u)}\big) \, \de u.
    \end{align*}
    For fixed $u\in [0,1]$,
    \begin{align*}
        \mathcal{W}\big(g_{\#} \pi_{F_X^{-1}(u)}, \tilde g_{\#} \tilde \pi_{ F_{\tilde X}^{-1}(u)}\big)
        &\le \mathcal{W}\big(g_{\#} \pi_{F_X^{-1}(u)},  g_{\#} \tilde \pi_{F_{\tilde X}^{-1}(u)}\big) + \mathcal{W}\big(g_{\#} \tilde \pi_{F_{\tilde X}^{-1}(u)}, \tilde g_{\#} \tilde  \pi_{F_{\tilde X}^{-1}(u)}\big)\\
        &\le \mathcal{W}\big(g_{\#} \pi_{F_X^{-1}(u)},  g_{\#} \tilde \pi_{F_{\tilde X}^{-1}(u)}\big) + \|g-\tilde g\|_\infty.
    \end{align*}
    and for any $\gamma_u\in \Pi\big( \pi_{F_X^{-1}(u)},  \tilde \pi_{F_{\tilde X}^{-1}(u)}\big)$ we have
    \begin{align*}
        \mathcal{W}\big(g_{\#} \pi_{F_X^{-1}(u)},  g_{\#} \tilde \pi_{F_{\tilde X}^{-1}(u)}\big) \le \int |g(y)- g(\tilde y)| \,\gamma_u(\de y, \de \tilde y) \le \int \omega_g( |y-\tilde y|) \, \gamma_u(\de y, \de \tilde y).
    \end{align*}
    Combining these estimates we obtain \eqref{eq:kr_app1}. Applying Jensen's inequality to the last integral, we obtain \eqref{eq:kr_app2}. 
\end{proof}

\section{Remaining proofs}

\begin{lemma} \label{lem: kpc}
For all $\pi, \tilde \pi\in \mathcal{P}(\mathcal{H}\times \mathcal{H})$ we have
    \begin{align*}
   \big| \mathrm{MMD}^2(\pi_x, \pi_2) - \mathrm{MMD}^2(\tilde \pi_{\tilde{x}}, \tilde \pi_2) \big| \le c \cdot [\mathcal{W}(\pi_x,\tilde \pi_{\tilde x}) + \mathcal{W}(\pi_2, \tilde \pi_2) \big]. 
   \end{align*}
\end{lemma}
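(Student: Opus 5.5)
The plan is to write MMD as a norm of mean embeddings and use the Lipschitz property of the feature map with respect to the Wasserstein distance. Concretely, with the bounded feature map $\phi$ and kernel $k(y,y')=\langle\phi(y),\phi(y')\rangle_{\mathcal H}$, define the mean embedding $m_\mu:=\int \phi(y)\,\mu(\de y)\in\mathcal H$. Then $\mathrm{MMD}(\mu,\nu)=\|m_\mu-m_\nu\|_{\mathcal H}$, since the supremum over the unit ball of a linear functional is the norm of its Riesz representer. Boundedness gives $\|m_\mu\|_{\mathcal H}\le \sup_y\|\phi(y)\|=:B$ for every $\mu$, and hence $\mathrm{MMD}(\mu,\nu)\le 2B$.

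First, factor the difference of squares:
\begin{align*}
\big|\mathrm{MMD}^2(\pi_x,\pi_2)-\mathrm{MMD}^2(\tilde\pi_{\tilde x},\tilde\pi_2)\big|
\le 4B\,\big|\mathrm{MMD}(\pi_x,\pi_2)-\mathrm{MMD}(\tilde\pi_{\tilde x},\tilde\pi_2)\big|.
\end{align*}
Next, the reverse triangle inequality for the norm in $\mathcal H$ bounds the last factor by
\begin{align*}
\|m_{\pi_x}-m_{\tilde\pi_{\tilde x}}\|_{\mathcal H}+\|m_{\pi_2}-m_{\tilde\pi_2}\|_{\mathcal H}
=\mathrm{MMD}(\pi_x,\tilde\pi_{\tilde x})+\mathrm{MMD}(\pi_2,\tilde\pi_2).
\end{align*}

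It then remains to show $\mathrm{MMD}(\mu,\nu)\le L\,\mathcal W(\mu,\nu)$. For any coupling $\gamma\in\Pi(\mu,\nu)$,
\begin{align*}
\|m_\mu-m_\nu\|_{\mathcal H}=\Big\|\int(\phi(y)-\phi(y'))\,\gamma(\de y,\de y')\Big\|_{\mathcal H}\le \int\|\phi(y)-\phi(y')\|_{\mathcal H}\,\gamma(\de y,\de y').
\end{align*}
Taking the infimum over $\gamma$ gives the bound provided $\phi$ is $L$-Lipschitz. Combining the three steps yields the claim with $c=4BL$.

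The main obstacle is this Lipschitz property of $\phi$, which the paper's setup does not state explicitly. Note that $\|\phi(y)-\phi(y')\|^2=k(y,y)+k(y',y')-2k(y,y')$, so it suffices that the kernel is, for instance, $C^2$ near the diagonal or Lipschitz in each argument (true for Gaussian or Laplace-type kernels; for the Laplace kernel one gets Hölder-$1/2$ only, so there one would need $C^{1,1}$-type smoothness). I would add this as a standing assumption, or read "bounded feature map" in the paper as bounded and Lipschitz. When $\mathcal X=\mathcal Y=\mathcal H$ and $\phi$ is literally the identity on a bounded set (the "$\langle\gamma,x\rangle$" form of the MMD definition), $\phi$ is $1$-Lipschitz and $B$ is the radius of the support, so everything is immediate.
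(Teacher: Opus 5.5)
Your argument is correct and follows the paper's proof. Like the paper, you write $\mathrm{MMD}$ as the $\mathcal{H}$-norm of a difference of mean embeddings, bound $|\mathrm{MMD}(\mu,\nu)-\mathrm{MMD}(\tilde\mu,\tilde\nu)|\le \mathcal{W}(\mu,\tilde\mu)+\mathcal{W}(\nu,\tilde\nu)$, and use boundedness of $\mathrm{MMD}$ to pass to squares. The paper cites that bound from \cite{han2026max}, while you prove it directly via a coupling and the triangle inequality for the Bochner integral. Since $\mathcal{W}(\pi_x,\tilde\pi_{\tilde x})$ in the statement is the Wasserstein distance on $\mathcal{P}(\mathcal{H})$ with the $\mathcal{H}$-norm, the relevant map is the identity, which is $1$-Lipschitz, so no extra assumption is needed. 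Your aside that kernels Lipschitz in each argument suffice is inaccurate: that only gives H\"older-$1/2$ continuity of $\phi$, as your own Laplace example shows. This does not affect the proof.
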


\begin{proof}
Recall that for all $\mu,\nu \in \mathcal{P}(\mathcal{H})$ there exist $\mu_{\R^d},$ $\nu_{\R^d}\in \mathcal{P}(\R^d)$ such that we have
\begin{align*}
    \text{MMD}(\mu,\nu)=  \Big\| \int \phi(x) \,\mu_{\R^d}(\de x) -\int \phi(x)\,\nu_{\R^d}(\de x)\Big\|_{\mathcal{H}}.
\end{align*} 
From \cite[Proof of Lemma 5.4]{han2026max} it follows that that
\begin{align*}
    |\mathrm{MMD}(\mu,\nu) - \mathrm{MMD}(\tilde \mu, \tilde \nu)|\le \mathcal{W}(\mu, \tilde\mu) + \mathcal{W}(\nu, \tilde\nu)
\end{align*}
for all $\mu, \nu, \tilde{\mu}, \tilde{\nu} \in \mathcal{P}(\mathcal{H})$. 
Boundedness of $\phi$ implies boundedness of $\mathrm{MMD}$, so that
\begin{align*}
   \big| \text{MMD}^2(\pi_x, \pi_2) - \text{MMD}^2(\tilde \pi_{\tilde x}, \tilde \pi_2)\big| \le c [\mathcal{W}(\pi_x,\tilde \pi_{\tilde x})+ \mathcal{W}(\pi_2, \tilde \pi_2)]. 
\end{align*}
\end{proof}

\begin{proof}[Proof of Lemma \ref{lem_char_incRearr}]
    \ref{lem_char_incRearr1}: If \(\pi = \pi_1\otimes \pi_2\), then the induced copula is the independence copula, i.e. \(C_\pi = C^\perp\). Since \(C^\perp\) is SI, it follows that \(C_\pi^\uparrow = C^\perp\). Conversely, assume \(C_\pi^\uparrow = C^\perp\) and thus \(C_{\pi^\uparrow} = C^\perp\). Then, for \((X,Y)\sim \pi\) and \((X^\uparrow,Y^\uparrow)\sim \pi^\uparrow\), the conditional probabilities 
    \begin{align*}
        \pi_X((-\infty,y]) = F_{Y|X}(y) \eqd F_{Y^\uparrow|X^\uparrow}(y) = \pi^\uparrow_{X^\uparrow}((-\infty,y]) \stackrel{\eqref{eq:d1}}{=} \partial_1 C_{\pi^\uparrow}(F_{X^\uparrow}(x),F_{Y^\uparrow}(y)) = F_{Y^\uparrow}(y) = F_Y(y)
    \end{align*}
     are constant for all \(y\in \R\). Hence, \(X\) and \(Y\) are independent and thus \(\pi = \pi_1\otimes \pi_2\).\\
    \ref{lem_char_incRearr2}: For \((X,Y)\sim \pi\), \(\pi = (x,f(x))_\# \pi_1\) is equivalent to \(Y=f(X)\) \(\P\)-almost surely. The latter is equivalent to \(\partial_1 C_\pi(u,v) \in \{0,1\}\) for Lebesgue-almost all \((u,v)\in [0,1]^2\); see \cite[Proposition 3]{siburg2013}. This means for \((X^\uparrow,Y^\uparrow)\sim \pi^\uparrow\) that \(\partial_1 C_\pi(F_X(X),F_Y(y)) = \pi_X([0,y]) \eqd \pi_{X^\uparrow}^\uparrow([0,y]) = \partial_1 C_{\pi^\uparrow}(F_{X^\uparrow}(X^\uparrow),F_Y(y))\in \{0,1\}\) for all \(y\) \(\P\)-almost surely. The latter, however, is equivalent to the property that \(\partial_1 C_\pi^\uparrow(u,v)\in \{0,1\}\) for $\mathcal{U}\otimes\mathcal{U}$-a.e.~$(u,v)\in [0,1]^2$. The only SI copula that satisfies this property is the upper Fr\'{e}chet copula \(C^+\).\\
    \ref{lem_char_incRearr3}: Since \(C_\pi^\uparrow\) is SI, the statement follows from the following well-known identity for SI copulas (see e.g.~\cite[Figure 3.2]{Muller-2002}):
For \(C\in \cC^\uparrow\), we have \(C(u,v) = \int_0^u \partial_1 C(t,v) \de t \geq uv = C^\perp(u,v)\) using that \(x\mapsto \partial_1 C(x,v)\) is decreasing, and $\int_0^1 \partial_1 C(t, v)\,\de t=C(1,v)-C(0,v)=v$. The inequality \(C(u,v)\leq C^+(u,v)\) is true for every copula; see e.g.~\cite[2.10.12]{Nelsen-2006}.
\end{proof}

\end{document}